\DeclarePairedDelimiterX\Basics[1](){ #1}
\setlist[enumerate,1]{label={\roman*)}}
\newtheorem{theorem}{Theorem}
\newtheorem{corollary}{Corollary}[theorem]
\newtheorem{lemma}[theorem]{Lemma}
\newtheorem{remark}[theorem]{Remark}
\newtheorem{definition}[theorem]{Definition}
\newtheorem{proposition}[theorem]{Proposition}
\numberwithin{equation}{section}
\newtheorem{assumption}{Assumption}
\newcommand{\innermid}{\nonscript\;\delimsize\vert\nonscript\;}
\newcommand{\activatebar}{%
  \begingroup\lccode`\~=`\|
  \lowercase{\endgroup\let~}\innermid 
  \mathcode`|=\string"8000
}
\newlist{steps}{enumerate}{1}
\setlist[steps, 1]{label = Step \arabic*:}
\newcommand{\interior}[1]{%
  {\kern0pt#1}^{\mathrm{o}}%
}
\newcommand{\E}{\mathbb{E}}
\newcommand{\qi}{\underline{q}}
\newcommand{\qis}{\underline{q}}
\newcommand{\qiis}{\overline{q}}
\newcommand{\cI}{c}
\newcommand{\tsigma}{\widetilde{\sigma}}
\begin{document}
\title{Decision Making under Costly Sequential Information Acquisition:\\ the Paradigm of Reversible and Irreversible Decisions}

\author{Renyuan Xu
\thanks{Department of Management Science and Engineering, Stanford University, Stanford, USA. \textbf{Email:} renyuanxu@stanford.edu. R.X. is partially supported by the NSF CAREER Award DMS-2524465..}
\and
Thaleia Zariphopoulou \thanks{Departments of Mathematics and IROM, The University of Texas at Austin, Austin,
USA,  and the Oxford-Man Institute, University of Oxford, Oxford, UK. \textbf{Email:} zariphop@math.utexas.edu}
\and
Luhao Zhang\thanks{Department of Applied Mathematics and Statistics, Johns Hopkins University, Baltimore, USA. \textbf{Email:} luhao.zhang@jhu.edu}
\thanks{
This work was presented at the 2022 SIAM Annual Meeting, the 2022 INFORMS Annual Meeting, the Finance Department Seminar at the Boston University Questrom School of Business, and the Random System CDT Workshop at the University of Oxford, Women in Mathematical Finance 2023 (Rutgers University), the Workshop on Decision Making and Uncertainty at the Institute for Mathematical and Statistical Innovation (IMSI), Mathematical Finance Seminar at Columbia University, the 2024 Bachelier World Congress, and the First INFORMS Conference on Financial Engineering and FinTech. An earlier version of part of this work was first posted online on December 2023.
We extend our gratitude to the participants for their valuable comments and suggestions, especially Steve Kou, Alejandra Quintos, Christoph Reisinger, Moris Strub,  Hao Xing, and Xun Yu Zhou. 
The authors would also like to thank IMSI, University of Chicago, for its generous hospitality during the Spring 2022 and Spring 2023 long programs, during which most of this work was completed. 
}}

\date{December 31, 2025}
\maketitle

\begin{abstract}
Decision making in modern stochastic systems, including e-commerce platforms, financial markets and healthcare systems, has evolved into a multifaceted process that combines information acquisition and adaptive information sources. This paper initiates a study on such integrated settings, where these elements are not only fundamental but, also, interact in a complex and stochastically intertwined manner.

We introduce a relatively simple model, which, however, captures the involved novel elements. A decision maker (DM) may choose between an established product $A$ of known value and a new product $B$ whose value is unknown. In parallel, the DM observes signals about the unknown value of product $B$ and can, also, opt to exchange it for product $A$ if $B$ is initially chosen. Mathematically, the model gives rise to sequential optimal stopping problems with distinct informational regimes (before and after buying product $B$), differentiated by the initial, coarser signal and the subsequent, more accurate one. We analyze in detail the underlying problems using predominantly viscosity solution techniques, departing from the existing literature on information acquisition which is based on traditional optimal stopping arguments. 

More broadly, the modeling approach introduced herein offers a novel framework for developing more complex interactions among decisions, information sources and information costs in stochastic environments, through a sequence of nested obstacle problems.
\end{abstract}

\maketitle

\section{Introduction}
\label{sec:introduction}
The paper introduces a new modeling framework and initiates a study that integrates costly information acquisition, adaptive information sources, and sequential decision making. The motivation comes from numerous applications in which these elements not only play a fundamental role but also interact and influence each other in a rather complex, stochastically intertwined way. 

\paragraph{Motivation.} Given the recent surge in generative AI \cite{bandi2023power,brynjolfsson2023generative,fui2023generative}, our framework is particularly valuable for studying the adoption of new AI technologies and subscription to new data services when seeking possible improvements in workflows and decision-making processes \cite{feder1984acquisition,godoe2012understanding,heiman2020marketing,usai2021unveiling}. In particular,
with rapidly advancing but still immature AI technologies, institutions and firms interested in adopting these innovations may {\it not fully understand their potential and limitations} when integrated into existing systems. Therefore, it is essential for institutional decision-makers to thoroughly {\it collect  information and assess new technologies} before implementation, while ensuring the {\it flexibility} to \textit{revert} to previous systems if the new solutions do not enhance workflow efficiency or if significant incompatibilities emerge \cite{stenbacka1994strategic}. Moreover, it is equally important for AI technology developers to understand whether providing the option to revert to older technologies would encourage companies, corporations and institutions to be more open to experimenting with new innovations \cite{claudy2015consumer,kaur2020innovation}.

Such situations also frequently arise in today’s data-driven decision-making landscape. Companies in data-driven industries often face the challenge of choosing between standard and premium data services \cite{mariani2020exploring,pei2020survey}. A critical consideration is to {\it collect information and understand whether the marginal gains from premium data justify the additional costs}. For instance, Google may consider purchasing either standard user data or more granular user data from Reddit to train its algorithms for targeted advertising \cite{hu2021rise,li2021developers}. Before finalizing a purchase, Reddit could provide Google with a small sample of the granular data in batches. Based on the results, Google may then decide whether to proceed with the premium service or opt for the standard, familiar one. If the premium service does not meet expectations, {\it switching} to the standard service is, also, a viable option.

Other application domains requiring a multifaceted framework that integrates information acquisition, adaptive sources, and sequential decision-making include healthcare, supply chain management and e-commerce. In medical care, doctors continually adjust treatment plans for chronic conditions such as diabetes, guided by ongoing assessments of patient health and lifestyle changes \cite{yan2013home}. However, learning about each patient’s evolving health conditions requires effort and care, and informational sources do not remain static. In finance, investors and fund managers make sequential investment decisions influenced by evolving market trends, economic data, and realized returns while in parallel seeking information from, frequently, distinct sources \cite{han2013social,he2022endogenization,holland1998financial}. Supply chain managers adjust inventory and logistics strategies based on real-time sales trends and consumer demands, while they harvest information from newly coming sources \cite{fu2010endogenous,li2014transparency}. In retail, consumers routinely return products they use for a short amount of time while, in parallel, learning more about their quality as well as about the price and quality of comparable products \cite{huettner2019consumer,kakhbod2021heterogeneous}.  In the context of dynamic contract design,  the principal faces the challenge of dynamically monitoring labor activities to minimize adverse event occurrences \cite{chen2020optimal}.

The above examples and many more underscore the critical importance of information in making informed, adaptive decisions in contexts where complete information is not only unavailable, but its acquisition is costly and, furthermore, it changes regimes and accuracy while decisions are, in parallel, being intertemporally made. 

To our knowledge, the literature on models that \textit{combine} sequential decision making with dynamically changing costly information sources is not adequately developed. Indeed, the existing models predominantly consider both a single information source and a single decision occurring at an (optimal) time, at which the problem entirely terminates. Herein, we aim at considerably relaxing both these features by allowing, from the one hand, distinct costly information sources across time and, from the other, sequential decisions which have the flexibility to include reversal of previous ones. 

\paragraph{Modeling framework.} Herein, we build a general model primarily motivated by E-commerce, yet applicable to a variety of scenarios previously discussed. On E-commerce platforms, we often encounter situations where a new product of unknown quality is introduced to compete against existing ones of known quality. A buyer who is choosing between this new product and several well-known products could collect information on the former by reading reviews, and decide whether to purchase it or not. Meanwhile, she could also access information about the pricing of existing products on the platform. As most of the E-commerce platforms offer flexible return and exchange services (especially for new products), buyers could first purchase either the new product or a known, established one, after a preliminary investigation. If the buyer is unsatisfied with the initial purchase, she has the option to return the product and exchange it with the alternative (possibly with a return fee). 
Another example, that falls into this framework, is premium service subscriptions. Service providing entities, such as data services and news subscription companies, frequently introduce alternative plans as a strategy to expand their business operations and stimulate increased customer engagement. Existing clients are presented with an option to choose between these new services and their current subscriptions. Furthermore, there is an opportunity for these clients to revert to their original plan, albeit at a financial cost, after a trial period with the new plan. 

We introduce a relatively simple model, which however captures the novel elements we consider. In an infinite horizon setting, a decision maker (DM) may choose between two products, $A$ and $B$. Product $A$ is well established and its value is entirely known. Product $B$, on the other hand, has been only recently introduced, and its actual value is unknown. The latter is modeled as a binomial random variable $\Theta$. While the possible two values of product $B$ are known to the DM, she learns about $\Theta$ under a noisy signal $Y_t$ over a time period before making her initial decision, to buy $A$ or $B$. This noisy signal may be, for example, generated from posted reviews of product $B$ on an E-commerce platform. 

At the initial time, the DM is offered the optionality that if the DM chooses product $B$, she is offered the optionality to return it and switch to product $A$ at a later instant chosen by the user. This optionality is not however cost-free as the DM must pay a return fee. On the other hand during this trial period, she is now able to evaluate product $B$ more accurately through its use and, more broadly, through a new informational signal which is naturally more accurate than the initial one. This updated learning procedure continues either till the DM decides to return the product, pay the return fee, and terminate the problem or indefinitely, if the DM decides to keep product $B$. See Figure \ref{fig:flowchart} for the timeline of the decisions.
In both cases, learning occurs within a Bayesian framework, as in the classical filtering theory. 

\begin{figure}
    \centering
    \includegraphics[width=0.8\textwidth]{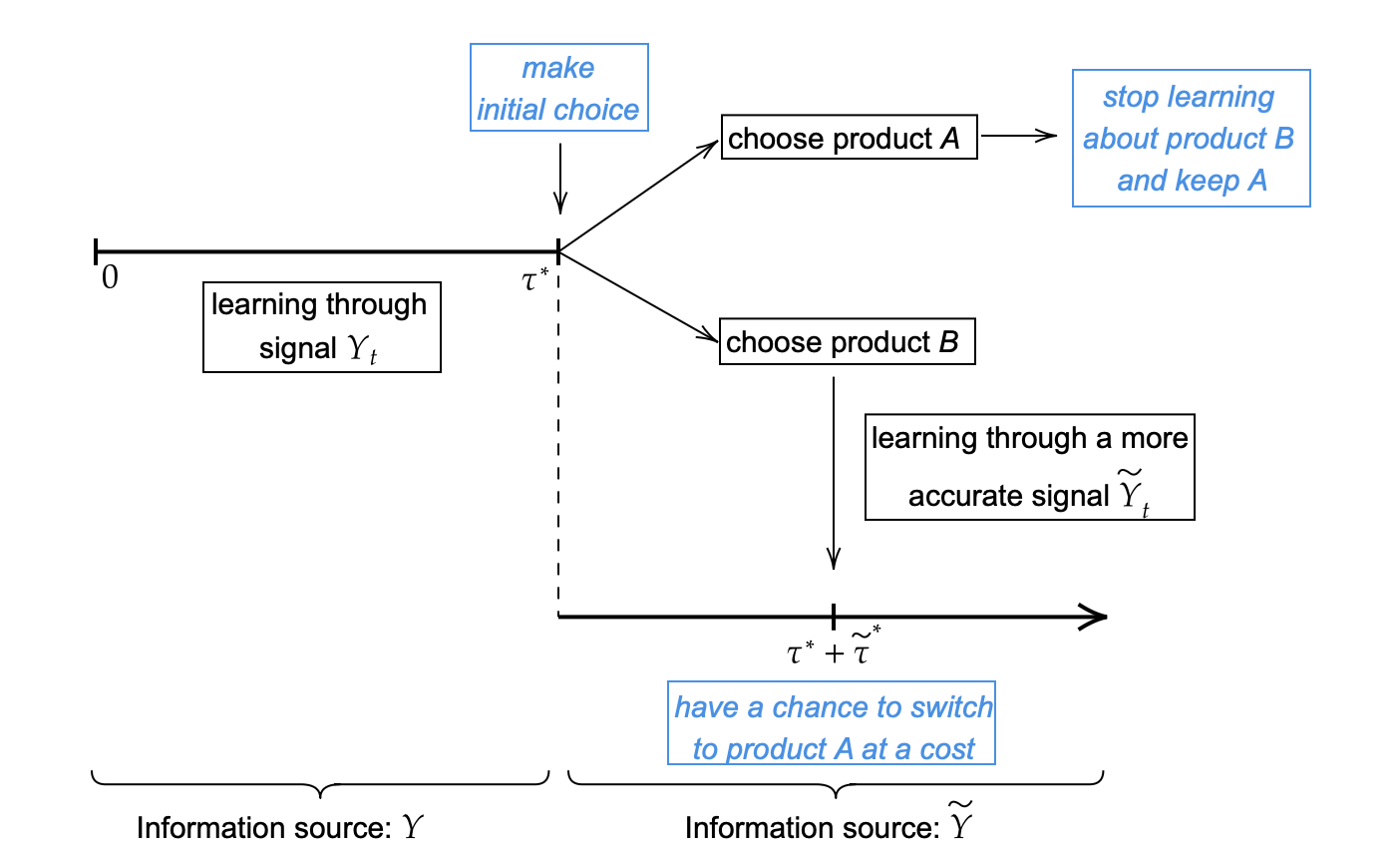}
    \caption{Timeline of the  decisions.} 
    \label{fig:flowchart}
\end{figure} 

Mathematically, the model gives rise to a sequential optimal stopping problem with two different informational regimes, differentiated by the initial, coarser signal and the subsequent, finer one.  More precisely, the obstacle term in the ``outer'' decision making problem is itself a value function of another, ``nested'', optimal stopping problem. Each of these two problems corresponds to distinct informational sources, costs, and utility flows and their analysis highlights the complex interactions between decisions, information sources, learning, and the associated payoffs and costs. 

\paragraph{Technical contribution.} We analyze the underlying problems primarily through viscosity solution techniques, as sufficient regularity is not always guaranteed. Furthermore, viscosity arguments allow for a unified approach to obtain existence, uniqueness, and comparison results. They are, also, flexible enough to perform sensitivity analysis and to study the limiting behavior of the solutions and the free boundaries, by building suitable viscosity sub- and super- solutions. 
Methodologically, our approach thus differs from traditional optimal stopping methods that rely on smooth fit or explicit free-boundary characterizations; see, for example, \cite{fudenberg2018speed,guo2001explicit,peskir2006optimal,van1976optimal}. Our viscosity techniques remain valid even when such conditions break down, offering a more flexible and generalizable analytical framework. In addition, their strong robustness and stability properties considerably facilitate the construction of numerical schemes when analytic solutions fail to exist.

Another key novelty of our work is the integration of reversible and irreversible decisions within a single optimal stopping framework, with a general, considerably more complex obstacle. Classical models typically treat irreversible decisions as stopping problems with fixed terminal conditions; see, for example, \cite{che2019optimal,fudenberg2018speed,moscarini2001optimal,zhong2017optimal}. Our formulation introduces a sequence of nested obstacle problems to capture interactions between sequential decisions, information sources, and acquisition costs, significantly extending standard models. The nature of this obstacle differs for each type of decision. The modeling approach herein offers a structural, both methodological and analytical, template to build new models and to analyze multi-stage decision-making with evolving costly information.

\paragraph{Related Literature.} Our work is related to two distinct lines of literature, one on decision making under \textit{costly information acquisition} and one on stochastic control with \textit{filtering}.

Decision making under information acquisition has a long history, dating back to the seminal paper \cite{wald1947foundations}, in which the flow of information is assumed to be fully exogenous. The DM only controls the decision time and action choice. Specifically, \cite{wald1947foundations} formulates an optimal stopping problem where the entire space of beliefs can be partitioned into a stopping region and a continuation region. Early works along this direction have focused on the duration of search when there exists a cost per unit of time when seeking information. For example, \cite{moscarini2001optimal} generalizes the framework in \cite{wald1947foundations} to an information intensity control problem where information is modeled as the trajectory of a Brownian motion with the drift representing the state, and the variance representing the intensity of information acquisition. Work \cite{miao2023dynamic} characterizes the solution to the dynamic rational inattention problem using a posterior-based approach, while \cite{ke2022parallel} tackles the problem of acquiring information on multiple alternatives simultaneously, characterizing a star-shaped optimal stopping boundary and the corresponding value function.

A similar setting is used in
\cite{fudenberg2018speed} to study the trade-off between information speed and precision. Some recent works have been trying to address the information selection issue when there are multiple information sources and when the DM has a limited capacity to process information. In this paradigm, \cite{che2019optimal,mayskaya2022dynamic} study the allocation of limited attention when there are multiple sources of information that are modeled by Poisson bandits. Work \cite{liang2017optimal} has a similar focus and assumes that the DM sequentially samples from a finite set of Gaussian signals and aims to predict a persistent multi-dimensional state at an unknown final period. The authors show that the optimal choice from Gaussian information sources is myopic. Recently, \cite{zhong2017optimal} studies a setting where the DM has access to both Gaussian signals (which are available in continuous time but are very noisy) and Poisson signals (which are less frequent but contain precise information). 
Another topical area of focus is partially observable Markov decision processes. 
 In this context, \cite{reisinger2022markov}  investigates the optimal times for acquiring costly observations and determining subsequent optimal action values. In addition, \cite{alizamir2022search} examines a scenario where the DM sequentially evaluates a stream of decision tasks to make optimal choices, exploring the impact of task accumulation and time pressure on her decisions.

However, despite the importance of addressing the search duration and information selection questions, decisions considered in these papers are either ``one shot'' (e.g., a static single decision between two products) or rather abstract (a general form of the terminal cost with no further analysis). As the  decision is an integral part of the framework and has a considerable impact on the learning process and information search behavior, it is crucial to discuss some more realistic, application oriented downstream decision making scenaria and examine how these tasks affect learning behavioral patterns. To the best of our knowledge, this aspect has been largely missing in the literature.

In the stochastic control literature with filtering, the DM has partial information about the underlying system, often modeled through a stochastic differential equation (SDE). The DM will first use the observation process to form an estimate of the state of the system and then, based on the separation principle \cite{shiryaev1973statistical}, construct the control signal as a function of this estimate. For this line of work, the observation process is often assumed to be obtained {\it at no cost}. The main focus, on the other hand, is on the construction of the filtering process and the solvability of the associated control problem  \cite{mitter1996filtering,sorenson1976overview}. Most of the studies have been focused on linear-quadratic problems \cite{anderson2007optimal,morris1976kalman,stengel1994optimal}, with a recent work \cite{knochenhauer2024continuous} incorporating costly information acquisition within this linear-quadratic model. There have, also, been recent works focusing on specific Bayesian problems, such as classification \cite{campbell2025bayesian} and sequential estimation \cite{campbell2025grab}, whereas our approach considers a more general value function. This is because a tractable finite dimensional Kalman-Bucy filter can be derived in explicit form when the underlying SDE is linear, and the associated control problem can be, in turn, solved through the Riccati system when the cost function is quadratic. Although sharing some common ingredients with our framework such as using the Bayesian formula to estimate unknown quantities, the partially observed quantities considered in this line of work are often more complex (i.e., unknown processes) than those considered in the literature on information acquisition (i.e., unknown variables). More importantly, the settings considered in stochastic control and filtering theory cannot be applied directly to the situation where the DM pays a cost to process information and to facilitate the understanding of how the cost of information affects the behavior of the DM. 

\paragraph{Our Contributions.} Our contribution is \textit{twofold}, modeling and methodological. We propose a general modeling approach that integrates sequential decision making, distinct informational sources, and information acquisition costs. To our knowledge, this is the first model with such features. It gives rise to a new kind of optimal stopping problems in which the obstacles themselves solve nested, distinct, optimal stopping problems. The obstacles take a general form which, in turn, requires a more involved analysis.

Methodologically, we develop a viscosity theory toolkit that allows us to study various aspects of the underlying optimal stopping problems. We also recover, as a special case, the well-known problem with a single (irreversible) decision which we solve with the complementary viscosity techniques. The viscosity theory allows us to perform a rather detailed analysis and study, among others, the effects of the return fee on both product-choice decisions between A and B, the behavior of the solution in terms of the volatility of the signals, the width of the exploratory regions in terms of the various modeling parameters, as well as various limiting cases. In particular, we find that i) the return optionality makes the DM spend less effort to explore the value of $\Theta$ during the first decision period, ii) if the exchange fee is comparatively small,  the DM makes her first choice earlier compared to the situation when the return cost is high, and iii) if the exchange cost is relatively high, the DM is more willing to choose the well-known product $A$ than choosing the new product $B$.

We finally note that the setting for the ``irreversible'' decisions part herein is akin to the set-up in \cite{moscarini2001optimal}, where the precision of the information signal can be further controlled at a cost, which is a special case in our general model. However, the mathematical tools used in our framework are markedly different. Work \cite{moscarini2001optimal} uses the smooth-fit principle to characterize the continuation and stopping regions under the assumption that the value function is twice continuously differentiable. In contrast, we construct proper viscosity sub- and super- solutions and utilize the comparison principle to identify the continuation and stopping regions, and to establish the asymptotic and monotonicity behavior with respect to the model parameters. Some of the constructions are by no means trivial and rather delicate, particularly in the sensitivity analysis part, due to interlinked dependence of model inputs. We find that our results complement the ones in \cite{moscarini2001optimal}, offering both broader insights and more technical details.

\medskip

The paper is organized as follows. In Section \ref{sec:framework}, we introduce the new extended model and study the underlying integrated optimal stopping problem. We present the main regularity results, the sensitivity analysis, and the limiting behavior of its solutions and of the free boundaries. In Section \ref{sec:single-decision}, we study the special case of irreversible decisions. In Section \ref{sec:reversible_decisions}, we analyze the general problem for two distinct kinds of information sources after the initial exploratory period. Namely, we study the cases of a Poisson and a Gaussian signal, respectively, and study various questions, among others, about the effects of the exchange fee on the optimal decisions of the DM. We conclude in Section \ref{sec:conclusion}.

\section{The general decision making model under costly sequential information }
\label{sec:framework}
A DM acts in an environment in which two products, $A$ and $%
B,$ are available. Product $A$ has a known value, modeled by a given
constant $\mu >0$, while product $B$ is new and not yet established. The quality of  product $B$
is not entirely known and is modeled by a random variable $\Theta,$ which may
take only two possible values, $l$ and $h$. The DM knows these values but
not the actual value of product $B$. For this, she formulates beliefs
using informational signals in a Bayesian framework. The signals, however,
are costly while they are being used. 

In all existing works, the DM\ starts at time $0,$ chooses a signal
process, and pays information acquisition costs while she learns about the new
product. At some time, say $\tau \geq 0,$ she chooses one of the two
products and the entire problem terminates. As mentioned in the Introduction, there is a rich literature on this
problem and we provide further comments in the sequel.

Herein, we depart from the known settings and propose a new, extended model
which allows for i)\ subsequent decision making beyond the initial (decision) time, ii) access to
sequentially differential information and iii)\ reversal of the initial
decision at a cost.

We describe the new model next, assuming for more simplicity that the initial time is $t=0$. 
Let $W = (W_t)_t$ be a  one-dimensional Brownian motion on a
complete probability space $(\Omega,\mathcal{F},\mathbb{F},\mathbb{P})$. 
 The DM observes a Gaussian signal process that follows 
\begin{equation}
dY_{t}=\,\Theta \,dt+\sigma dW_{t},\text{ \ }Y_{0}=0,  \label{Y-process}
\end{equation}%
with $\sigma >0$.
In turn, she
dynamically updates her views based on the information generated by $Y$,
acquiring the belief process $q$,
\begin{equation}
q_{t}:=\mathbb{P}\left[ \left. \Theta =h\right\vert \mathcal{F}_{t}^{Y}%
\right] ,  \label{belief-dfn}
\end{equation}%
with $\mathcal{F}_{t}^{Y}:=\sigma (Y_{s};0\leq s\leq t)$.  Let $\mathbb{F}^Y = (\mathcal{F}^Y_t)_t$ be the right-continuous extension
of the filtration generated by $Y$.

Classical results from filtering theory (see, for example, \cite{karatzas_zhao_2001}, also \cite{moscarini2001optimal,zhong2017optimal} and others)
yield that process $q$ is a martingale, solving 
\begin{equation}
dq_{t}=\frac{h-l}{\sigma }q_{t}\left( 1-q_{t}\right) dZ_{t},\text{ \ }%
q_{0}=q\in \left[ 0,1\right] ,  \label{belief-dynamics}
\end{equation}%
with $Z$ being a standard Brownian motion with respect to 
$\mathbb{F}^Y$. Clearly, the states $0$ and $1$ are absorbing, i.e. if the
initial belief $q_{0}=0,1$ then $q_{t}=0,1$, for $t>0$, respectively.

To have access to this signal process, the DM encounters \textit{information
acquisition cost}. It is assumed that they occur at a discounted rate $%
C(q)$ per unit of time, namely, the process (with a slight abuse of notation)
\begin{equation}
C_{t}:=\int_{0}^{t}e^{-\rho s}C(q_{s})ds,\text{ \ \ }\rho >0,
\label{information-cost}
\end{equation}%
is the cumulative cost of acquiring information in $\left[ 0,t\right] $
through signal $Y$. It is assumed that there are neither initial fixed costs
nor cost jumps thereafter. We comment on these features as well as on the sole dependence of the cost on $q$ in the last section.

\bigskip

\noindent\textbf{Examples}: i) \ $C(q)=C_I>0.$ The cumulative information cost is
deterministic with $C_{t}=\frac{C_I}{\rho }\left( 1-e^{-\rho t}\right) $. This constant cost case has been
extensively studied (see, \cite{keppo2008demand,keppo2022risky}), and we, also, study it herein
but in a more general context.

ii) $C\left( q\right) =\operatorname{Var}\left[ \left. \Theta \right\vert q\right] $ (or $%
C\left( q\right) =\sqrt{\operatorname{Var}\left[ \left. \Theta \right\vert q\right] }).$
The cumulative information cost is stochastic, $C_{t}=\int_{0}^{t}e^{-\rho
s}\operatorname{Var}\left[ \left. \Theta \right\vert q_{s}\right] ds$ (or $C_{t}=%
\int_{0}^{t}e^{-\rho s}\sqrt{\operatorname{Var}\left[ \left. \Theta \right\vert q_{s}\right]
}ds)$, and relates to the uncertainty in the DM's belief through time.\\

The value function of the DM, $V:\left[ 0,1\right] \rightarrow \mathbb{R},$
is defined as 
\begin{equation}
V(q)=\sup_{\tau \in \mathcal{T}^Y_0}\mathbb{E}\left. \left[ -\int_{0}^{\tau
}e^{-\rho t}C(q_{t})dt+e^{-\rho \tau }V_1(q_{\tau })\right\vert q_{0}=q%
\right] ,  \label{ValueFN-general}
\end{equation}%
where $\mathcal{T}^Y_t$ is the set of $\mathbb{F}^Y$-stopping times starting from time $t$:
\begin{equation}
\label{T-set}
    \mathcal T_t^Y \;:=\; \{\,\tau \ge t \;:\; \tau \text{ is an } \mathbb{F}^Y\text{- stopping time} \,\}.
\end{equation}

The function $V_1:\left[ 0,1\right] \rightarrow \mathbb{R}$ is modeled as  
\begin{equation}
V_1(q)=\max \Big(V_1^{A}( q),V_1^{B}(q)\Big),  \label{G-general}
\end{equation}%
where $V_1^{A}(q)$ and $V_1^{B}(q)$ represent the rewards the DM\ will receive 
{at times beyond the instant she makes her \textit{initial} choice. The
subscripts $A$ and $B$ correspond to the specific product \textit{initially} chosen by the DM. The DM evaluates the upcoming expected payoff $V_1$ and selects the greater of the two rewards, $V_1^A$ or $V_1^B$}.

Although the structure of \eqref{ValueFN-general}-\eqref{G-general} appears simple, note that $V_1^A$ and $V_1^B$ may represent the values of nested problems involving subsequent decisions, such as reversing the initial decision at a cost and possibly under a different information signal. Once the problems defining $V_1^A$ and $V_1^B$ are fully specified, solving the complete problem may still be nontrivial. In what follows, we present and study several cases for $V_1^A$ and $V_1^B$. A special case is introduced below in \eqref{eq:ir-v1}–\eqref{V-IR}, while a more general case is developed in Section~\ref{sec:nested}.

\paragraph{Special case: The existing single (irreversible) decision model.}

This is the only case that has been so far studied. There is a single decision and the problem terminates. The rewards are, in most works, given by   
\begin{equation}
V_1^{A}\left( q\right) =\mu \text{ \ \ and \ \ }V_1^{B}(q)=qh+(1-q)l,
\label{eq:ir-v1}
\end{equation}%
(see \cite{keppo2008demand,moscarini2001optimal,zhong2017optimal}). In other words, the payoff $V_1$ is the maximum of the
known return $\mu $ of product $A$ and the expected return $qh+(1-q)l$ of
product $B$ under belief $q$. The DM's value function takes the form
\begin{equation}
V(q)=\sup_{\tau \in {\mathcal{T}^Y_0}}\mathbb{E}\left. \left[ -\int_{0}^{\tau
}e^{-\rho t}C(q_{t})dt+e^{-\rho \tau }\max \Big( \mu ,q_{\tau
}h+(1-q_{\tau })l\Big) \right\vert q_{0}=q\right] .  \label{V-IR-generalG}
\end{equation}%
A popular case is when the information cost has a constant rate, $%
C(q)=C_{I}>0,$ $q\in \left[ 0,1\right],$ 

\begin{equation}
V(q):=\sup_{\tau \in {\mathcal{T}^Y_0}}\mathbb{E}\left. \left[ -\int_{0}^{\tau
}e^{-\rho t}C_{I}dt+e^{-\rho \tau }\max \Big( \mu ,q_{\tau }h+(1-q_{\tau
})l\Big) \right\vert q_{0}=q\right].  \label{V-IR}
\end{equation}
Some new properties of this single (irreversible) decision model, which is more general than those studied in the existing literature, are investigated in Section~\ref{sec:single-decision} and are of independent interest.

\subsection{The new general decision model with reversible decisions; nested optimal stopping problems}
\label{sec:nested}

We specify a general setting for $V^A$ and $V^B$, as introduced in \eqref{G-general}, as follows.
After the
initial decision (between $A$ or $B)$, the DM begins using the
chosen product, explores it and continues learning about it. For generality,
we assume that, even at this stage, complete information about the product
is not entirely accessible but, instead, a new informational signal is used which is, however, more accurate than the initial one. The DM\ has the optionality to return
the product at a cost and replace it with the other one, and then the problem terminates. Otherwise, she retains the product that was chosen initially. 

To keep the analysis tractable, we assume that the known product $A$ is not
returnable so the optionality to reverse the initial choice is available
only if the DM firstly chooses the unknown product $B.$ This is done only
for convenience as, mathematically, the analysis is the same, albeit more
involved. 

If the DM chooses product $B$ at $\tau^{\ast} $, she continues learning about it and updating the belief process via a \textit{new signal} process $ \widetilde{Y}_{t}$ $(t\ge \tau^*)$ on $(\Omega,\mathcal{F}_t,\mathbb{F},\mathbb{P})$, which is in analogy to (\ref{Y-process}). Here similar to \ref{Y-process},  $\widetilde Y_t$ is assumed to be a time homogeneous Markov process. The DM may decide to keep $B$ or exchange it for $A$, say at time $%
\tau^{\ast} +\widetilde{\tau}^{\ast}.$ If she returns it, she encounters a \textit{exchange fee%
} $R({q}_{{\tau}^{\ast}+\widetilde{\tau}^{\ast}})$ and the overall decision process
terminates. The functional form of fee $\widetilde R:{[0,1]\mapsto\mathbb{R}}$ is known to the DM at the initial time $0$. In general, $\widetilde{\tau}^*
\leq +\infty,$ thus allowing for the possibility of keeping product $B$ that was initially chosen $%
\left( \widetilde{\tau}^*=+\infty \right) $.  See Figure \ref{fig:reversible-B} for a demonstration.

In summary, \textit{if} she chooses product $B$ at time $\tau^{\ast}$, \textit{a new} optimization
problem is being generated on $\left[ \tau^{\ast} ,\infty \right) $ that
incorporates the optionality of exchange, the exchange fee and the
information acquisition under the new signal.  
The value function is given by
\begin{equation}
{\sup_{\widetilde{\tau}\in \widetilde{\mathcal{T}}_{\tau^*}}}\,%
\mathbb{{E}}\left[ \left. \int_{\tau^{\ast}}^{\tau^{\ast}+\widetilde{\tau }%
}e^{-\rho (t-\tau^{\ast})}\left( -\widetilde{C}\left( {q}_{t}\right)
+\widetilde m\left( {q}_{t}\right) )\right) dt+e^{-\rho \widetilde{\tau }
}\left( \mu -\widetilde R({q}_{\tau^{\ast}+\widetilde{\tau}})\right) \right| q_{\tau^\ast} 
\right] .  \label{U-general-pre}
\end{equation}
subject to 
\begin{eqnarray*}
    q_t=\mathbb{P}\Big[\Theta=h \,\,\Big|\,\,\widetilde {\mathcal{F}}_t\Big] =\mathbb{P}\Big[\Theta=h \,\,\Big|\,\,\sigma\big(\{\widetilde{Y}_s\}_{\tau^{\ast}\leq s\leq t}\big), q_{\tau^{\ast}}\Big] \quad \text{ for } \quad t \geq  \tau^{\ast},
\end{eqnarray*}
where $\widetilde{\mathcal{F}}_t =\sigma(\{Y_s\}_{s\leq  \tau^{\ast}},\{\widetilde{Y}_s\}_{ \tau^{\ast}\leq s\leq t})$ is the augmented information filtration for the DM. The second equality holds by the strong Markov property of the signal processes. Herein, $\widetilde{C}:[0,1]\mapsto \mathbb{R}_+$ denotes the information acquisition cost function for the refined signal  $\widetilde{Y}$ and $\widetilde{m}:[0,1]\mapsto \mathbb{R}_+$ denotes the payoff accumulated by the decision maker from using product  $B$ over the interval $\left[ \tau^{\ast}
,\tau^{\ast} +\widetilde{\tau}\right] $. Furthermore, in analogy to (\ref{T-set}), $
\widetilde{\mathcal{T}}_t$ is the set of stopping times adapted to $(\widetilde{\mathcal{F}}_s)_{s\ge t}$ for $t\ge \tau^\ast$.

By the strong Markov property and the time homogeneity of $\widetilde{Y}$,  for analytical convenience we shift the time index by $\tau^\ast$ and initialize the nested problem at time $0$. The resulting belief process $\widetilde{q}_t = \mathbb{P}[\Theta = h \mid \mathcal{F}^{\widetilde Y}_t]$ starts from $\widetilde{q}_0 = q_{\tau^\ast}$ (see \eqref{eq:belief-onlyPoisson} and \eqref{eq:belief-small-sigma}). Then, the value function in \eqref{U-general-pre} can be rewritten as
\begin{equation}
U^{B}(q_{\tau^{\ast}})\ :=\sup_{\widetilde{\tau}\in \mathcal{T}^{\widetilde{Y}}_0}\,%
\mathbb{E}\left[ \left. \int_{0}^{\widetilde{\tau }%
}e^{-\rho t}\left( -\widetilde{C}\left( \widetilde{q}_{t}\right)
+\widetilde m\left( \widetilde{q}_{t}\right) )\right) dt+e^{-\rho \widetilde{\tau }%
}\left( \mu -\widetilde R(\widetilde{q}_{\widetilde{\tau}})\right) \right\vert 
\widetilde{q}_{0}=q_{\tau^{\ast} }\right] ,  \label{U-general}
\end{equation}
where, similar to before, $\mathcal{T}^{\widetilde{Y}}_t$ contains all $\mathbb{F}^{\widetilde Y}$-stopping times starting from time $t$.
We will refer to this problem as the \textit{nested continuation problem for
product }$B$ and to $U^{B}$ as its \textit{nested continuation value
function}.

\begin{figure}[H]
    \centering    \includegraphics[width=0.8\textwidth]{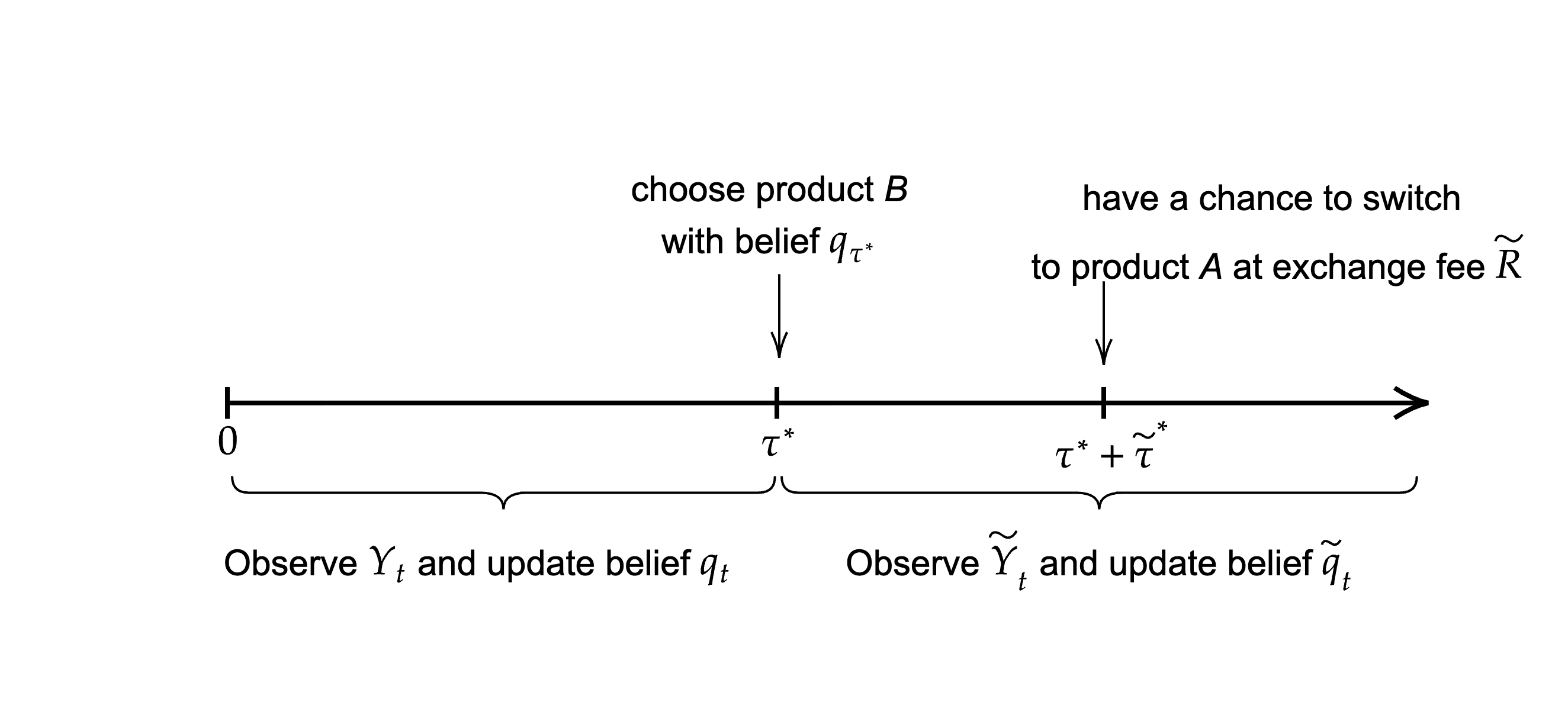}
    \caption{Timeline of the decisions when the initial choice is product $B$.} \label{fig:reversible-B}
\end{figure}

We are now ready to formulate the \textit{general, nested} model as follows. 
\begin{definition}[Nested Decision Making under Sequential Differential Information.]
\label{def:nested}
   The value function of the integrated model is defined as
\begin{equation}
V(q)=\sup_{\tau \in \mathcal{T}^{Y}_0}\mathbb{E}\left. \left[ -\int_{0}^{\tau
}e^{-\rho t}C(q_{t})dt+e^{-\rho \tau } V_1(q_\tau) \right\vert q_{0}=q\right],  \label{V}
\end{equation}%
with 
\begin{eqnarray}
    V_1(q):=\max(V_1^A(q),V_1^B(q)),
    \label{V1}
\end{eqnarray}
and
\begin{equation}
    V_1^A(q) = \mu \text{\ \  and \ \ } V_1^B(q) = U^B(q),
    \label{V1-general}
\end{equation}
where $U^B$ is defined in \eqref{U-general}. 
\end{definition}

The detailed mathematical foundations and associated stability results of the above framework are established in Section~\ref{sec:reversible_decisions}.

\subsection{General regularity results and sensitivity analysis}

Whether the DM chooses to work with the irreversible or the reversible
decision making problems, we will show that the mathematical analysis concerns optimal
stopping problems of the general form 
\begin{equation}
V(q)=\sup_{\tau \in \mathcal{T}^{Y}_0}\mathbb{E}\left. \left[ -\int_{0}^{\tau
}e^{-\rho t}C(q_{t})dt+e^{-\rho \tau }{G(q}_{\tau }{)}\right\vert q_{0}=q%
\right] ,\text{ \ }q\in \left[ 0,1\right] ,  \label{V-G}
\end{equation}
for \textit{general} information costs and payoff functions $C(.)$ and $G(.)
$,
with the belief process $q$ solving (cf. (\ref%
{belief-dynamics})) {up to time $\tau$},%
\begin{equation*}
dq_{t}=\frac{h-l}{\sigma }q_{t}\left( 1-q_{t}\right) dZ_{t},\text{ \ }%
q_{0}=q\in \left[ 0,1\right],
\end{equation*}%
 As mentioned earlier, only the specific case $(C(q),G(q))=(C_{I}$, ${\max (\mu
,qh+(1-q)l))}$ has been so far analyzed (see \cite{keppo2008demand}).

For times beyond $\tau$, similar optimal stopping problems arise, as \eqref{U-general} demonstrates. They differ only in the dynamics of the belief process in the new information regimes (see Poisson \eqref{eq:belief-onlyPoisson} and Gaussian \eqref{eq:belief-small-sigma}) and in their payoffs. However, their analysis is carried out using the detailed arguments we provide next for problem \eqref{V-G}, allowing for \textit{general} pairs $(C(.), G(.))$ of information costs and payoff functions, respectively. To this end, we first introduce some rather mild conditions.

\begin{assumption}
\label{ass:CI}
\begin{enumerate}
    \item[i)] The information cost $C:[0,1]\rightarrow \mathbb{R%
}_{+}$ is Lipschitz {with Lipschitz constant $L>0$, and for some }$%
C_{1},C_{2},$ $0<C_{1}\le C(q) \le C_{2},$ $q\in \left[ 0,1\right] .$
    \item [ii)] The payoff function $G:[0,1]\rightarrow \mathbb{R}_{+}$ is Lipschitz {%
with Lipschitz constant $K>0$, convex and non-decreasing.}
    \item [iii)] The model parameters satisfy
    \begin{equation}
    0<l<\mu <h\text{ \ \ and \  \ }\sigma >0.  \label{parameters}
    \end{equation}
\end{enumerate}
\end{assumption}

From classical results in optimal stopping, 
$V\ $is expected to satisfy
the variational inequality%
\begin{equation}
({\rm OP})\text{ }\left\{ 
\begin{array}{c}
{\min }\left( \rho V(q)-\frac{1}{2}\left( \frac{h-l}{\sigma }\right)
^{2}q^{2}(1-q)^{2}V^{\prime \prime }\left( q\right)
+C(q),\,\,V(q)-G(q)\right) =0,\text{ \ \ }q\in \left[ 0,1\right] , \\ 
\\ 
V\left( 0\right) =G\left( 0\right) \text{ \ \ and \ \ }V\left( 1\right)
=G\left( 1\right);%
\end{array}%
\right.   \label{OP}
\end{equation}%
see, for example \cite{pham2009continuous,reikvam1998viscosity}. For the boundary conditions, we recall that $q=0$ is an absorbing state
and, thus, (\ref{V-G})\ gives

\begin{equation*}
V(0)=\sup_{\tau \in \mathcal{T}^{Y}_0}\mathbb{E}\left. \left[ -\int_{0}^{\tau
}e^{-\rho t}C(0)dt+e^{-\rho \tau }{G(0)}\right\vert q_{0}=0\right] .
\end{equation*}%
Using that $C(q)>0$ and $\rho >0,$ we deduce that the optimal time must be $%
\tau^{\ast} =0$ and $V(0) =G(0)$ follows. Similar
arguments yield that $V(1)=G(1)$.

For the reader's convenience, we highlight below the key steps for the
derivation of (\ref{OP}). If $q\in \left( 0,1\right),$ there are two
admissible, in general suboptimal, policies: i) the DM may immediately choose
product $A$ or $B,$ without seeking any information about the latter or ii)\
she may dedicate some time, say $\left( 0,\varepsilon \right] $ with $%
\varepsilon $ small, learning about the unknown product $B$ before deciding which product to
choose. Choices (i) and (ii) give, respectively, 
\begin{equation}
V(q)\geq G(q) \text{ \ \ and  \ \ } 
V(q)\geq \mathbb{E}\left[ \left. -\int_{0}^{\varepsilon }e^{-\rho
t}C(q_{t})dt+e^{-\rho \varepsilon }V(q_{\varepsilon })\right\vert q_{0}=q%
\right].\label{Inequality1}
\end{equation}%
Assuming that $V$ is smooth enough, It\^{o}'s formula gives, for $t \in (0,\varepsilon)$, 
\begin{equation*}
V(q_{\varepsilon })=V(q)+\int_{0}^{\varepsilon }\frac{1}{2}\left( \frac{h-l}{%
\sigma }\right) ^{2}q_{t}^{2}(1-q_{t})^{2}V^{\prime \prime }\left(
q_{t}\right) dt+\int_{0}^{\varepsilon }\sigma V^{\prime }\left( q_{t}\right)
dZ_{t},
\end{equation*}%
where we used (\ref{belief-dynamics}). Dividing by $\varepsilon $ and passing
to the limit $\varepsilon \downarrow 0,$ we deduce that 
\begin{equation}
\rho V(q)-\frac{1}{2}\left( \frac{h-l}{\sigma }\right)
^{2}q^{2}(1-q)^{2}V^{\prime \prime }\left( q\right) +C(q)\geq 0.
\label{Inequality2}
\end{equation}%
Because one of these two choices must be optimal, (\ref{Inequality1}) or (%
\ref{Inequality2}) must hold as equality and \eqref{OP} follows.

\begin{lemma}\label{lemma:continuity}
    There exists a constant $\rho_0>0$ such that for each $\rho \ge \rho_0$, the value function $V$ is Lipschitz continuous on $%
\left[ 0,1\right] $.
\end{lemma}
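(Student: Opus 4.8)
The value function is defined as a supremum over stopping times of a functional that is linear in $G(q_\tau)$ and involves the running cost. The natural strategy is to bound $|V(q_1)-V(q_2)|$ by comparing the performance of near-optimal policies for the two initial beliefs, using a coupling of the belief processes. Concretely, fix $q_1,q_2\in[0,1]$ and let $q^1,q^2$ denote the solutions of \eqref{belief-dynamics} started from $q_1$ and $q_2$ respectively, driven by the \emph{same} Brownian motion $Z$. Because the diffusion coefficient $q\mapsto \frac{h-l}{\sigma}q(1-q)$ is Lipschitz on $[0,1]$ (it vanishes at the endpoints, and $[0,1]$ is invariant), standard SDE stability estimates give, for each $t$,
\begin{equation*}
\mathbb{E}\big[\,|q^1_t-q^2_t|\,\big]\le e^{\kappa t}\,|q_1-q_2|,
\end{equation*}
for a constant $\kappa$ depending only on $h-l$ and $\sigma$. (One gets this from $\mathbb{E}|q^1_t-q^2_t|^2\le e^{2\kappa t}|q_1-q_2|^2$ via Gronwall on the Itô expansion of $|q^1-q^2|^2$, then Jensen; the drift is zero here so only the diffusion term contributes.)

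\textbf{The comparison step.} Take an arbitrary $\tau\in\mathcal{T}_Y$; it is admissible for both problems. Using this same $\tau$ in the functional for both $q_1$ and $q_2$ and subtracting,
\begin{equation*}
\Big|\mathbb{E}\big[\cdots\mid q_0=q_1\big]-\mathbb{E}\big[\cdots\mid q_0=q_2\big]\Big|
\le \mathbb{E}\Big[\int_0^\tau e^{-\rho t}\big|C(q^1_t)-C(q^2_t)\big|\,dt\Big]
+\mathbb{E}\Big[e^{-\rho\tau}\big|G(q^1_\tau)-G(q^2_\tau)\big|\Big].
\end{equation*}
By the Lipschitz bounds on $C$ (constant $L$) and $G$ (constant $K$) from Assumption \ref{ass:CI}, this is at most
\begin{equation*}
L\,\mathbb{E}\Big[\int_0^\infty e^{-\rho t}|q^1_t-q^2_t|\,dt\Big]
+K\,\sup_{t\ge0}\Big(e^{-\rho t}\,e^{\kappa t}\Big)\,|q_1-q_2|,
\end{equation*}
and substituting the stability estimate, the first term is bounded by $L|q_1-q_2|\int_0^\infty e^{-(\rho-\kappa)t}\,dt=\frac{L}{\rho-\kappa}|q_1-q_2|$, provided $\rho>\kappa$. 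The second term is finite and of order $|q_1-q_2|$ precisely when $\rho\ge\kappa$. Taking $\bar\rho:=\kappa$ (or any value exceeding it) and then taking the supremum over $\tau$ on both sides — which is legitimate since the bound is uniform in $\tau$ — yields
\begin{equation*}
|V(q_1)-V(q_2)|\le \Big(\tfrac{L}{\rho-\kappa}+K\Big)|q_1-q_2|,
\end{equation*}
i.e.\ Lipschitz continuity on $[0,1]$ for $\rho\ge\bar\rho$.

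\textbf{Where the difficulty lies.} The routine parts are the SDE estimate and the Lipschitz bookkeeping; the one genuine subtlety is that the plain $L^2$ Gronwall estimate produces the exponential factor $e^{\kappa t}$, and to kill it against the discount $e^{-\rho t}$ one must impose $\rho$ large — this is exactly why the lemma is stated only for $\rho\ge\bar\rho$ rather than for all $\rho>0$. I would be careful to verify that the constant $\kappa$ can indeed be taken to depend only on $h,l,\sigma$ and not on the (possibly large) supremum of $V$; since the argument never invokes $V$ itself, only the data $C$ and $G$, this is fine. A secondary point worth a sentence is that one could sharpen $\kappa$: the diffusion coefficient $\phi(q)=\frac{h-l}{\sigma}q(1-q)$ satisfies $|\phi(x)-\phi(y)|\le \frac{h-l}{\sigma}|x-y|$, and applying Itô to $|q^1_t-q^2_t|^2$ gives $\kappa=(h-l)^2/\sigma^2$ as an explicit admissible choice, so $\bar\rho=(h-l)^2/\sigma^2$ works; but since the statement only asks for existence of such a $\bar\rho$, I would keep the exposition at that level and not optimize further.
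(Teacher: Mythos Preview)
Your overall strategy is exactly the paper's: couple the two belief processes via the same driving Brownian motion, invoke the Lipschitz SDE stability estimate $\mathbb{E}[|q^1_t-q^2_t|]\le e^{\kappa t}|q_1-q_2|$, and then use the Lipschitz constants of $C$ and $G$ together with the discount factor to absorb the exponential growth. The running-cost term is handled correctly.

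There is, however, a genuine gap in your treatment of the terminal term. From $\mathbb{E}[e^{-\rho\tau}|q^1_\tau-q^2_\tau|]$ you jump to $K\sup_{t\ge 0}(e^{-\rho t}e^{\kappa t})|q_1-q_2|$, but the stability bound you quoted is only pointwise in deterministic $t$; it does \emph{not} let you evaluate at a random $\tau$. Concretely, $\mathbb{E}[|q^1_t-q^2_t|]\le e^{\kappa t}|q_1-q_2|$ for each fixed $t$ does not imply $\mathbb{E}[e^{-\rho\tau}|q^1_\tau-q^2_\tau|]\le \sup_t e^{-(\rho-\kappa)t}|q_1-q_2|$. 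The paper closes this by first establishing the stronger uniform-in-time estimate
\[
\mathbb{E}\Big[\sup_{t\ge 0}e^{-\rho t}|q^{q_1}_t-q^{q_2}_t|\Big]\le |q_1-q_2|\qquad\text{for }\rho>2\rho_0,
\]
which follows from the running-sup bound $\mathbb{E}[\sup_{0\le u\le t}|q^{q_1}_u-q^{q_2}_u|]\le e^{\rho_0 t}|q_1-q_2|$ (cf.\ \cite[Theorem 1.3.16]{pham2009continuous}); once the $\sup_t$ is inside the expectation, the bound $e^{-\rho\tau}|q^1_\tau-q^2_\tau|\le \sup_{t\ge 0}e^{-\rho t}|q^1_t-q^2_t|$ holds pathwise and the estimate is immediate. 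An alternative repair, if you prefer to stay with your $L^2$ computation: the It\^o expansion shows $e^{-2\kappa t}(q^1_t-q^2_t)^2$ is a bounded supermartingale for $\kappa=\tfrac12\big(\tfrac{h-l}{\sigma}\big)^2$, so optional stopping plus Cauchy--Schwarz gives $\mathbb{E}[e^{-\rho\tau}|q^1_\tau-q^2_\tau|]\le|q_1-q_2|$ for $\rho\ge\kappa$. Either route works; the point is that some argument handling the randomness of $\tau$ is required and is currently missing.
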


\begin{proof}
    We show that there exists a positive constant $\rho
_{0}=\rho _{0}(${$h,l,\sigma )$} such that, for $\rho >2\rho _{0}$, 
\begin{equation*}
|V(q_{1})-V(q_{2})|\leq {\left( \frac{L}{\rho _{0}}+K\right) }|q_{1}-q_{2}|,%
\text{ \ }q_{1},q_{2}\in \lbrack 0,1].
\end{equation*}%
First note that, for $q_{1},q_{2}\in \left[ 0,1\right] $, the function $%
b(q):=\frac{h-l}{\sigma }q(1-q)$ satisfies $b(q)\leq \frac{h-l}{\sigma }q$
and, furthermore,
\begin{equation*}
|b(q_{1})-b(q_{2})|\leq \frac{h-l}{\sigma }((1+q_{1}+q_{2})|q_{1}-q_{2}|)%
\leq 3\frac{h-l}{\sigma }|q_{1}-q_{2}|.
\end{equation*}%
Therefore, (see, for example, \cite[Theorem 1.3.16]{pham2009continuous}),
there exists $\rho _{0}=\rho _{0}(${$h,l,\sigma )>0$} such
that 
\begin{equation*}
\mathbb{E}\left[ \sup_{0\leq u\leq t}|q_{u}^{q_{1}}-q_{u}^{q_{2}}|\right]
\leq e^{\rho _{0}t}|q_{1}-q_{2}|,
\end{equation*}%
{where $q_u^{q_i}$ solves \eqref{belief-dynamics} with initial state $q_i$, $i=1,2$.}
For {$\rho >2\rho _{0}$}, we can, similarly, prove that $\mathbb{E}\left[
\sup_{t\geq 0}e^{-\rho t}|q_{t}^{q_{1}}-q_{t}^{q_{2}}|\right] \leq
|q_{1}-q_{2}|$. In turn, the Lipschitz properties of $C(.)$ and $G(.)$ (see 
Assumption \ref{ass:CI}) yield 
\begin{eqnarray*}
|V(q_{1})-V(q_{2})| &\leq &{\ \sup_{\tau \in \mathcal{T}^{Y}_0}}\,\,\mathbb{E}%
\left[ \int_{0}^{\tau }e^{-\rho t}\left\vert
C(q_{t}^{q_{1}})-C(q_{t}^{q_{2}})\right\vert dt+e^{-\rho \tau }\left\vert
G(q_{\tau }^{q_{1}})-G(q_{\tau }^{q_{2}})\right\vert \right]  \\
&\leq &{\ L}\mathbb{E}\left[ \int_{0}^{\infty }e^{-\rho t}\left\vert
q_{t}^{q_{1}}-q_{t}^{q_{2}}\right\vert dt\right] +{\ K}\mathbb{E}\left[
\sup_{t\geq 0}e^{-\rho t}\left\vert q_{t}^{q_{1}}-q_{t}^{q_{2}}\right\vert %
\right]  \\
&\leq &{\ \left( \frac{L}{\rho -\rho _{0}}+K\right) }|q_{1}-q_{2}|.
\end{eqnarray*}
\end{proof}

The connection between the value function and viscosity solutions of optimal
stopping problems was established in \cite{reikvam1998viscosity} (see, also, 
{ \cite{pham2009continuous}}). Throughout, we will be using viscosity arguments to carry
out an extensive analysis of the problem; for completeness, we also
highlight the key steps in the following characterization result.

\begin{theorem}
    The value function $V:[0,1]\to\mathbb R_+$ (cf. \ref{V-G}) is a
viscosity solution to (\ref{OP}), unique in the class of Lipschitz
continuous functions.
\end{theorem}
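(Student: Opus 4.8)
The plan is to prove the two assertions separately: that $V$ is a viscosity solution of \eqref{OP} (existence), and that it is the only Lipschitz one (uniqueness, via a comparison principle). For the viscosity solution property I would start from the dynamic programming principle: for every $q\in(0,1)$ and every $\theta\in\mathcal{T}_Y$,
\[
V(q)=\sup_{\tau\in\mathcal{T}_Y}\mathbb{E}\Big[-\int_0^{\tau\wedge\theta}e^{-\rho t}C(q_t)\,dt+\mathbbm{1}_{\{\tau<\theta\}}e^{-\rho\tau}G(q_\tau)+\mathbbm{1}_{\{\tau\ge\theta\}}e^{-\rho\theta}V(q_\theta)\,\Big|\,q_0=q\Big],
\]
which holds by standard arguments for optimal stopping (see \cite{pham2009continuous,reikvam1998viscosity}); note that $V$ is continuous, and in fact Lipschitz when $\rho\ge\bar\rho$ by the preceding lemma, so it lies in the uniqueness class. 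The obstacle inequality $V\ge G$ on $[0,1]$ is the admissible choice $\tau=0$, i.e.\ \eqref{Inequality1}. To check the supersolution property at an interior $q_0$ where a smooth $\phi$ touches $V$ from below, take $\theta=\tau_\varepsilon\wedge\varepsilon$ with $\tau_\varepsilon$ the exit time of $q$ from a small neighbourhood of $q_0$, use $\tau=\theta$ in the DPP, apply It\^o's formula to $t\mapsto e^{-\rho t}\phi(q_t)$, divide by $\varepsilon$ and let $\varepsilon\downarrow0$; this yields $\rho\phi(q_0)-\tfrac12\big(\tfrac{h-l}{\sigma}\big)^2q_0^2(1-q_0)^2\phi''(q_0)+C(q_0)\ge0$, and with $V(q_0)\ge G(q_0)$ gives $\min(\cdot,\cdot)\ge0$. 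For the subsolution property at $q_0$ where a smooth $\phi$ touches $V$ from above: if $V(q_0)=G(q_0)$ there is nothing to check; otherwise $V(q_0)>G(q_0)$, so by the DPP and continuity continuing strictly beats stopping near $q_0$, and the same It\^o expansion forces $\rho\phi(q_0)-\tfrac12\big(\tfrac{h-l}{\sigma}\big)^2q_0^2(1-q_0)^2\phi''(q_0)+C(q_0)\le0$. The boundary conditions $V(0)=G(0)$ and $V(1)=G(1)$ are precisely the ones already derived before the statement, using $C>0$, $\rho>0$ and the absorption of $q$ at $0,1$.

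For uniqueness it suffices to prove a comparison principle: if $u$ is a bounded Lipschitz viscosity subsolution and $v$ a bounded Lipschitz viscosity supersolution of \eqref{OP} with $u(0)\le v(0)$ and $u(1)\le v(1)$, then $u\le v$ on $[0,1]$; since any two Lipschitz solutions share the boundary data $G(0),G(1)$, applying this both ways gives equality, so $V$ (Lipschitz by the lemma) is the unique such solution. I would run the classical doubling of variables. Suppose for contradiction that $M:=\max_{[0,1]}(u-v)>0$; the boundary inequalities force the maximum to be attained at an interior $q^\ast$. For $\varepsilon>0$ let $(q_\varepsilon,p_\varepsilon)$ maximise $\Phi_\varepsilon(q,p):=u(q)-v(p)-\tfrac{1}{2\varepsilon}|q-p|^2$ over $[0,1]^2$; using the Lipschitz bound on $u$ one gets $\tfrac{1}{\varepsilon}|q_\varepsilon-p_\varepsilon|^2\to0$, $q_\varepsilon,p_\varepsilon\to q^\ast$ and $u(q_\varepsilon)-v(p_\varepsilon)\to M$, so both points are interior for $\varepsilon$ small. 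The Crandall--Ishii lemma then produces reals $X\le Y$ with $\big(\tfrac{q_\varepsilon-p_\varepsilon}{\varepsilon},X\big)\in\bar J^{2,+}u(q_\varepsilon)$ and $\big(\tfrac{q_\varepsilon-p_\varepsilon}{\varepsilon},Y\big)\in\bar J^{2,-}v(p_\varepsilon)$, together with the estimate $a(q_\varepsilon)X-a(p_\varepsilon)Y\le\tfrac{3}{2\varepsilon}\big(b(q_\varepsilon)-b(p_\varepsilon)\big)^2$, where $a(q):=\tfrac12\big(\tfrac{h-l}{\sigma}\big)^2q^2(1-q)^2$ and $b(q):=\tfrac{h-l}{\sigma}q(1-q)=\sqrt{2a(q)}$; since $b$ is Lipschitz on $[0,1]$ (as already observed in the proof of the preceding lemma), the right-hand side is bounded by a constant times $\varepsilon^{-1}|q_\varepsilon-p_\varepsilon|^2$ and hence tends to $0$ (the first-order slots of the jets play no role, as the operator in \eqref{OP} has no drift term). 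Finally invoke the viscosity inequalities: if $u(q_\varepsilon)-G(q_\varepsilon)\le0$, then, using $v(p_\varepsilon)\ge G(p_\varepsilon)$, we get $u(q_\varepsilon)-v(p_\varepsilon)\le G(q_\varepsilon)-G(p_\varepsilon)\le K|q_\varepsilon-p_\varepsilon|\to0$, contradicting $u(q_\varepsilon)-v(p_\varepsilon)\to M>0$; otherwise $\rho u(q_\varepsilon)-a(q_\varepsilon)X+C(q_\varepsilon)\le0$ and $\rho v(p_\varepsilon)-a(p_\varepsilon)Y+C(p_\varepsilon)\ge0$, so subtracting, $\rho\big(u(q_\varepsilon)-v(p_\varepsilon)\big)\le\big(a(q_\varepsilon)X-a(p_\varepsilon)Y\big)+\big(C(p_\varepsilon)-C(q_\varepsilon)\big)\le o(1)+L|q_\varepsilon-p_\varepsilon|$, and letting $\varepsilon\downarrow0$ gives $\rho M\le0$, contradicting $\rho>0$.

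The hard part is the comparison step, and within it the control of the second-order contribution $a(q_\varepsilon)X-a(p_\varepsilon)Y$: because $a$ degenerates at $q=0,1$ there is no uniform ellipticity, and the argument closes only because $\sqrt a=b$ is globally Lipschitz, which converts the $O(\varepsilon^{-1})$ bound from the theorem on sums into a term of size $\varepsilon^{-1}|q_\varepsilon-p_\varepsilon|^2\to0$. The obstacle structure contributes only the elementary case split, dispatched by the Lipschitz continuity of $G$; the existence part is comparatively routine once the dynamic programming principle is in hand, and the boundary behaviour has already been settled before the statement.
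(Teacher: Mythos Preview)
Your proposal is correct. The existence argument follows the same scheme as the paper's (DPP plus It\^o), though the paper makes the subsolution contradiction fully explicit: assuming both terms of the $\min$ are strictly positive at the test point, it builds an auxiliary quadratic $v(q)=c_0\big(1-\delta^{-2}(q-\bar q)^2\big)$ to obtain the uniform lower bound $\mathbb{E}\big[\int_0^{\hat\tau\wedge\tau}e^{-\rho t}\,dt+e^{-\rho\tau}\mathbf{1}_{\tau<\hat\tau}\big]\ge c_0>0$, which is the missing quantitative step behind your phrase ``continuing strictly beats stopping near $q_0$''. Your supersolution sketch and treatment of the boundary data match the paper's.

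Where you genuinely go beyond the paper is uniqueness: the paper simply cites \cite[Theorem~3.3]{crandall1992user}, whereas you supply the full doubling-of-variables comparison. Your argument is sound, and you correctly isolate the crux, namely that the degeneracy of $a(q)=\tfrac12\big(\tfrac{h-l}{\sigma}\big)^2q^2(1-q)^2$ at the endpoints is harmless because $\sqrt{2a}=b$ is globally Lipschitz on $[0,1]$, so the Ishii--Crandall matrix inequality yields $a(q_\varepsilon)X-a(p_\varepsilon)Y\le\tfrac{3}{2\varepsilon}(b(q_\varepsilon)-b(p_\varepsilon))^2=O(\varepsilon^{-1}|q_\varepsilon-p_\varepsilon|^2)\to0$. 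The obstacle case-split using $v\ge G$ and the Lipschitz bound on $G$ is handled cleanly. The upshot is that your write-up is self-contained where the paper defers to a reference, at the cost of sketching the subsolution step that the paper spells out.
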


\begin{proof}
    We first establish that $V$ is a viscosity subsolution of (%
\ref{OP}) in {$(0,1).$} For this, let $\bar{q}\in \left( 0,1\right) $ and
consider a test function $\varphi \in \mathcal{C}^{2}((0,1))$ such that $%
(V-\varphi )(\bar{q})=\max_{q\in \left( 0,1\right) }(V-\varphi )$ and $%
(V-\varphi )(\bar{q})=0.$ We need to show that 
\begin{equation*}
\min \left( \rho \varphi \left( \bar{q}\right) -\frac{1}{2}\left( \frac{h-l}{%
\sigma }\right) ^{2}\bar{q}^{2}(1-\bar{q})^{2}\varphi ^{\prime \prime
}\left( \bar{q}\right) +C(\bar{q}),V(\bar{q})-G(\bar{q})\right) \leq 0.
\end{equation*}%
We argue by contradiction, assuming that both inequalities, 
\begin{equation}
\rho \varphi \left( \bar{q}\right) -\frac{1}{2}\left( \frac{h-l}{\sigma }%
\right) ^{2}\bar{q}^{2}(1-\bar{q})^{2}\varphi ^{\prime \prime }\left( \bar{q}%
\right) +C(\bar{q})>0\text{ \ and \ }V(\bar{q})-G(\bar{q})>0,
\label{Contradiction}
\end{equation}%
hold. Using the continuity of the involved functions, we deduce that there would then
exist $\delta >0$ such that, for  $0\leq t\leq {\ \hat{\tau}},$ a.s., 
\begin{equation}
\rho \varphi \left( \bar{q}_{t}\right) -\frac{1}{2}\left( \frac{h-l}{\sigma }%
\right) ^{2}\bar{q}_{t}^{2}(1-\bar{q}_{t})^{2}\varphi ^{\prime \prime
}\left( \bar{q}_{t}\right) +C(\bar{q}_{t})\geq \delta \text{ and }V\left( 
\bar{q}_{t}\right) -G(\bar{q}_{t})\geq \delta ,  \label{delta-inequalities}
\end{equation}%
where {$\bar{q}_{t}:=q_{t}^{\bar{q}}{\ }$ solves \eqref{belief-dynamics} with initial state $\bar{q}$}, and ${\hat{\tau}}$ is the exit
time of $\bar{q}_{t}$ from {$[\bar{q}-\delta ,\bar{q}+\delta ]$}. 
Applying It\^{o}'s formula to $e^{-\rho t}\varphi (\bar{q}_{t})$, $t\in %
\left[ 0,{\hat{\tau}}\wedge \tau \right] ,$ $\tau \in \mathcal{T}^{Y}_0,$
yields 
\begin{eqnarray}
V(\bar{q}) &=&\varphi (\bar{q})=\mathbb{E}\left[ \int_{0}^{{\hat{\tau}}%
\wedge \tau }e^{-\rho t}\left(\rho \varphi \left( \bar{q}_{t}\right) -\frac{1}{2}%
\left( \frac{h-l}{\sigma }\right) ^{2}\bar{q}_{t}^{2}(1-\bar{q}%
_{t})^{2}\varphi ^{\prime \prime }\left( \bar{q}_{t}\right) \right)dt+e^{-\rho ({\hat{\tau}}\wedge \tau )}\varphi (\bar{q}_{\tau \wedge {\hat{\tau}}})%
\right]   \notag \\
&\geq &\mathbb{E}\left[ -\int_{0}^{{\hat{\tau}}\wedge \tau }e^{-\rho t}C(%
\bar{q}_{t})dt+e^{-\rho \tau }G(\bar{q}_{\tau })\mathbf{1}_{\tau <{\ \hat{\tau}}}+e^{-\rho {\ \hat{\tau}}}V(\bar{q}_{{\ \hat{\tau}}})\mathbf{1}_{{\ 
\hat{\tau}}\leq \tau }\right]  \notag \\
&& +\delta \mathbb{E}\left[ \int_{0}^{{\hat{\tau%
}}\wedge \tau }e^{-\rho t}dt+e^{-\rho \tau }\mathbf{1}_{\tau <{\hat{\tau}}}%
\right],   \label{eq:V_ine}
\end{eqnarray}%
where we use that $\varphi \geq V$ on {$[\bar{q}-\delta ,\bar{q}+\delta ]$ }%
and (\ref{delta-inequalities}). Next, we claim that there exists $c_{0}>0$
such that 
\begin{equation*}
\mathbb{E}\left[ \int_{0}^{{\hat{\tau}}\wedge \tau }e^{-\rho t}dt+e^{-\rho
\tau }\mathbf{1}_{\tau <{\hat{\tau}}}\right] \geq c_{0},\text{ }\tau \in 
\mathcal{T}^{Y}_0.
\end{equation*}%
{To this end, let $v\left( q\right) :=c_{0}\left( 1-\frac{1}{\delta ^{2}}%
(q-\bar{q})^{2}\right) $ with $c_{0}=\min \left( \left(\rho +\frac{1}{16}\left(\frac{%
h-l}{\sigma\delta }\right)^2\right)^{-1},1\right) $. Then,}%
\begin{equation}
\rho v(q)-\frac{1}{2}\left( \frac{h-l}{\sigma }\right)
^{2}q^{2}(1-q)^{2}v^{\prime \prime }(q)\leq 1,\text{ \ \ }v(q)\leq c_{0}\leq
1,\text{ \ }q\in (\bar{q}-\delta ,\bar{q}+\delta ),  \label{eqn-1}
\end{equation}%
and 
\begin{equation}
v(\bar{q}-\delta )=0\text{, \ }v(\bar{q}+\delta )=0\text{ \ and \ }v(\bar{q}%
)=c_{0}>0.  \label{eq-2}
\end{equation}%
{A}pplying It\^{o}'s formula to $e^{-\rho t}w(\bar{q}_{t})$ gives 
\begin{eqnarray}
0<c_{0}=v(\bar{q}) &=&\mathbb{E}\left[ \int_{0}^{{\ \hat{\tau}}\wedge \tau
}e^{-\rho t}\left( \rho v\left( \bar{q}_{t}\right) -\frac{1}{2}\left( \frac{%
h-l}{\sigma }\right) ^{2}\bar{q}_{t}^{2}(1-\bar{q}_{t})^{2}v^{\prime \prime
}(\bar{q}_{t})\right) dt+e^{-\rho ({\hat{\tau}}\wedge \tau )}v(\bar{q}_{{\ 
\hat{\tau}}\wedge \tau })\right]   \notag \\
&\leq &\mathbb{E}\left[ \int_{0}^{{\ \hat{\tau}}\wedge \tau }e^{-\rho
t}dt+e^{-\rho \tau }\mathbf{1}_{\tau <{\ \hat{\tau}}}\right] ,\text{ }\tau
\in \mathcal{T}^{Y}_0,
\end{eqnarray}%
where the last inequality holds from (\ref{eqn-1}) and (\ref{eq-2}).
Plugging the last inequality into \eqref{eq:V_ine}, taking the supremum
over $\tau \in \mathcal{T}^{Y}_0$ and using \eqref{Contradiction} leads to a contradiction, 
since the Dynamic Programming Principle (DPP) yields
\begin{equation}
V(p)=\sup_{\tau \in \mathcal{T}^{Y}_0}\mathbb{E}\left[ -\int_{0}^{\tau \wedge
\hat \tau}e^{-\rho t}C({q_{t}^{p}})dt+e^{-\rho \tau }G(q_{\tau }^{p})1_{\tau
<\hat \tau}+e^{-\rho \hat \tau}V(q_{\hat \tau}^{p})1_{\hat \tau\leq \tau }\right].
\label{eqn:dpp}
\end{equation}%
The supersolution
property follows easily and the boundary conditions were shown earlier.
Uniqueness follows from the comparison principle; we refer the reader to \cite[Theorem~3.3]{crandall1992user}.
\end{proof}

{\subsection{The ``exploration'' and the ``product - selection'' regions} 

We introduce the sets 
\begin{equation}
\mathcal{S}:= \Big\{ q\in \lbrack 0,1]\,\,\Big\vert\,
\,\,V(q)=G(q)\Big\} \text{ \ and \ }\mathcal{E}:=\Big\{  q\in
\lbrack 0,1]\,\,\Big\vert \,\,\,V(q)>G(q)\Big\}.  \label{S-E-sets}
\end{equation}%
We will refer to $\mathcal{S}$ as the \textit{product-selection}, or \textit{%
stopping} \textit{region}, since it is therein optimal to immediately stop
and choose one of the products. Its complement $\mathcal{E}$ is the \textit{%
continuation or exploration region}, as it is optimal in this region to keep exploring
and acquiring information about the unknown product. In this section, we provide various results for the regions $\mathcal{S}$ and $\mathcal{E}$ for general pairs $(C(.),G(.))$.

\begin{lemma}
\label{lemma:never-stop}
    Let
\[
\widehat V(q) := \E\!\left[- \int_{0}^{\infty} e^{-\rho t} C(q_t)\, dt \right], q\in [0,1].
\]
Then, if $\mathcal{S} = \varnothing $, we have $\widehat V \ge G $ and, in turn, $V = \widehat V$.
\end{lemma}

\begin{proof}
    If $\mathcal{S} = \varnothing$, then $V > G$ everywhere and, therefore, $V$ satisfies
    $$\rho V(q)-\frac{1}{2}\left( \frac{h-l}{\sigma }\right)
^{2}q^{2}(1-q)^{2}V^{\prime \prime }\left( q\right)
+C(q) = 0, \quad \text{on } [0,1].$$
On the other hand, $\widehat V \in \mathcal{C}([0,1])$ satisfies the same linear equation on $[0,1]$ with the same boundary condition. Since $V$ is continuous on $[0,1]$ (by Lemma~\ref{lemma:continuity}), uniqueness of viscosity solutions in the class $\mathcal{C}([0,1])$ \cite{fleming2006controlled} implies that $V = \widehat V$. In particular, $\widehat V = V \ge G$.

If $\widehat V \ge G$, then $\widehat V$ satisfies the same variational inequality as $V$, i.e.
\[
\min\big(\,\rho \widehat V(q)-\frac{1}{2}\left( \frac{h-l}{\sigma }\right)
^{2}q^{2}(1-q)^{2}\widehat V^{\prime \prime }\left( q\right)
+C(q),\; \widehat V(q) - G(q)\,\big) = 0,
\]
and using, once more, the uniqueness of viscosity solutions in the class $\mathcal{C}([0,1])$, we conclude that $V = \widehat V$.
\end{proof}

Next, we study the structure of the stopping region.

\begin{proposition}
    Let $\underline{C} := \inf_{q \in [0,1]} C(q)$.  
If
\[
\quad \sup_{q \in [0,1]} G(q) > -\,\frac{1}{\rho}\underline{C},
\]
then, the stopping region $\mathcal{S}$ is non-empty $(\mathcal{S} \neq \varnothing)$.
\end{proposition}

\begin{proof}
    Since $\sup_{q \in [0,1]} G(q) > -\,\frac{1}{\rho}\underline{C}$, there exists $q^{\ast}$ such that $G(q^{\ast}) > -\,\frac{1}{\rho}\underline{C}$. Then,
\[
\widehat V(q^{\ast})
= \E \!\left[ - \int_{0}^{\infty} e^{-\rho t} C(q^{\ast}_t)\, dt \right]
\le -\int_{0}^{\infty} e^{-\rho t} \underline{C}\, dt
= -\,\frac{\underline{C}}{\rho} \le G(q^{\ast}),
\]
and using Lemma \ref{lemma:never-stop} we conclude.
\end{proof}

\begin{proposition}
    \label{prop:C1_solution} 
    
    Assume that the continuation region $\mathcal E$ is non-empty $(\mathcal{E} \neq \varnothing)$, and that $G$ is $\mathcal C^1 (\mathcal S)$. Then, the value function $V$ is $\mathcal{C}^{1}\left( \partial \mathcal{E}\right) $ and the unique $%
\mathcal{C}^{2}\left( {\mathring{\mathcal{E}}}\right) $ solution to 
\begin{equation}
\rho V(q) - \frac{1}{2}\left( \frac{h-l}{\sigma }\right)
^{2}q^{2}(1-q)^{2}V^{\prime \prime } (q)+C\left( q\right) =0,\text{ \ }q\in
\mathring{\mathcal{E}}\text{,}  \label{V-smooth-eqn}
\end{equation}
{where $\mathring{\mathcal{E}}$ denotes the interior of $\mathcal{E}$}.
\end{proposition}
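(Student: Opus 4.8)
The plan is to separate the statement into interior regularity on $\text{int}\,\mathcal{E}$ and the $\mathcal{C}^1$-fit across the free boundary $\partial\mathcal{E}$, using two structural facts already in hand. First, by Proposition~\ref{lem:supersolution}(i) we have $\mathcal{E}\subseteq[\varepsilon,1-\varepsilon]$, so on $\mathcal{E}$ the diffusion coefficient $a(q):=\tfrac12\bigl(\tfrac{h-l}{\sigma}\bigr)^2q^2(1-q)^2$ is bounded below by a positive constant $a_0$; the equation \eqref{V-smooth-eqn} is thus uniformly elliptic there. Second, $\mathcal{E}=\{V>G\}$ is relatively open, and by Proposition~\ref{lem:supersolution}(ii) it contains $\hat p$; hence $\partial\mathcal{E}\subseteq\mathcal{S}$ and $\partial\mathcal{E}$ is bounded away from the kink $\hat p$ of $G$, so $G$ is affine in a neighbourhood of every point of $\partial\mathcal{E}$.

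For the interior claim, on $\text{int}\,\mathcal{E}$ we have $V>G$, so the obstacle term in \eqref{OP} is inactive and, by stability of viscosity solutions, $V$ is a viscosity solution of the linear equation \eqref{V-smooth-eqn} there. Fix any $[c,d]\subset\text{int}\,\mathcal{E}$ and let $\phi\in\mathcal{C}^2([c,d])$ be the classical solution of $\rho\phi-a(q)\phi''+C(q)=0$ on $(c,d)$ with $\phi(c)=V(c)$, $\phi(d)=V(d)$; existence and uniqueness of $\phi$ follow from elementary linear ODE theory, using that $\rho>0$ and $a\geq a_0>0$ give the maximum principle for the homogeneous problem, and $\phi$ is in particular a viscosity solution. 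The comparison principle for this linear uniformly elliptic equation, applied to $V$ and $\phi$ with matching boundary data, forces $V\equiv\phi$ on $[c,d]$. Since $[c,d]$ is arbitrary, $V\in\mathcal{C}^2(\text{int}\,\mathcal{E})$ and solves \eqref{V-smooth-eqn}; uniqueness in the class $\mathcal{C}^2(\text{int}\,\mathcal{E})$ follows from the same comparison argument.

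For the fit, let $q^\ast\in\partial\mathcal{E}$, so $V(q^\ast)=G(q^\ast)$; on the "stopping side" $V\equiv G$ with $G$ affine near $q^\ast$, so $V$ has a one-sided derivative there equal to $G'(q^\ast)$, and on the "exploration side" $V\in\mathcal{C}^2$, so the other one-sided derivative exists as well. From $V\geq G$ and $V(q^\ast)=G(q^\ast)$ one gets that the exploration-side slope is $\geq G'(q^\ast)$, i.e. any kink of $V$ at $q^\ast$ must be convex. To exclude such a convex kink I would invoke the supersolution property of \eqref{OP} at $q^\ast$: at a convex kink one can touch $V$ from below by functions $\varphi(q)=G(q^\ast)+\beta(q-q^\ast)+\gamma(q-q^\ast)^2$ with $\beta$ strictly between the two one-sided slopes and $\gamma$ arbitrarily large, and for these $\rho\varphi(q^\ast)-a(q^\ast)\varphi''(q^\ast)+C(q^\ast)\to-\infty$, contradicting the supersolution inequality. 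Hence the one-sided derivatives agree, $V$ is differentiable at $q^\ast$ with $V'(q^\ast)=G'(q^\ast)$, and continuity of $V'$ up to $\partial\mathcal{E}$ follows by combining the $\mathcal{C}^2$ regularity inside $\mathcal{E}$ with the explicit boundary value $G'(q^\ast)$.

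The main obstacle I anticipate is in the free-boundary step: one must ensure that each point of $\partial\mathcal{E}$ genuinely possesses a one-sided "stopping neighbourhood", i.e. that components of $\mathcal{E}$ do not accumulate at it, so that the one-sided derivative on that side exists; this is exactly where the argument relies on $\mathcal{E}$ being bounded away from $\{0,1\}$ and $\hat p$ (and, ultimately, on the sharper structure of $\mathcal{E}$ to be established in Theorem~\ref{thm:no-regret}), together with $G$ being affine near $\partial\mathcal{E}$ so the obstacle introduces no spurious kink. Verifying that the test function $\varphi$ above indeed lies below $V$ on a one-sided neighbourhood of $q^\ast$, with the slope $\beta$ and the curvature $\gamma$ chosen consistently, is the one calculation that needs to be done carefully.
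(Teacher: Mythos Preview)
Your proposal is correct and follows essentially the same route as the paper: interior $\mathcal{C}^2$ regularity via uniform ellipticity on $\mathcal{E}\subset(\varepsilon,1-\varepsilon)$ plus comparison with the classical ODE solution, and the $\mathcal{C}^1$ fit at $\partial\mathcal{E}$ via the viscosity supersolution inequality applied to parabolas with arbitrarily large curvature.

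One simplification worth noting: your last paragraph worries about needing a genuine one-sided ``stopping neighbourhood'' at each $q^\ast\in\partial\mathcal{E}$ so that $V\equiv G$ there. The paper avoids this entirely. It uses only the global inequality $V\ge G$ together with $V(q^\ast)=G(q^\ast)$ to deduce $V_-'(q^\ast)\le G'(q^\ast)\le V_+'(q^\ast)$ directly from difference quotients, without assuming anything about which side is stopping or exploration. Then, if the two one-sided derivatives differ, any slope $d$ strictly between them yields a test function $\varphi_\varepsilon(p)=V(q^\ast)+d(p-q^\ast)+\tfrac{1}{2\varepsilon}(p-q^\ast)^2$ that touches $V$ from below, and sending $\varepsilon\downarrow0$ violates the supersolution inequality. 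So the structural concern you flag, while natural, is not actually needed for this step; the argument goes through before one knows that $\mathcal{E}$ is an interval.
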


\begin{proof}
Classical results (see, for example, \cite{fleming2012deterministic}, \cite[Theorem 2.6]{lian2020pointwise} or \cite[Lemma
5]{tang2021exploratory}) yield the existence and uniqueness of a $%
\mathcal{C}^{2}$ solution of (\ref{V-smooth-eqn}), say $w,$ in any open set $%
\mathcal{O}\subset \mathcal{E}$,\ with boundary condition $w=V.$
 On the other hand, the value function $V$ is a continuous viscosity solution
of \eqref{V-smooth-eqn} in $\mathcal O$ with the same boundary condition. By uniqueness, the Dirichlet problem
admits at most one continuous viscosity solution,
therefore, it must be that $w\equiv V$.

To show that $V\in $ $\mathcal{C}^{1}(\partial \mathcal{E)}$ we argue by
contradiction. To this end, for $\hat{q}\in \partial \mathcal{E}$, we have $%
V(\hat{q})=G(\hat{q})$ and $V(p)\geq G(p)$, $p\in \lbrack 0,1]$. Furthermore,%
\begin{equation*}
\frac{V(p)-V(\hat{q})}{p-\hat{q}}\leq \frac{G(p)-G(\hat{q})}{p-\hat{q}},%
\text{ }p<\hat{q}\text{ \ \ and \ }\frac{V(p)-V(\hat{q})}{p-\hat{q}}\geq 
\frac{G(p)-G(\hat{q})}{p-\hat{q}},\text{ }p>\hat{q}.
\end{equation*}%
Therefore, $V_{-}^{\prime }(\hat{q})\leq G^{\prime }(\hat{q})\leq
V_{+}^{\prime }(\hat{q})$. If $V$ is not differentiable at $\hat{q}$, there
must exist $d\in (V_{-}^{\prime }(\hat{q}),V_{+}^{\prime }(\hat{q}))$.
Let, 
\begin{equation*}
\varphi _{\varepsilon }(p):=V(\hat{q})+d(p-\hat{q})+\frac{1}{2\varepsilon }%
(p-\hat{q})^{2}.
\end{equation*}%
Then, $V$ dominates $\varphi _{\varepsilon }$ locally in a neighborhood of $%
\hat{q}$, i.e., $\hat{q}$ is a local minimum of $V-\varphi _{\varepsilon }$.
From the viscosity supersolution property of $V$, we deduce that 
\begin{equation*}
\rho V(\hat{q})-\frac{1}{2\varepsilon }\frac{(h-l)^{2}}{2\sigma ^{2}}\hat{q}%
^{2}(1-\hat{q})^{2}V''(\hat q)+C(\hat{q})\geq 0.
\end{equation*}%
Sending $\varepsilon \rightarrow 0$ yields a contradiction since $\hat{q}\in
(0,1)$, and both $V$ and $C$ are Lipschitz continuous on $\left[ 0,1%
\right] .$ If $\mathcal E=\varnothing$, the result holds trivially with $V\equiv G$.
\end{proof}

}

\section{The single (irreversible) decision problem with general information acquisition costs}
\label{sec:single-decision}
Next, we revisit the only case that has been so far studied but we extend it to arbitrary, general information costs. Specifically, we study \eqref{V-IR-generalG}  with \textit{general} function $C(.)$, which is equivalent to 
 \eqref{V-G} with $G$ given by 
\begin{equation}
G(q)={\max \Big(\mu ,qh+(1-q)l
\Big).}  \label{G-linear}
\end{equation}

We explore the structure of regions $\mathcal{S}$ and $\mathcal{E}$ (cf. \eqref{S-E-sets}), and investigate their dependence on the model parameters.
To our knowledge, only the constant case $C(q) = C_I, \ q \in [0,1]$ has been so far analyzed in the literature (see \cite{moscarini2001optimal,keppo2008demand}). 

The analysis is carried out using viscosity appropriate sub- and super-
solutions. To this end, 
we introduce point $\hat p \in (0,1)$, 
\begin{equation}
\label{eq:p}
\hat p:=\frac{\mu -l}{h-l},
\end{equation}
which we will use frequently later on. Note that at this point, $G(\hat p ) = \hat p h + (1- \hat p)l= \mu$.

\begin{proposition}
    \label{lem:supersolution} 
    The regions $\mathcal{S}$ (stopping) and $\mathcal{E}$ (exploration) are non-empty. 
    Moreover, there exists $\varepsilon>0$ such that
    $[0,\varepsilon] \cup [1-\varepsilon,1] \subseteq \mathcal{S}$ and $(\hat{p}-\varepsilon,\ \hat{p}+\varepsilon) \subseteq \mathcal{E}$, with $\hat p$ as in \eqref{eq:p}.
\end{proposition}

\begin{proof}
    i) {$[0,\varepsilon] \cup [1-\varepsilon,1] \subseteq \mathcal{S}$}.

\noindent We show that there exists a continuous viscosity supersolution $v${, such
that $v=G$ on $[0,\varepsilon ]\cup \lbrack 1-\varepsilon ,1],$ for
sufficiently small $\varepsilon >0$. To this end, }for some $M>0$ and $%
\varepsilon >0$ to be determined, let%
\begin{equation*}
v(q):=
\begin{cases}
    \mu +M\left( \frac{q}{\varepsilon }-1\right) _{+}^{3},\text{ \ }&0\leq
q\leq 2\varepsilon ,\\
\mu +M+\frac{q-2\varepsilon }{1-4\varepsilon }(h(1-2\varepsilon
)+2l\varepsilon -\mu ),\text{ \ }&2\varepsilon \leq q\leq 1-2\varepsilon ,\\
q h +(1-q)l+M\left( \frac{1-q}{\varepsilon }-1\right) _{+}^{3},\text{ \ }%
&1-2\varepsilon \le q \le 1.
\end{cases}
\end{equation*}%
The structure of $v$ is shown in Figure \ref{fig:v_region_nonempty}. 
\begin{figure}[H]
    \centering
    \includegraphics[width=0.45\linewidth]{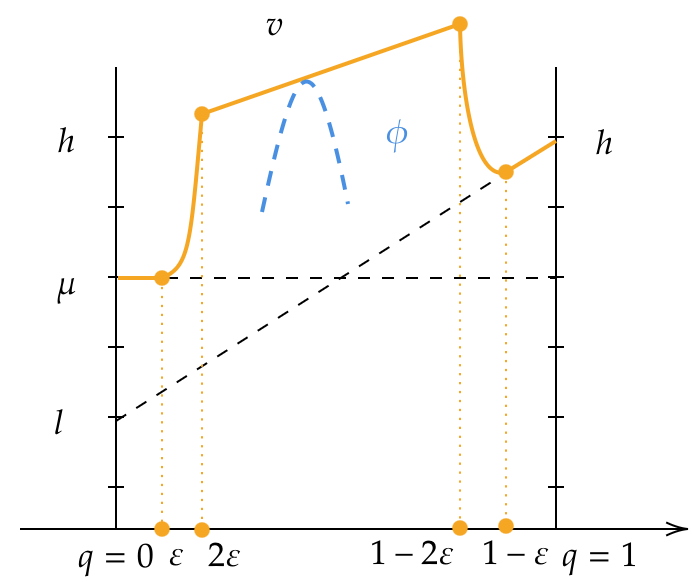}
    \caption{Viscosity supersolution $v$.}
    \label{fig:v_region_nonempty}
\end{figure}

Note that, by construction, $v$ is continuous and twice differentiable at $%
q=\varepsilon $ and $q=1-\varepsilon $. Furthermore,%
\begin{equation*}
v^{\prime }(q)=3\frac{M}{\varepsilon }\left( \frac{q}{\varepsilon }-1\right)
_{+}^{2}\text{ \ and \ }v^{\prime \prime }(q)=6\frac{M}{\varepsilon ^{2}}%
\left( \frac{q}{\varepsilon }-1\right) _{+},\text{\ \ }0<q<2\varepsilon ,
\end{equation*}%
\begin{equation*}
v^{\prime }(q):=\frac{1}{1-4\varepsilon }(h(1-2\varepsilon )+2l\varepsilon
-\mu )\text{ \ and }v^{\prime \prime }\left( q\right) =0,\text{ }%
2\varepsilon <q<1-2\varepsilon ,
\end{equation*}%
\begin{equation*}
v^{\prime }(q):=h-l-3\frac{M}{\varepsilon }\left( \frac{1-q}{\varepsilon }%
-1\right) _{+}^{2}\text{ \ and \ } v^{\prime \prime }\left( q\right) = 6\frac{M}{\varepsilon ^{2}}\left( \frac{1-q%
}{\varepsilon }-1\right) _{+},\text{\ }1-2\varepsilon <q<1.
\end{equation*}%
Then, for $q\in (2\varepsilon ,1-2\varepsilon )$, 
\begin{equation*}
\rho v(q)-\frac{1}{2}\left( \frac{h-l}{\sigma }\right)
^{2}q^{2}(1-q)^{2}v^{\prime \prime }\left( q\right) +C \left( q\right)
=\rho v(q)+C (q)>0,
\end{equation*}%
and, for $q\in \lbrack 0,2\varepsilon )\cup (1-2\varepsilon ,1]$, 
\begin{equation*}
\rho v(q)-\frac{1}{2}\left( \frac{h-l}{\sigma }\right)
^{2}q^{2}(1-q)^{2}v^{\prime \prime }\left( q\right) +C \left( q\right)
\geq \rho \mu -6\frac{M}{2\varepsilon ^{2}}\left( \frac{h-l}{\sigma }\right)
^{2}(2\varepsilon )^{2}\geq \rho \mu -12M\left( \frac{h-l}{\sigma }\right)
^{2}.
\end{equation*}
Setting $M:=\frac{\rho \mu }{24}\left(\frac{\sigma}{h-l}\right)^2$, we have that $\rho \mu -12M\left( \frac{h-l}{\sigma }\right)^{2}>0$.

Next, note that $v_{-}^{\prime }(2\varepsilon )=3M\varepsilon ^{-1}$ and $%
v_{+}^{\prime }(1-2\varepsilon )=-3M\varepsilon ^{-1}+h-l$. Therefore, by
choosing $\varepsilon <\min \left( \frac{3M}{2(\mu -l)},\frac{3M}{2(h-\mu )},%
\frac{1}{8}\right) $, it holds that 
\begin{equation*}
v_{-}^{\prime }(2\varepsilon )>\left. \left( \mu +M+\frac{q-2\varepsilon }{%
1-4\varepsilon }(h(1-2\varepsilon )+2l\varepsilon -\mu )\right) _{+}^{\prime
}\right\vert _{q=2\varepsilon }
\end{equation*}%
\begin{equation*}
=\left. \left( \mu +M+\frac{q-2\varepsilon }{1-4\varepsilon }%
(h(1-2\varepsilon )+2l\varepsilon -\mu )\right) _{-}^{\prime }\right\vert
_{q=1-2\varepsilon }
\end{equation*}%
\begin{equation*}
=\frac{h(1-2\varepsilon )+2l\varepsilon -\mu }{1-4\varepsilon }%
>v_{+}^{\prime }(1-2\varepsilon ).
\end{equation*}%
Thus, any $\mathcal{C}^{2}(0,1)$ test function $\varphi $ can only touch $v$ from below at some $q_{0}$ in $[0,2\varepsilon )\cup (2\varepsilon
,1-2\varepsilon )\cup (1-2\varepsilon ,1]$, on which $v$ is $\mathcal{C}^{2}$
and $v^{\prime \prime }(q_{0})\geq \varphi ^{\prime \prime }(q_{0})$.
Therefore, $v$ is a classical supersolution with $v=G$ on $[0,\varepsilon ]\cup
\lbrack 1-\varepsilon ,1]$. As a consequence, we have, by the comparison principle for
viscosity solutions in the class $\mathcal C([0,\varepsilon ]\cup
\lbrack 1-\varepsilon ,1])$, that
the value function satisfies $G\leq V\leq v$ and, hence, $V=G$, $q\in \lbrack
0,\varepsilon ]\cup \lbrack 1-\varepsilon ,1]$. We conclude. \hfill

ii) {$(\hat{p}-\varepsilon,\ \hat{p}+\varepsilon) \subseteq \mathcal{E}$}.

\noindent We show that there exists a continuous viscosity subsolution $u$, such
that $u(\hat p)>G(\hat p)$. To this
end, for some $M>0$ and $\varepsilon >0$ to be chosen, let 
\begin{equation*}
r(q):=\mu +M\left( -\frac{\varepsilon }{2}+\frac{1}{\varepsilon }%
(q-(\hat p-\varepsilon ))^{2}\right) ,\text{ \ }\hat p-\varepsilon \leq q\leq
\hat p+\varepsilon ,
\end{equation*}%
with%
\begin{equation*}
\varepsilon <\min \left( \frac{\hat p}{2},\frac{1}{32(15M+\overline{C}+\rho
\mu )}\left( \frac{h-l}{\sigma }\right) ^{2}\hat p^{2}(1-\hat p)^{2}M\right) \text{ \
\ and \ \ }M<\frac{2}{7}(h-l),
\end{equation*}%
where $\overline{C}:=\max_{|q-\hat p|\leq \varepsilon }C(q)$. 

We claim that $r$ is a viscosity subsolution in $[\hat p-\varepsilon , \hat p+\varepsilon ]$. Indeed, we have $r^{\prime \prime }\left( q\right) =\frac{2M}{%
\varepsilon }$ and 
\begin{align*}
& \rho r(q)-\frac{1}{2}\left( \frac{h-l}{\sigma }\right)
^{2}q^{2}(1-q)^{2}r^{\prime \prime }\left( q\right)  +C(q)\\
& \qquad \leq \rho (\mu +15\varepsilon M)+\overline{C}-\frac{M}{%
2\varepsilon }\left( \frac{h-l}{\sigma }\right) ^{2}(\hat p-\varepsilon
)^{2}(1-\hat p-\varepsilon )^{2}<0,
\end{align*}%
where the last inequality follows from the choice of $\varepsilon $. Next, let 
\begin{equation*}
u(q):=%
\begin{cases}
\max \left( r(q),G(q)\right) , & \hat p-\varepsilon \leq q\leq \hat p+\varepsilon , \\ 
G(q), & \text{otherwise}.%
\end{cases}%
\end{equation*}%
We depict $u$ in the graph below.
\begin{figure}[H]
    \centering
    \includegraphics[width=0.45\linewidth]{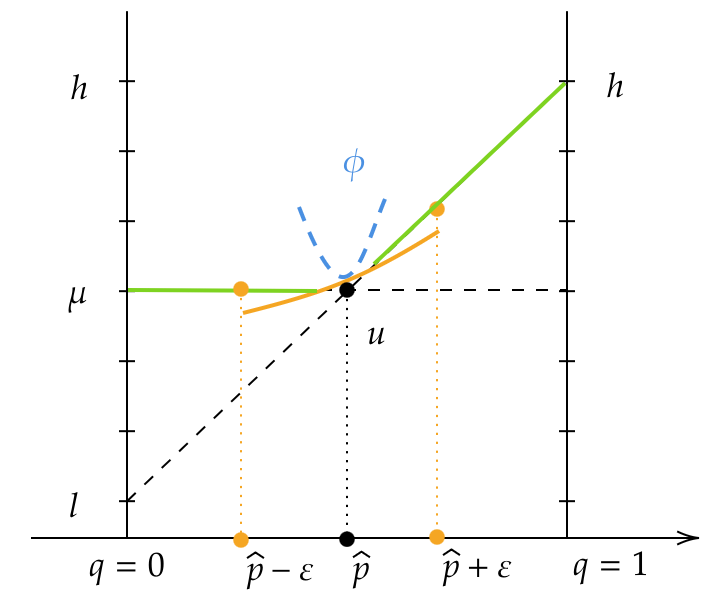}
    \caption{Viscosity subsolution $u$.}
    \label{fig:u_region_nonempty}
\end{figure}
\noindent By construction, $u$ is continuous {on $[0,1]$} since 
\begin{align*}
u(\hat p-\varepsilon )& =\mu -\frac{1}{2}M\varepsilon <\mu =G(\hat p-\varepsilon )\\
\intertext{and}
u(\hat p+\varepsilon )& =\mu +\frac{7}{2}M\varepsilon <\mu +(h-l)\varepsilon
=G(\hat p+\varepsilon ).
\end{align*}%
Therefore, $u$ is a continuous subsolution and, furthermore, $u(\hat p)=r(\hat p)=\mu +%
\frac{1}{2}M\varepsilon >\mu =G(\hat p).$ We easily deduce, by the comparison principle for viscosity solutions in the class of
continuous functions on $[0,1]$ satisfying the same boundary condition, that
$V(\hat p)\geq u(\hat p)>G(\hat p)$, and we conclude. 
\end{proof}

\bigskip

\begin{theorem}
    \label{thm:no-regret} 
    Let the value function $V$ as in \eqref{V-G} and the payoff function $G$ as in \eqref{G-linear}. The following assertions hold:
\begin{enumerate}
    \item There exist points $\underline{q}$ and $\overline{q},$ with $0<%
\underline{q}<\overline{q}<1,$ such that 
\begin{equation}
V(q)=G(q),\text{ }q\leq \underline{q}\text{, }q\geq \overline{q}\text{ \ \
and \ \ }V(q)>G(q),\text{ }q\in \lbrack \underline{q},\overline{q}].
\label{V-q}
\end{equation}
    \item $\ V$ is convex and non-decreasing on $[0,1]$.
\end{enumerate}
\end{theorem}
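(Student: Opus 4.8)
The plan is to first identify the continuation region $\mathcal{E}=\{V>G\}$ with a single open interval, and then read off convexity and monotonicity of $V$ by gluing the resulting three pieces. The engine for the first part is that, by Proposition~\ref{prop:C1_solution}, $V$ solves \eqref{V-smooth-eqn} in $\mathrm{int}\,\mathcal{E}$, so
\[
V''(q)=\frac{2\sigma^{2}}{(h-l)^{2}}\cdot\frac{\rho V(q)+C(q)}{q^{2}(1-q)^{2}}>0\qquad\text{on }\mathrm{int}\,\mathcal{E},
\]
since $V\ge G\ge\mu>0$ (cf.\ \eqref{Inequality1}), $C\ge C_{1}>0$, $\rho>0$, and $q\in(0,1)$ strictly on $\mathcal{E}$; the last point holds because Proposition~\ref{lem:supersolution} gives $\mathcal{E}\subseteq(\varepsilon,1-\varepsilon)$ for some $\varepsilon>0$, and also $\hat p\in\mathcal{E}$. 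Since $V$ is continuous on $[0,1]$ and satisfies $V''>0$ on the interior of each connected component of $\mathcal{E}$, it is \emph{strictly} convex on the closure of every such component.

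Next I would rule out all but one component. Let $(a,b)$ be a connected component of the open set $\mathcal{E}$. Then $a,b\in[\varepsilon,1-\varepsilon]\subset(0,1)$ and $a,b\notin\mathcal{E}$, hence $V(a)=G(a)$, $V(b)=G(b)$, and strict convexity gives, for every $q\in(a,b)$,
\[
V(q)<\frac{b-q}{b-a}\,G(a)+\frac{q-a}{b-a}\,G(b).
\]
If $b\le\hat p$, then $a<b\le\hat p$ and $G\equiv\mu$ on $[0,\hat p]$, so the right-hand side equals $\mu$, forcing $V(q)<\mu\le G(q)$ --- impossible; hence $b>\hat p$. If $a\ge\hat p$, then $G(q)=qh+(1-q)l$ on $[\hat p,1]$, which is affine, so the right-hand side equals $qh+(1-q)l\le G(q)$, again contradicting $V\ge G$; hence $a<\hat p$. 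Thus every component satisfies $a<\hat p<b$, and since distinct components are disjoint there is exactly one, say $(\underline{q},\overline{q})$. This yields $0<\varepsilon\le\underline{q}<\hat p<\overline{q}\le1-\varepsilon<1$ and $\mathcal{S}=\{V=G\}=[0,\underline{q}]\cup[\overline{q},1]$, which is assertion~1.

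For assertion~2, glue the pieces: on $[0,\underline{q}]\subset[0,\hat p]$ one has $V\equiv\mu$; on $[\overline{q},1]\subset[\hat p,1]$ one has $V(q)=qh+(1-q)l$, affine and increasing; on $(\underline{q},\overline{q})$ one has $V''>0$. By Proposition~\ref{prop:C1_solution}, $V$ is $\mathcal{C}^{1}$ across $\partial\mathcal{E}=\{\underline{q},\overline{q}\}$, and it is smooth on $(0,\underline{q})$ and $(\overline{q},1)$, so $V\in\mathcal{C}^{1}((0,1))$. Continuity of $V'$ at $\underline{q}$ and $\overline{q}$ then forces $V'(\underline{q})=0$ and $V'(\overline{q})=h-l$; consequently $V'\equiv0$ on $[0,\underline{q}]$, is strictly increasing from $0$ to $h-l$ on $[\underline{q},\overline{q}]$ (since $V''>0$ there), and $V'\equiv h-l$ on $[\overline{q},1]$. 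Hence $V'$ is non-negative and non-decreasing on $[0,1]$, i.e.\ $V$ is convex and non-decreasing.

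I expect the connectedness step to be the main obstacle. The crucial leverage is that inside $\mathcal{E}$ one obtains the \emph{strict} inequality $V''>0$ for free from \eqref{V-smooth-eqn}; combined with the obstacle constraint $V\ge G$ and the position of the kink point $\hat p$ of $G$, this is exactly what excludes any extra stopping interval in the interior of $(0,1)$. Everything else --- the endpoints lying in $(0,1)$ and the $\mathcal{C}^{1}$-gluing yielding assertion~2 --- then follows directly from Propositions~\ref{lem:supersolution} and~\ref{prop:C1_solution}.
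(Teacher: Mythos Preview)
Your proof is correct, and for assertion~1 you take a genuinely different route from the paper. The paper argues probabilistically via the Dynamic Programming Principle: it shows that if $V(\underline{q})=\mu$ for some $\underline{q}>0$, then necessarily $V\equiv\mu$ on $[0,\underline{q}]$ (because any strategy started in $[0,\underline{q}]$ and stopped upon exit pays at most $\mu$ minus a strictly positive running cost), and symmetrically on the right; from this the single-interval structure of $\mathcal{E}$ follows. You instead stay entirely within the PDE framework: you first extract $V''>0$ on each component of $\mathcal{E}$ from \eqref{V-smooth-eqn}, then use strict convexity together with the piecewise-affine form of $G$ to force every component to straddle $\hat p$, hence there is only one. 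Your argument is arguably more self-contained given the viscosity toolkit already in place (Propositions~\ref{lem:supersolution} and~\ref{prop:C1_solution}); the paper's DPP argument, on the other hand, is closer to the stochastic formulation and would adapt more readily to obstacles that are not piecewise affine.

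For assertion~2 the two proofs are close in spirit but differ in execution: the paper checks the convexity inequality at the junction points $\underline{q}$ and $\overline{q}$ by hand, while you use the $\mathcal{C}^{1}$ smooth-fit from Proposition~\ref{prop:C1_solution} to deduce directly that $V'$ is continuous and non-decreasing on $[0,1]$. Your version is a bit cleaner here.
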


\begin{proof}
    i)\ Recall from \eqref{OP} that $V(0)=\mu.$ We claim that, if there exists $%
\underline{q}>0$ such that $V(\underline{q})=\mu $, then it must be 
$V(q)=\mu ,\text{ \ }q\in \lbrack 0,\underline{q}]$.
We argue by contradiction, assuming there exists $p$ such that $%
V(p):=\sup_{p\in \lbrack 0,\underline{q}]}V(p)>\mu $. From the DPP (cf. \eqref{eqn:dpp}), we have that, for each $\hat \tau \in \mathcal{T}^{Y}_0$,
\begin{equation*}
V(p)=\sup_{\tau \in \mathcal{T}^{Y}_0}\mathbb{E}\left[ -\int_{0}^{\tau \wedge
\hat \tau}e^{-\rho t}C({q_{t}^{p}})dt+e^{-\rho \tau }G(q_{\tau }^{p})1_{\tau
<\hat \tau}+e^{-\rho \hat \tau}V(q_{\hat \tau}^{p})1_{\hat \tau\leq \tau }\right],
\end{equation*}%
where $q_{t}^{p}$ solves \eqref{belief-dynamics} starting at $p$. Then, for $\hat \tau:=\inf \left( t\,\,\geq 0|\,\,q_{t}^{p}=0\mathrm{\ }\text{or }%
q_{t}^{p}=\underline{q}\right) $, we obtain 
\begin{equation*}
V(p)=\sup_{\tau \in \mathcal{T}^{Y}_0}\mathbb{E}\left[ -\int_{0}^{\tau \wedge
\hat \tau}e^{-\rho t}C(q_{t}^{p})dt+e^{-\rho \tau }G(q_{\tau }^{p})1_{\tau
<\hat \tau}+e^{-\rho \hat \tau}\mu 1_{\hat \tau\leq \tau }\right] ,
\end{equation*}%
where we used that $V(q_{\hat \tau}^{p})1_{\hat \tau\leq \tau }=\mu 1_{\hat \tau\leq \tau }$.
Conditionally on the event $\{\tau <\hat \tau\}$, we have a.s. that $%
0\leq q_{\tau }^{p}\leq \underline{q}\leq \frac{\mu -l}{h-l}$ and, hence, $%
G(q_{\tau }^{p})=\mu $. This, however, yields a contradiction since 
\begin{eqnarray*}
V(p) &=&\sup_{\tau \in \mathcal{T}^{Y}_0}\mathbb{E}\left[ \int_{0}^{\tau \wedge
\hat \tau}-e^{-\rho t}C(q_{t}^{p})dt+e^{-\rho \tau }\mu 1_{\tau <\hat \tau}+e^{-\rho \hat \tau}\mu
1_{\hat \tau\leq \tau }\right]  \\
&=&\sup_{\tau \in \mathcal{T}^{Y}_0}\mathbb{E}\left[ \int_{0}^{\tau \wedge
\hat \tau}-e^{-\rho t}C(q_{t}^{p})dt+e^{-\rho \tau \wedge \hat \tau}\mu \right] \leq \mu ,
\end{eqnarray*}%
where the last inequality follows from $C(q)>C_1>0$. 

Similarly, we can show that
if there exists $\overline{q}\in (0,1)$ such that the $V(\overline{q})=%
\overline{q}h+(1-\overline{q})l$, then we must have 
$V(q)=qh+(1-q)l,\text{ \ }q\in \lbrack \overline{q},1]$.
Using the above results we deduce that the continuation region must satisfy $%
\mathcal{E}=${$(\underline{q},\overline{q}).$}

From Proposition \ref{prop:C1_solution}, we have that $V$ satisfies 
\begin{equation*}
\rho V (q)-\frac{1}{2}\left( \frac{h-l}{\sigma }\right)
^{2}q^{2}(1-q)^{2}V^{\prime \prime }+C(q)=0\text{ \ \ in \ }\mathcal{E}.
\end{equation*}

Since $V\left( q\right) \geq G(q)>0$ and $C(q)>0,$ $q\in \lbrack 0,1]$, the
above gives that $V^{\prime \prime }(q)>0,$ $q\in \mathcal{E}$, and thus $V$
is strictly convex in $\mathcal{E}$. Furthermore, $V$ is constant on $[0,\underline{q}%
]$ and linear on $[\overline{q},1]$. Therefore, to establish the convexity
on $\left[ 0,1\right] ,$ it suffices to show that $V$ is convex at both $%
\underline{q}$ and $\overline{q}$. To this end, for any $(a,b,\lambda )$
such that $\underline{q}=\lambda a +(1-\lambda ) b,$ with $0<a<\underline{q}<b<1$
and $\lambda \in (0,1)$, we have 
\begin{equation*}
\lambda V(a)+(1-\lambda )V(b)\geq \mu =V(\underline{q}),
\end{equation*}%
since $V(b)\geq \mu $. Similarly, for any $(a,b,\lambda )$ such that $%
\overline{q}=\lambda a +(1-\lambda ) b,$ with $0<a<\overline{q}<b<1$ and $\lambda
\in (0,1)$, we deduce that
\begin{equation*}
\lambda V(a)+(1-\lambda )V(b)=\lambda V(a)+(1-\lambda )\left( V(\overline{q}%
)+(h-l)(b-\overline{q})\right) 
\end{equation*}%
\begin{equation*}
=\lambda V(a)+(1-\lambda )\left( V(\overline{q})+\lambda (h-l)(b-a)\right)
>V\left( \bar{q}\right) ,
\end{equation*}%
where the last inequality holds since $V(a)>ha+l(1-a).$

The monotonicity follows easily as $V^{\prime }(q)\geq 0$ on $\mathcal{E}$
and $V$ is non-decreasing on $\mathcal{S}$. 
\end{proof}

 \begin{corollary}
     Let points $\underline{q}$ and $\overline{q}$ be as in Theorem \ref{thm:no-regret}. Then, the value function $V$ in \eqref{V-G} is the unique $%
\mathcal{C}^{2}\left( \left( \underline{q},\overline{q}\right) \right) $
solution of%
\begin{equation}
\begin{cases}
& \rho V (q)-\frac{1}{2}\left( \frac{h-l}{\sigma }\right)
^{2}q^{2}(1-q)^{2}V^{\prime \prime }(q)+C(q)=0, \\ 
& V(\underline{q})=\mu ,V(\overline{q})=\overline{q} h + (1-\overline{q} )l, \\ 
& V^{\prime }(\underline{q})=0,V^{\prime }(\overline{q})=h-l,%
\end{cases}
\label{Stephan}
\end{equation}%
in the class of Lipschitz continuous functions.
\end{corollary}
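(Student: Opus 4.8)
The plan is to get existence of a solution for free from the results already proved, and then to reduce the uniqueness assertion to the elementary uniqueness theory for linear second-order ODEs on a compact interval. For existence, recall from Theorem~\ref{thm:no-regret} that $\mathcal{E}=(\underline{q},\overline{q})$ and, from Proposition~\ref{lem:supersolution}(ii), that $\hat p\in\mathcal{E}$, so $0<\underline{q}<\hat p<\overline{q}<1$. Consequently $G$ is constant ($\equiv\mu$) on a neighborhood of $\underline{q}$ and affine ($=qh+(1-q)l$) on a neighborhood of $\overline{q}$, whence $G'(\underline{q})=0$ and $G'(\overline{q})=h-l$. Since $V=G$ on $\mathcal{S}$ and $V$ is continuous, $V(\underline{q})=\mu$ and $V(\overline{q})=\overline{q}h+(1-\overline{q})l$; since $V\in\mathcal{C}^1(\partial\mathcal{E})$ by Proposition~\ref{prop:C1_solution}, the smooth-fit identities $V'(\underline{q})=G'(\underline{q})=0$ and $V'(\overline{q})=G'(\overline{q})=h-l$ hold. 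Together with the interior equation~\eqref{V-smooth-eqn} of Proposition~\ref{prop:C1_solution}, this shows that $V$ is a $\mathcal{C}^2((\underline{q},\overline{q}))$ solution of~\eqref{Stephan}; moreover $V\in\mathcal{C}^1([\underline{q},\overline{q}])$, hence Lipschitz on $[\underline q,\overline q]$.

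For uniqueness I would put the interior equation in the normal form $V''(q)=\kappa(q)\bigl(\rho V(q)+C(q)\bigr)$ with $\kappa(q):=2\sigma^2/\bigl((h-l)^2 q^2(1-q)^2\bigr)$. By (the proof of) Proposition~\ref{prop:C1_solution}, $\overline{\mathcal{E}}=[\underline{q},\overline{q}]$ is contained in $[\varepsilon,1-\varepsilon]$ for some $\varepsilon>0$, so $\kappa$ is bounded and Lipschitz on $[\underline{q},\overline{q}]$, and $C$ is Lipschitz there by Assumption~\ref{ass:CI}; thus the equation is a non-singular linear second-order ODE up to and including the endpoints. Let $\widetilde{V}$ be any Lipschitz $\mathcal{C}^2((\underline{q},\overline{q}))$ solution of~\eqref{Stephan}. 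Lipschitz continuity makes $\widetilde{V}'$ bounded, and then the ODE makes $\widetilde{V}''$ bounded, so $\widetilde{V}$ extends to a $\mathcal{C}^2$ function on $[\underline{q},\overline{q}]$ for which the first two conditions in~\eqref{Stephan} read $\widetilde{V}(\underline{q})=\mu=V(\underline{q})$ and $\widetilde{V}'(\underline{q})=0=V'(\underline{q})$. Since $\widetilde{V}$ and $V$ solve the same linear ODE with the same Cauchy data at $q=\underline{q}$, the Picard--Lindel\"of uniqueness theorem forces $\widetilde{V}\equiv V$ on $[\underline{q},\overline{q}]$ (the remaining two conditions at $\overline{q}$ then holding automatically). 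Hence $V$ is the unique Lipschitz $\mathcal{C}^2((\underline{q},\overline{q}))$ solution of~\eqref{Stephan}.

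The potential obstacles are all pre-empted by the quoted results rather than by anything in the corollary itself: the strict inclusions $\underline{q}<\hat p<\overline{q}$, which guarantee that $G$ has no kink at either free boundary and hence that $0$ and $h-l$ are the correct smooth-fit slopes, are exactly Proposition~\ref{lem:supersolution}(ii), and the $\mathcal{C}^1$ fit on $\partial\mathcal{E}$ is Proposition~\ref{prop:C1_solution}. It is worth emphasizing that~\eqref{Stephan} is formally over-determined --- a second-order ODE carrying two conditions at each endpoint --- so the real content is that $\underline{q},\overline{q}$ from Theorem~\ref{thm:no-regret} are precisely the free-boundary points rendering the system consistent; the uniqueness of the solution \emph{for those fixed endpoints} needs only the two Cauchy conditions at $\underline{q}$ together with the linearity of the equation, and is therefore routine.
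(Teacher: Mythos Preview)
The paper states this corollary without proof, treating it as an immediate consequence of Proposition~\ref{prop:C1_solution} and Theorem~\ref{thm:no-regret}. Your proposal correctly supplies the details: the existence part is exactly what the paper implicitly has in mind (the key observation being $\underline q<\hat p<\overline q$, which follows from the proof of Proposition~\ref{lem:supersolution} and gives the smooth-fit slopes $0$ and $h-l$), and your uniqueness argument via Cauchy data at $\underline q$ for the non-degenerate linear ODE on $[\underline q,\overline q]$ is a clean and appropriate way to finish, one the paper leaves to the reader.
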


To facilitate the presentation, we introduce the notation 
\begin{equation}
\mathcal{S}_{1}:=[0,\underline{q}]\text{ \ \ and \ }\mathcal{S}_{2}:=[%
\overline{q},1].  \label{S1-S2}
\end{equation}

\textit{Discussion: }{Theorem \ref{thm:no-regret}} implies that
the DM will choose product $A$ if the initial belief $q\in \mathcal{S}_{1}$
and product $B$ if $q\in \mathcal{S}_{2}$. If, on the other hand, $q\in
\left( \underline{q},\overline{q}\right) $ the DM commences learning about the
unknown product $B$ and makes a decision when the belief process hits either one
of the cut-off points, $\underline{q}$\ and $\overline{q}$.

\medskip

We will be calling $[0,\underline{q}]$\ the \textit{safe choice} region, $[%
\underline{q},\overline{q}]$\ the \textit{exploration} region, and $[%
\overline{q},1]$\ the \textit{new choice} region. In general, calculating $%
\underline{q}$\ and $\overline{q}$\ in closed form is not possible, even under simplified assumptions for the information cost $C$ and the payoff function $G.$
One of the main contributions herein is that, despite this lack of
tractability, we are still able to study the behavior of the solution and
the various regions for general information acquisition costs.

\bigskip 

\begin{remark}
    In the degenerate case $\sigma \equiv 0$, the DM can immediately
observe the true value of $\Theta $ as soon as she has access to the
signal process $Y$, which now degenerates to $dY_t=\Theta dt$. If the DM starts
with belief $q_{0_{-}}=q$ at time $t=0_{-}$ and has access to $Y$ at time $t=0
$, then the belief follows the c\`{a}dl\`{a}g process 
\begin{equation}
q_0=%
\begin{cases}
1, & \text{ if }\Theta =h, \\ 
0, & \text{ if }\Theta =l,%
\end{cases}%
\qquad \text{ and }q_t=q_0,\text{ }t\geq 0.  \label{q-disc}
\end{equation}%
Given the possible discontinuity of the belief process at time $t=0$, the
corresponding value function is now defined as 
\begin{equation}
V(q)=\sup_{\tau \in \mathcal{T}^{Y}_0}\mathbb{E}\left. \left[ -\int_{0}^{\tau
}e^{-\rho t}C(q_t)dt+e^{-\rho \tau }{G(q_\tau )}\right\vert q_{0_{-}}=q%
\right] .  \label{V-disc}
\end{equation}%
We argue that, in this case, the optimal stopping time $\tau ^{\ast }=0$.
Indeed, from (\ref{q-disc}) it holds, almost surely, that 
\begin{eqnarray*}
e^{-\rho t}C(q_t) &=&e^{-\rho t}C(q_0)\leq C(q_0),\text{ }%
t\geq 0,  \notag \\
e^{-\rho t}G(q_t) &=&e^{-\rho t}G(q_0)\leq G(q_0),\text{ }t\geq 0.
\end{eqnarray*}%
In turn, (\ref{V-disc})\ yields 
\begin{equation*}
V(q)=\mathbb{E}\left[ \left. G\left( q_0\right) \right\vert q_{0_{-}}=q%
\right] =qG(1)+(1-q)G(0)=qh+(1-q)\mu .
\end{equation*}
\end{remark}

\subsection{Sensitivity analysis for general information costs }
\label{sec:precise_information}

We analyze the effects of $\rho ,\sigma $ and $C(.)$ on the regions $%
\mathcal{S}_{1},\mathcal{S}_{2}$ and $\mathcal{E}$ for \textit{arbitrary}
information cost functions $C(.)$ and payoff functions $G$ of form (\ref%
{G-linear}). 
The solution approach is based on building appropriate viscosity sub- and
super- solutions. For the reader's convenience, we put the proofs
in the Electronic Companion.

The following auxiliary result will be used repeatedly.

\begin{lemma}
    \label{lem:comparison}Let $V^{(1)},V^{(2)}\in \mathcal{C}([0,1])$
satisfying, respectively, 
\begin{equation*}
V^{(i)}(q)=G(q),\ \  q\in \lbrack 0,\underline{q}_{i}]\cup \lbrack 
\overline{q}_{i},1] \text{ \ and }\ V^{(i)}(q)>G(q),\text{ }q\in (\underline{q}%
_{i},\overline{q}_{i}),\text{ }i=1,2.
\end{equation*}%
If $V^{(1)}\left( q\right) \geq V^{(2)}\left( q\right) $, $q\in \left[ 0,1\right]
,$ then it must be that 
\begin{equation*}
\underline{q}_{1}\leq \underline{q}_{2}\ \text{and }\ \overline{q}_{2}\leq \overline{q}%
_{1}.
\end{equation*}
\end{lemma}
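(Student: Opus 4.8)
The claim is purely about the relative location of the two exploration intervals, given the pointwise domination $V_1\geq V_2$ and the fact that each $V_i$ coincides with the common obstacle $G$ exactly outside its own interval $(\underline q_i,\overline q_i)$. The plan is to argue each inequality separately and by contradiction, using only the structure of the stopping sets and the hypothesis $V_1\geq V_2$; no PDE or viscosity machinery is needed here, only the defining relations $V_i=G$ on $[0,\underline q_i]\cup[\overline q_i,1]$ and $V_i>G$ strictly inside.

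First I would prove $\underline q_1\leq \underline q_2$. Suppose instead that $\underline q_2<\underline q_1$. Pick any point $q$ with $\underline q_2<q<\min(\underline q_1,\overline q_2)$ — such a point exists since $\underline q_2<\underline q_1$ and $\underline q_2<\overline q_2$. On one hand $q\in(\underline q_2,\overline q_2)$, so $V_2(q)>G(q)$. On the other hand $q\leq \underline q_1$ (indeed $q<\underline q_1$), so $q\in[0,\underline q_1]$ and hence $V_1(q)=G(q)$. Combining, $V_1(q)=G(q)<V_2(q)$, contradicting $V_1\geq V_2$. Therefore $\underline q_1\leq\underline q_2$.

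The second inequality $\overline q_2\leq\overline q_1$ is the mirror image. Suppose $\overline q_1<\overline q_2$. Choose $q$ with $\max(\overline q_1,\underline q_2)<q<\overline q_2$; this is possible because $\overline q_1<\overline q_2$ and $\underline q_2<\overline q_2$. Then $q\in(\underline q_2,\overline q_2)$ gives $V_2(q)>G(q)$, while $q\geq\overline q_1$ gives $q\in[\overline q_1,1]$ and hence $V_1(q)=G(q)$; again $V_1(q)=G(q)<V_2(q)$ contradicts the hypothesis. Hence $\overline q_2\leq\overline q_1$, completing the proof.

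I do not anticipate a genuine obstacle here: the statement is essentially a bookkeeping lemma. The only point requiring a word of care is the \emph{existence} of the intermediate point $q$ in each case — one must check the relevant intervals are nonempty, which follows immediately from the assumed strict ordering being contradicted (e.g. $\underline q_2<\underline q_1$ together with $\underline q_2<\overline q_2$ forces $(\underline q_2,\min(\underline q_1,\overline q_2))\neq\emptyset$). One should also note in passing that each $\underline q_i<\overline q_i$, which is part of the hypothesis that the exploration region $(\underline q_i,\overline q_i)$ is a genuine (nonempty) interval on which $V_i>G$.
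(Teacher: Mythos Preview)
Your proof is correct and uses essentially the same idea as the paper. The paper's version is slightly more streamlined: it observes directly that for every $q\in(\underline q_2,\overline q_2)$ one has $V_1(q)\ge V_2(q)>G(q)$, hence $q\in(\underline q_1,\overline q_1)$, yielding the inclusion $(\underline q_2,\overline q_2)\subseteq(\underline q_1,\overline q_1)$ and both inequalities at once, whereas you unpack this into two separate contradiction arguments.
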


\begin{proof}
    For every $q\in (\underline{q}_{2},\overline{q}_{2})
$, it holds that $V^{(1)}(q)\geq V^{(2)}(q)>G(q)$, and, thus, $q\in (\underline{q%
}_{1},\overline{q}_{1})$. Therefore, $(\underline{q}_{2},\overline{q}%
_{2})\subseteq (\underline{q}_{1},\overline{q}_{1})$. 
\end{proof}

The next results yield several monotonicity properties of the exploration region with respect to model inputs  $\rho,\ \sigma, \ C$ and $\mu$.

\begin{proposition}
    \label{prop:constant} The following
assertions hold: 
\begin{enumerate}
    \item If $\rho _{1}\leq \rho _{2},$ then $\underline{q}_{1}\leq $ $\underline{%
q}_{2}$ while $\overline{q}_{1}\geq $ $\overline{q}_{2}$. 
\label{prop:i}
\item If $\sigma _{1}\leq \sigma _{2},$ then $\underline{q}_{1}\leq $ $%
\underline{q}_{2}$ while $\overline{q}_{1}\geq $ $\overline{q}_{2}$. 
\label{prop:iii copy(1)}
\item If $C_{1}(q)\geq $ $C_{2}(q),$ for each $q\in \left[ 0,1\right]
,$ then $\underline{q}_{1}\leq $ $\underline{q}_{2}$ while $\overline{q}_{1}%
\geq $ $\overline{q}_{2}.$ 
\label{prop:ii copy(1)}
\end{enumerate}
\end{proposition}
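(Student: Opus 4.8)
The plan is to prove each of the three monotonicity statements by constructing, for the ``smaller'' parameter configuration, an explicit comparison function and then invoking Lemma \ref{lem:comparison}. The common strategy in all three cases is the same: fix the two parameter values, let $V_i$ denote the value function associated with configuration $i$ (with exploration region $(\underline{q}_i,\overline{q}_i)$), and show that $V_1(q) \ge V_2(q)$ for all $q \in [0,1]$; Lemma \ref{lem:comparison} then immediately yields $\underline{q}_1 \le \underline{q}_2$ and $\overline{q}_2 \le \overline{q}_1$, i.e.\ the exploration region is wider for configuration $1$. So the whole task reduces to establishing the pointwise domination $V_1 \ge V_2$ in each of the three settings, and for this I would use the viscosity comparison principle together with the characterization in Proposition \ref{prop:C1_solution}: it suffices to check that $V_2$ (or a suitably modified version of it) is a viscosity subsolution of the variational inequality $(\mathrm{OP})$ associated with configuration $1$, since $V_1$ is the unique viscosity solution and comparison holds in the Lipschitz class.

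For part (iii), the cost monotonicity, this is the cleanest: if $C_1(q) \ge C_2(q)$ pointwise, then in the operator $\min\big(\rho V - \tfrac12(\tfrac{h-l}{\sigma})^2 q^2(1-q)^2 V'' + C_i(q),\, V - G\big)$ decreasing the cost makes the first term smaller, so a supersolution for the $C_2$-problem is automatically a supersolution for... actually the direction I want is: $V_2$ solves the $C_2$-problem; since $C_2 \le C_1$, on $\mathcal{E}_2$ we have $\rho V_2 - \tfrac12(\tfrac{h-l}{\sigma})^2 q^2(1-q)^2 V_2'' + C_1 \ge \rho V_2 - \tfrac12(\cdots) V_2'' + C_2 = 0 \ge$ nothing useful; instead note $V_2 - G \ge 0$ everywhere and on the stopping set $\mathcal{S}_2$ the first operator term may be negative. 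The right move is to show $V_2$ is a \emph{subsolution} of the $C_1$-problem: where $V_2 > G$ we have $\rho V_2 - (\cdots)V_2'' + C_2 = 0$, hence $\rho V_2 - (\cdots)V_2'' + C_1 \ge 0$, which is the wrong sign for a subsolution of the $\min$. Thus I would instead argue directly: $V_1(q) = \sup_\tau \E[-\int_0^\tau e^{-\rho t}C_1(q_t)dt + e^{-\rho\tau}G(q_\tau)] \ge \sup_\tau \E[-\int_0^\tau e^{-\rho t}C_2(q_t)dt + e^{-\rho\tau}G(q_\tau)] = V_2(q)$ since $C_1 \ge C_2 \Rightarrow -\int C_1 \le -\int C_2$ — wait, that gives $V_1 \le V_2$. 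So with $C_1 \ge C_2$ we get $V_1 \le V_2$, and Lemma \ref{lem:comparison} applied with the roles swapped gives $\underline{q}_2 \le \underline{q}_1$ and $\overline{q}_1 \le \overline{q}_2$ — but the proposition claims $\underline{q}_1 \le \underline{q}_2$. Hence the correct reading is that higher cost \emph{shrinks} exploration, so the representation argument directly yields the claim once indices are matched carefully; part (iii) is then essentially a one-line probabilistic comparison of the two value functionals, with no viscosity machinery needed.

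For parts (i) and (ii), the probabilistic representation no longer gives a free comparison, because changing $\rho$ alters both the discount in the running cost and in the obstacle, and changing $\sigma$ alters the law of the belief process $q_t$ itself. Here I would use the viscosity/PDE route. For $\rho$: if $\rho_1 \le \rho_2$, I want $V_1 \ge V_2$; it suffices to show $V_2$ is a viscosity subsolution of $(\mathrm{OP})_{\rho_1}$. On $\mathcal{E}_2$, using $\rho_2 V_2 - \tfrac12(\tfrac{h-l}{\sigma})^2 q^2(1-q)^2 V_2'' + C = 0$ and the facts from Theorem \ref{thm:no-regret} that $V_2 \ge 0$ (indeed $V_2 \ge G > 0$), we get $\rho_1 V_2 - \tfrac12(\cdots)V_2'' + C = (\rho_1 - \rho_2)V_2 \le 0$, which is exactly the subsolution inequality; on $\mathcal{S}_2$, $V_2 = G$ so $V_2 - G = 0 \le 0$; and one checks the two pieces glue into a global viscosity subsolution using that $V_2 \in \mathcal{C}^1$ at $\partial\mathcal{E}_2$ (Proposition \ref{prop:C1_solution}) — at a kink point a $\mathcal{C}^2$ test function touching from above has $V_2'' \le \varphi''$ in the appropriate one-sided sense. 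Comparison then gives $V_2 \le V_1$, and Lemma \ref{lem:comparison} finishes. For $\sigma$: the coefficient $\tfrac12(\tfrac{h-l}{\sigma})^2 q^2(1-q)^2$ is \emph{decreasing} in $\sigma$, and since $V_2'' \ge 0$ on $\mathcal{E}_2$ (convexity, from Theorem \ref{thm:no-regret}), replacing $\sigma_2$ by the smaller $\sigma_1$ \emph{increases} this coefficient, hence decreases $-\tfrac12(\tfrac{h-l}{\sigma_1})^2 q^2(1-q)^2 V_2''$, so on $\mathcal{E}_2$ we get $\rho V_2 - \tfrac12(\tfrac{h-l}{\sigma_1})^2 q^2(1-q)^2 V_2'' + C \le \rho V_2 - \tfrac12(\tfrac{h-l}{\sigma_2})^2 q^2(1-q)^2 V_2'' + C = 0$; again $V_2$ is a global subsolution of $(\mathrm{OP})_{\sigma_1}$ and comparison yields $V_2 \le V_1$.

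The main obstacle, and the point requiring genuine care, is the gluing argument: verifying that the piecewise-defined $V_2$ (solving the PDE on $\mathcal{E}_2$, equal to $G$ on $\mathcal{S}_2$) is a \emph{global} viscosity subsolution of the altered variational inequality across the free-boundary points $\underline{q}_2$ and $\overline{q}_2$. This uses the $\mathcal{C}^1$ regularity at $\partial\mathcal{E}_2$ established in Proposition \ref{prop:C1_solution} together with the convexity/monotonicity of $V_2$ from Theorem \ref{thm:no-regret}, and the fact that on the stopping region the obstacle inequality $V_2 - G = 0$ trivially makes the $\min$ nonpositive regardless of the sign of the first term — so the only delicate check is at the boundary points themselves, where one shows no admissible $\mathcal{C}^2$ test function can be tangent from above with a strictly positive value of the PDE operator. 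I would also note that the convexity of $V_2$, which is what makes the $\sigma$-argument go through, is precisely the property that could fail for a general obstacle $G$, which is why these statements are placed after Theorem \ref{thm:no-regret} and rely on $G$ having the specific form \eqref{G-linear}.
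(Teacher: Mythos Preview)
Your strategy is correct and matches the paper's at the top level: show $V_1 \ge V_2$ so that Lemma~\ref{lem:comparison} gives the ordering of the cutoffs. The execution, however, differs. You verify the viscosity subsolution property of $V_2$ for the configuration-$1$ equation by splitting into $\mathcal{E}_2$ and $\mathcal{S}_2$, invoking the $\mathcal{C}^2$-regularity from Proposition~\ref{prop:C1_solution} and, in the $\sigma$-case, the convexity $V_2''\ge 0$ from Theorem~\ref{thm:no-regret}. The paper instead argues purely at the level of test functions: given any $\varphi\in\mathcal{C}^2$ touching $V_2$ from above at $q_0$, it uses that $V_2$ is already a viscosity subsolution of \emph{its own} equation, so either $\varphi(q_0)\le G(q_0)$ or the PDE term with $\sigma_2$ is $\le 0$; in the latter case $\rho\varphi(q_0)+C(q_0)>0$ forces $\varphi''(q_0)>0$, whence the PDE term with the smaller $\sigma_1$ is even more negative. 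This sidesteps both the decomposition into regions and the appeal to convexity --- the needed sign on the second derivative comes from the test-function inequality itself, not from structural properties of $V_2$.

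Your ``gluing'' concern is also over-stated: at any free-boundary point one has $V_2=G$, so the obstacle term in the $\min$ is already $0$ and the subsolution inequality holds regardless of the test function; no $\mathcal{C}^1$-matching is actually needed there. For part~(iii), your direct probabilistic comparison of the two value functionals is a legitimate one-line alternative to the viscosity route, and your index confusion is justified --- the hypothesis $C_1\ge C_2$ in the statement is inconsistent with the ensuing discussion (larger cost should shrink, not widen, the exploration region), so the inequality there appears to be misprinted.
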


\begin{proposition}\label%
{prop:constant:mu} 
{If $\mu _{1}\leq \mu _{2},$ then $\underline{q}_{1}\leq $ $%
\underline{q}_{2}$ while $\overline{q}_{1}\leq $ $\overline{q}_{2}$. }
\end{proposition}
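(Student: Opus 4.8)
The plan is to compare the $\mu_1$- and $\mu_2$-problems directly through their value functions and obstacles, rather than through Lemma~\ref{lem:comparison}, which does not apply here since the two problems carry \emph{different} obstacles $G_i(q)=\max(\mu_i,qh+(1-q)l)$. Write $V_i$ for the value function of \eqref{V-G} with obstacle $G_i$, let $\underline{q}_i<\overline{q}_i$ be the cut-off points from Theorem~\ref{thm:no-regret}, and set $\hat{p}_i:=(\mu_i-l)/(h-l)$. By Theorem~\ref{thm:no-regret} and Proposition~\ref{lem:supersolution}, $\underline{q}_i<\hat{p}_i<\overline{q}_i$, and $\mu_1\le\mu_2$ gives $\hat{p}_1\le\hat{p}_2$; moreover $G_i(q)=qh+(1-q)l$ on $[\hat{p}_i,1]$ and $G_i(q)=\mu_i$ on $[0,\hat{p}_i]$.

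\emph{The inequality $\overline{q}_1\le\overline{q}_2$.} I would first observe that $G_1\le G_2$ pointwise, so for every stopping time the payoff in the $\mu_1$-problem is dominated by that in the $\mu_2$-problem, whence $V_1\le V_2$ on $[0,1]$. On the interval $[\overline{q}_2,1]\subseteq[\hat{p}_2,1]\subseteq[\hat{p}_1,1]$ the two obstacles coincide, $G_1(q)=G_2(q)=qh+(1-q)l$, and $V_2=G_2$ there by definition of the stopping region. Hence $G_1(q)\le V_1(q)\le V_2(q)=G_2(q)=G_1(q)$ on $[\overline{q}_2,1]$, so $V_1=G_1$ on $[\overline{q}_2,1]$, i.e. this interval lies in the stopping region $[0,\underline{q}_1]\cup[\overline{q}_1,1]$ of the $\mu_1$-problem. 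Since $\overline{q}_2>\hat{p}_2\ge\hat{p}_1>\underline{q}_1$, the interval $[\overline{q}_2,1]$ lies strictly to the right of $\underline{q}_1$, so $[\overline{q}_2,1]\subseteq[\overline{q}_1,1]$, which is exactly $\overline{q}_1\le\overline{q}_2$.

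\emph{The inequality $\underline{q}_1\le\underline{q}_2$.} Here the plan is to pin $V_2$ down on $[0,\underline{q}_1]$ using the auxiliary function $V_1+(\mu_2-\mu_1)$. The key elementary bound is
\[ G_2(q)\le G_1(q)+(\mu_2-\mu_1),\qquad q\in[0,1], \]
verified by cases in $q$ (on $[0,\hat{p}_1]$ both sides equal $\mu_2$; on $(\hat{p}_1,\hat{p}_2)$ it reduces to $qh+(1-q)l\ge\mu_1$; on $[\hat{p}_2,1]$ it reduces to $\mu_2\ge\mu_1$). Combined with $V_1\ge G_1$ this gives $V_1+(\mu_2-\mu_1)\ge G_2$; furthermore $V_1+(\mu_2-\mu_1)$ is a supersolution of the PDE in \eqref{OP} (adding the nonnegative constant $\mu_2-\mu_1$ only increases the zeroth-order term $\rho V$), and it dominates $G_2$ at $q=0,1$. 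Hence $V_1+(\mu_2-\mu_1)$ is a viscosity supersolution of the $\mu_2$-variational inequality, and the comparison principle for \eqref{OP} yields $V_2\le V_1+(\mu_2-\mu_1)$ on $[0,1]$. On $[0,\underline{q}_1]$ we have $V_1=\mu_1$, so $V_2\le\mu_2$; since also $V_2\ge G_2=\mu_2$ on $[0,\underline{q}_1]\subseteq[0,\hat{p}_1]$, we get $V_2=\mu_2=G_2$ on $[0,\underline{q}_1]$, i.e. this interval lies in the stopping region of the $\mu_2$-problem. As $\underline{q}_1<\hat{p}_1\le\hat{p}_2<\overline{q}_2$, the interval $[0,\underline{q}_1]$ lies strictly to the left of $\overline{q}_2$, so $[0,\underline{q}_1]\subseteq[0,\underline{q}_2]$, i.e. $\underline{q}_1\le\underline{q}_2$.

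The bookkeeping with the cut-offs (using $\underline{q}_i<\hat{p}_i<\overline{q}_i$ to place the intervals correctly) is routine; the one point that needs care is the $\underline{q}$-part, where the natural candidate $\max(V_1,\mu_2)$ fails to be a supersolution at its kink, and one has to use the \emph{shifted} function $V_1+(\mu_2-\mu_1)$ instead --- the obstacle bound $G_2\le G_1+(\mu_2-\mu_1)$ is precisely what makes the shift work. As an alternative to the comparison argument, the same inequality $V_2\le V_1+(\mu_2-\mu_1)$ can be read directly off the stochastic representation \eqref{V-G} via $\sup_{\tau}\mathbb{E}[X_\tau+Y_\tau]\le\sup_{\tau}\mathbb{E}[X_\tau]+\sup_{\tau}\mathbb{E}[Y_\tau]$ applied with $Y_\tau=e^{-\rho\tau}(\mu_2-\mu_1)$.
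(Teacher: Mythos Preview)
Your proof is correct and follows essentially the same route as the paper: both arguments establish the sandwich $V_1\le V_2\le V_1+(\mu_2-\mu_1)$ via the obstacle inequality $G_1\le G_2\le G_1+(\mu_2-\mu_1)$, and then read off the inclusions $[\overline q_2,1]\subseteq[\overline q_1,1]$ and $[0,\underline q_1]\subseteq[0,\underline q_2]$. The only cosmetic differences are that you obtain $V_1\le V_2$ directly from the stochastic representation (the paper uses a viscosity subsolution argument) and that you show $V_1+(\mu_2-\mu_1)$ is a supersolution of the $\mu_2$-problem, whereas the paper equivalently shows $V_2-(\mu_2-\mu_1)$ is a subsolution of the $\mu_1$-problem.
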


\textit{Discussion:\ }When the discount rate $\rho $ increases, the width $%
\underline{q}-\overline{q}$ of the exploration region $\mathcal{E}$
decreases. {Hence, the DM tends to allocate less \textit{effort} in learning and processing information about the product 
$B$.} Here, the effort refers to the DM's willingness to
remain in the exploration region before stopping, as reflected by the \textit{length} of
$\mathcal E$. Analogous behavior occurs for larger information cost functions. 

If the volatility $\sigma $ is too high to extract useful information, the
DM will spend less effort in learning and prefers to make a product selection
faster. In contrast, the DM spends more effort in learning when $%
\sigma $ is low, as the signal contains more precise information about
product $B$.

{ When the value $\mu $\ of product $A$\ is higher
than the expected value of $B$, the DM is more reluctant to leave
the safe choice region $\mathcal{S}_{1}$ \ (since $\underline{q}$\
increases) versus choosing $B$ \ (since $\overline{q}$\
increases).}

{ In addition to the monotonicity properties in Proposition \ref{prop:constant} and Proposition \ref{prop:constant:mu}, we also derive various limiting results. Their proofs are provided in
the Electronic Companion.

\begin{proposition}
    \label{prop:constant:limit} Let $\hat p = \frac{\mu-l}{h-l}$ (cf. \eqref{eq:p}). The following assertions hold:
    \begin{enumerate}
        \item If $\rho \uparrow +\infty $, then $\underline{q}\rightarrow \hat p$ and $%
\overline{q}\rightarrow \hat p$. \label{prop:limit:i}
        \item If $\sigma \uparrow +\infty $, then $\underline{q}\rightarrow \hat p$ and $%
\overline{q}\rightarrow \hat p$.
        \item Let $0 \leq C _1 \le C (q) \le C _2$ (cf. Assumption \ref{ass:CI}). If $C _1 \uparrow +\infty $, then $\underline{q}\rightarrow \hat p$ and $%
\overline{q}\rightarrow \hat p$. \label{prop:limit:ii}
    \end{enumerate}
\end{proposition}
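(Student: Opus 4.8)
\textbf{Proof proposal for Proposition \ref{prop:constant:limit}.}

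The plan is to treat all three limits by the same template: in each case I would build an explicit continuous viscosity subsolution $u$ (to push $\underline q$ up and $\overline q$ down toward $\hat p$) and an explicit continuous viscosity supersolution $v$ (to push the endpoints in from outside) whose exploration regions shrink to $\{\hat p\}$ as the parameter diverges, and then invoke the comparison principle together with Lemma \ref{lem:comparison}. Since $V\ge G$ always, it suffices to show that for every $\varepsilon>0$ we can find a supersolution $v_\varepsilon$ with $v_\varepsilon = G$ on $[0,1]\setminus(\hat p-\varepsilon,\hat p+\varepsilon)$ once the parameter is large enough; by uniqueness this forces $V=G$ there, hence $\underline q\ge\hat p-\varepsilon$ and $\overline q\le\hat p+\varepsilon$. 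Combined with the fact (from Theorem \ref{thm:no-regret} and $V\ge G$) that $\underline q<\hat p<\overline q$ always — indeed $G(\hat p)=\mu$ is the corner of $G$ and $V$ is $\mathcal C^1$ at the boundary of $\mathcal E$ by Proposition \ref{prop:C1_solution}, so $\hat p$ cannot lie in $\mathcal S$ — this yields $\underline q\to\hat p$ and $\overline q\to\hat p$.

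For part (i), $\rho\uparrow\infty$: take a piecewise construction like the one in Proposition \ref{lem:supersolution}(i) but centered at $\hat p$ on the window $(\hat p-\varepsilon,\hat p+\varepsilon)$; outside the window set $v=G$, and inside interpolate by a smooth convex bump matching $G$ and its one-sided derivatives at $\hat p\pm\varepsilon$ with $v'$ having a jump of the correct sign (so test functions can only touch from below where $v$ is $\mathcal C^2$). On the window, $\rho v(q)-\tfrac12(\tfrac{h-l}{\sigma})^2q^2(1-q)^2 v''(q)+C(q)\ge \rho\mu - (\text{const depending on }\varepsilon,\sigma,h,l)$, which is $\ge 0$ as soon as $\rho$ exceeds a threshold $\rho(\varepsilon)$; the value $\rho$ only helps, so this is the easy direction. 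Part (iii), $C_1\uparrow\infty$: identical, with the lower bound now $\rho\mu + C_1 - (\text{const}(\varepsilon,\sigma,h,l))\ge 0$ for $C_1$ large; here even the constant cost rate alone dominates the diffusion term. Part (ii), $\sigma\uparrow\infty$: now the diffusion coefficient $(\tfrac{h-l}{\sigma})^2$ vanishes, so on the window $\rho v(q) - \tfrac12(\tfrac{h-l}{\sigma})^2 q^2(1-q)^2 v''(q) + C(q)\to \rho v(q)+C(q)>0$; again for $\sigma$ large the supersolution inequality holds on a fixed window.

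The main obstacle I expect is not the supersolution estimate itself but the geometric bookkeeping at the window endpoints $\hat p\pm\varepsilon$: one must ensure the interpolating bump is convex, agrees with $G$ in value at $\hat p\pm\varepsilon$ (so $v$ is continuous), strictly exceeds the needed differential inequality on the open window, and has one-sided derivatives at $\hat p\pm\varepsilon$ that make $v$ a genuine viscosity supersolution across those kinks (i.e., $v'_-(\hat p+\varepsilon)\ge G'(\hat p+\varepsilon)$ from the bump side and $v'_+(\hat p-\varepsilon)\le G'(\hat p-\varepsilon)$, so that no $\mathcal C^2$ function touches $v$ from below at a kink) — exactly the kind of delicate but routine construction already carried out in the proof of Proposition \ref{lem:supersolution}. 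A secondary point is handling the corner of $G$ at $\hat p$: since $G$ itself is only Lipschitz there, the bump must strictly dominate $\mu$ on a neighborhood of $\hat p$ and match the two linear pieces of $G$ on either side; choosing the bump to be, e.g., a shallow parabola $\mu + M(\varepsilon)\bigl(1-\tfrac1{\varepsilon^2}(q-\hat p)^2\bigr)$ with $M(\varepsilon)$ small handles this, provided $M(\varepsilon)$ is taken small enough that the derivative-matching inequalities at $\hat p\pm\varepsilon$ go the right way, and then large-parameter limits absorb the $M(\varepsilon)$-dependent constants. Once the construction is fixed for a given $\varepsilon$, sending the parameter to infinity and then $\varepsilon\downarrow 0$ closes the argument; Lemma \ref{lem:comparison} is invoked to turn the sandwich $G\le V\le v_\varepsilon$ into the inclusion of exploration regions.
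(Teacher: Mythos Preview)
Your overall strategy is correct and is essentially the paper's own approach: for each limit, build a continuous viscosity supersolution equal to $G$ outside an arbitrarily small window around $\hat p$, verify the differential inequality on the window once the parameter is large, and then use comparison together with Lemma \ref{lem:comparison} to squeeze $(\underline q,\overline q)$ toward $\hat p$. Your observation that $\hat p\in(\underline q,\overline q)$ always (since $V(\hat p)>G(\hat p)$ by Proposition \ref{lem:supersolution}(ii)) is also correct and needed.

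Where your proposal wobbles is in the concrete interpolant. The sample parabola you write, $\mu + M(\varepsilon)\bigl(1-\tfrac1{\varepsilon^2}(q-\hat p)^2\bigr)$, is \emph{concave} and equals $\mu$ at both endpoints $\hat p\pm\varepsilon$, whereas $G(\hat p+\varepsilon)=\mu+\varepsilon(h-l)>\mu$; so it neither matches $G$ on the right nor creates convex kinks, and at a concave kink a viscosity supersolution can fail (smooth test functions \emph{can} touch from below there with arbitrarily negative second derivative). Your own stated requirement that ``the interpolating bump is convex'' already rules this candidate out. The paper sidesteps all of this bookkeeping with a single \emph{upward} parabola and $\mathcal C^1$ matching: set
\[
U(q)=\begin{cases}\mu,&0\le q\le r,\\ \mu+M(q-r)^2,&r\le q\le p,\\ qh+(1-q)l,&p\le q\le 1,\end{cases}
\]
and impose $U(p)=ph+(1-p)l$, $U'(p)=h-l$. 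These two conditions force $p=\hat p+\tfrac{h-l}{4M}$ and $r=\hat p-\tfrac{h-l}{4M}$, so the window shrinks to $\{\hat p\}$ as $M\to\infty$; since $U''\equiv 2M$ on $(r,p)$, the inequality $\rho U-\tfrac12(\tfrac{h-l}{\sigma})^2q^2(1-q)^2U''+C\ge \rho\mu+C_1-\tfrac{M}{8}(\tfrac{h-l}{\sigma})^2$ is nonnegative once $\sigma$ (resp.\ $\rho$, resp.\ $C_1$) is large relative to the fixed $M$. Because $U\in\mathcal C^1$ there are no kinks to worry about, and $U\ge G$ on $(r,p)$ follows from the convexity of $U$ together with the tangency at $p$. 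Replace your concave bump with this construction and the rest of your outline goes through verbatim.
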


The proof is provided in the Electronic Companion.

Proposition \ref{prop:constant:limit} implies that $\hat p$ is a critical value that $\underline{q}$ and $\overline{q}$ converges to when $\rho,\sigma, C_1 \uparrow +\infty $.

\begin{proposition}
    \label{prop:constant-2} The following assertions hold:
    \begin{enumerate}
        \item If $l\uparrow \mu $, then $\overline{q}\downarrow 0$ and $\underline{q}%
\downarrow 0$. \label{prop-2:i}
        \item If $h\uparrow \infty $, then $\overline{q}\uparrow 1$ and $\underline{q%
}\downarrow 0$.\label{prop-2:ii}
    \end{enumerate}
\end{proposition}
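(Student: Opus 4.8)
\emph{Plan.} The four assertions split naturally: the two limits $\underline q\to0$ are immediate; $\overline q\downarrow0$ in (i) I would get from the interior equation plus the smooth‑fit data; and $\overline q\uparrow1$ in (ii), the substantive part, I would get from a probabilistic lower bound on $V-G$ at an arbitrary interior belief.

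\emph{The limits $\underline q\to0$.} From the proof of Theorem~\ref{thm:no-regret} one always has $\underline q\le\hat p=\frac{\mu-l}{h-l}$. As $l\uparrow\mu$ the numerator tends to $0$ with the denominator bounded away from $0$, and as $h\uparrow\infty$ the denominator tends to $\infty$ with the numerator fixed; in both cases $\hat p\to0$, and $0<\underline q\le\hat p$ forces $\underline q\to0$.

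\emph{The limit $\overline q\downarrow0$ as $l\uparrow\mu$.} On $(\underline q,\overline q)$ the value function solves the equation in the Corollary, so $V''(q)=\frac{2\sigma^{2}(\rho V(q)+C(q))}{(h-l)^{2}q^{2}(1-q)^{2}}\ge\frac{2\sigma^{2}(\rho\mu+C_{1})}{(h-l)^{2}}\,q^{-2}$, using $V\ge G\ge\mu$, $C\ge C_{1}$ and $(1-q)^{2}\le1$. Integrating over $(\underline q,\overline q)$ and invoking the smooth‑fit conditions $V'(\underline q)=0$, $V'(\overline q)=h-l$,
\[
h-l=\int_{\underline q}^{\overline q}V''(q)\,dq\ \ge\ \frac{2\sigma^{2}(\rho\mu+C_{1})}{(h-l)^{2}}\Big(\tfrac1{\underline q}-\tfrac1{\overline q}\Big),
\]
so $\tfrac1{\underline q}-\tfrac1{\overline q}\le K:=\tfrac{(h-l)^{3}}{2\sigma^{2}(\rho\mu+C_{1})}$, and $K$ stays bounded as $l\uparrow\mu$. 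Since $\tfrac1{\underline q}\ge\tfrac1{\hat p}=\tfrac{h-l}{\mu-l}\to\infty$, we get $\tfrac1{\overline q}\ge\tfrac1{\underline q}-K\to\infty$, i.e. $\overline q\to0$ (indeed $\overline q$ is squeezed down to the order of $\hat p$).

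\emph{The limit $\overline q\uparrow1$ as $h\uparrow\infty$.} It is enough to show that for every fixed $q_{0}\in(0,1)$ one has $V(q_{0})>G(q_{0})$, i.e. $q_{0}\in\mathcal E=(\underline q,\overline q)$, for all large $h$; together with $\underline q\to0$ this gives $\overline q\ge q_{0}$, and letting $q_{0}\uparrow1$ finishes. Fix $b\in(q_{0},1)$, set $a_{h}:=\hat p/2$ (so $a_{h}<\hat p<q_{0}<b$ for $h$ large), and let $\tau$ be the exit time of the belief process from $(a_{h},b)$ started at $q_{0}$. Testing this suboptimal rule in \eqref{V-G} with $C\le C_{2}$,
\[
V(q_{0})-G(q_{0})\ \ge\ \mathbb E\!\left[e^{-\rho\tau}G(q_{\tau})\right]-G(q_{0})-C_{2}\,\mathbb E[\tau].
\]
Writing $\phi(q):=l+(h-l)q$ and using $\mu=l+(h-l)\hat p$ gives $G(q)=\phi(q)+(h-l)(\hat p-q)_{+}$; since $q$ is a bounded martingale and $\tau<\infty$ a.s., optional stopping yields $\mathbb E[\phi(q_{\tau})]=\phi(q_{0})=G(q_{0})$ (last equality since $q_{0}>\hat p$), hence
\[
\mathbb E\!\left[e^{-\rho\tau}G(q_{\tau})\right]-G(q_{0})=(h-l)\,\mathbb E\!\left[e^{-\rho\tau}(\hat p-q_{\tau})_{+}\right]-\mathbb E\!\left[(1-e^{-\rho\tau})\phi(q_{\tau})\right].
\]
On $\{q_{\tau}=a_{h}\}$ one has $(\hat p-q_{\tau})_{+}=\hat p/2$, $(h-l)\hat p/2=\tfrac{\mu-l}2$ and $\mathbb P(q_{\tau}=a_{h})=\tfrac{b-q_{0}}{b-a_{h}}$, so using $e^{-\rho\tau}\ge1-\rho\tau$ the first term is $\ge\tfrac{\mu-l}2\big(\tfrac{b-q_{0}}{b-a_{h}}-\rho\,\mathbb E[\tau]\big)$. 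A deterministic time change reduces the belief diffusion to $d\tilde q=\tilde q(1-\tilde q)d\tilde W$, so $\mathbb E[\tau]=\tfrac{\sigma^{2}}{(h-l)^{2}}\mathbb E_{q_{0}}[\tilde\tau_{\mathrm{exit}(a_{h},b)}]$, and the standard exit‑time identity with partial fractions gives $\mathbb E_{q_{0}}[\tilde\tau_{\mathrm{exit}(a_{h},b)}]\le\tfrac{2b}{b-a_{h}}\big(\ln\tfrac1{a_{h}}+C(b)\big)=O(\ln h)$; hence $\mathbb E[\tau]=O(\ln h/h^{2})\to0$, so $C_{2}\mathbb E[\tau]\to0$, and since $\phi(q_{\tau})\le l+(h-l)b=O(h)$ and $1-e^{-\rho\tau}\le\rho\tau$, also $\mathbb E[(1-e^{-\rho\tau})\phi(q_{\tau})]\le(l+(h-l)b)\rho\,\mathbb E[\tau]=O(\ln h/h)\to0$. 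Collecting these, $\liminf_{h\to\infty}\big(V(q_{0})-G(q_{0})\big)\ge\frac{(\mu-l)(b-q_{0})}{2b}>0$, as required.

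\emph{Main obstacle.} The hard part is this last argument, and within it the choice of the lower barrier. It must be of the order of $\hat p\sim1/h$ so that the kink of $G$ at $\hat p$ lies strictly inside $(a_{h},b)$ and the exploration actually harvests the insurance value $\sim\mu-l$, yet not so small that the exit time — hence the discounting loss on the $O(h)$‑sized obstacle $\phi(q_{\tau})$ — fails to vanish. The choice $a_{h}=\hat p/2$ together with the \emph{logarithmic} (rather than reciprocal) bound on $\mathbb E_{q_{0}}[\tilde\tau_{\mathrm{exit}(a_{h},b)}]$ is what reconciles these competing requirements; verifying that all error terms are $o(1)$ while the gain stays bounded away from $0$ is the crux. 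We establish the convergence of $\underline q$ and $\overline q$ to the stated limits; the monotone direction indicated by the arrows can be obtained separately by comparison arguments in the spirit of Proposition~\ref{prop:constant}.
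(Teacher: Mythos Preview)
Your proof is correct, but it takes a genuinely different route from the paper's.

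The paper stays within its viscosity-solution toolkit throughout: for (i) it builds an explicit $\mathcal C^1$ supersolution $U$ equal to $G$ on $[p,1]$ with a quadratic piece on $(0,p)$, so that comparison forces $\overline q\le p=\sqrt{(\mu-l)/M}\to0$; for (ii) it constructs a piecewise-$C^1$ subsolution $\overline U=\max(\mu,U)$ with a carefully chosen second derivative $\varphi$ (large value $M$ near the origin, small value $m$ in the middle, zero beyond $p$), and then verifies the differential inequality region by region to conclude $\underline q\le\xi<\hat p\to0$ and $\overline q\ge p\to1$. The construction in (ii) is fairly delicate --- several constants have to be tuned independently of $h$.

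Your arguments bypass these constructions entirely. For (i) you exploit the interior ODE and smooth-fit directly: integrating the lower bound $V''(q)\ge\frac{2\sigma^2(\rho\mu+C_1)}{(h-l)^2}q^{-2}$ between the free boundaries yields the clean inequality $\tfrac1{\underline q}-\tfrac1{\overline q}\le K$ with $K$ bounded, which together with $\underline q\le\hat p\to0$ immediately pins $\overline q$. For (ii) you work probabilistically, testing the stopping rule ``exit $(a_h,b)$ with $a_h=\hat p/2$'' and using optional stopping on the linear part of $G$ to isolate the insurance value $(\mu-l)/2$ at the lower barrier; the time-change to the canonical diffusion $d\tilde q=\tilde q(1-\tilde q)d\tilde W$ and the logarithmic exit-time bound then make all error terms $o(1)$. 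This is arguably more transparent than the paper's subsolution: one sees directly why exploration pays (the kink of $G$ at $\hat p$) and why it becomes cheap (speed scales like $(h-l)^2$). The paper's approach, on the other hand, is self-contained within the PDE framework and does not rely on the Corollary's smooth-fit conditions, which your argument in (i) does. Both proofs, like the paper's, establish convergence rather than the monotonicity suggested by the arrows; your closing caveat matches the paper's own remark after the proposition.
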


We note that the above results are by no means trivial as the variables $l$
and $h$ appear in both the ``volatility''\ term $\left( \frac{h-l}{%
\sigma }\right) ^{2}$ and the obstacle term in (\ref{OP}).

Monotonicity of the cutoff points with respect to parameters $h$ and $l$
does not, in general, hold. At the moment, we can only show that $\underline{%
q}$ decreases when $h$ increases and that $\overline{q}$ decreases when $l$
increases.}

\subsection{Constant information acquisition cost}
\label{sec:constant} 
We revisit the well studied case of constant information cost and
piecewise linear reward function, rewritten below for convenience,
\begin{equation}
V(q)=\sup_{\tau \in \mathcal{T}^{Y}_0}\mathbb{E}\left[ \left. -\int_{0}^{\tau
}e^{-\rho t}C(q_{t})dt+e^{-\rho \tau }{G}(q_{\tau })\,\right\vert
q_{0}=q\right] ,  \label{V-constant}
\end{equation}
where
\begin{equation}
C(q)=C_{I}>0\text{ \ \ and \ }G(q)=\max \Big( \mu ,qh+(1-q)l\Big) .
\label{special form}
\end{equation}%
This case was analyzed in {\cite{keppo2008demand}} but we provide herein further
analytical results. 
To this end, under (\ref{special form}), the variational inequality (\ref{OP}) becomes 
\begin{equation*}
{\min }\left( \rho V(q)-\frac{1}{2}\left( \frac{h-l}{\sigma }\right)
^{2}q^{2}(1-q)^{2}V^{\prime \prime }(q)+C_{I},V(q)-\max \Big( \mu
,qh+(1-q)l \Big)\right) =0.
\end{equation*}%
It is easy to deduce that the ODE
\begin{equation*}
\rho V(q)-\frac{1}{2}\left( \frac{h-l}{\sigma }\right)
^{2}q^{2}(1-q)^{2}V^{\prime \prime }(q)+C_{I}=0,
\end{equation*}%
has a general solution of the form%
\begin{equation*}
V(q)=c_{1}v_{1}(q)+c_{2}v_{2}(q)-\frac{C_{I}}{\rho },
\end{equation*}%
where 
\begin{equation*}
v_{1}(q)=\frac{(1-q)^{\frac{k+1}{2}}}{q^{\frac{k-1}{2}}}\text{ \ \ and \ \ }%
v_{2}(q)=\frac{q^{\frac{k+1}{2}}}{(1-q)^{\frac{k-1}{2}}}
\end{equation*}
and
\begin{equation}
\label{eqn:k}
    \text{ \ }k=\sqrt{%
1+8\rho \left(\frac{\sigma}{h-l}\right)^{2}}.
\end{equation}%
The constants $c_{1},c_{2}$ are two free parameters determined from (\ref%
{Stephan}). Thus, we must have the conditions 
\begin{equation*}
\begin{cases}
-\frac{C_{I}}{\rho }+c_{1}v_{1}(\underline{q})+c_{2}v_{2}(\underline{q}) & 
=\mu , \\ 
c_{1}v_{1}^{\prime }(\underline{q})+c_{2}v_{2}^{\prime }(\underline{q}) & =0,
\\ 
-\frac{C_{I}}{\rho }+c_{1}v_{1}(\overline{q})+c_{2}v_{2}(\overline{q}) & =%
\overline{q}h+(1-\overline{q})l, \\ 
c_{1}v_{1}^{\prime }(\overline{q})+c_{2}v_{2}^{\prime }(\overline{q}) & =h-l.%
\end{cases}%
\end{equation*}
After specifying the special form \eqref{special form}, semi-explicit solutions may be found which can be, in turn, used to perform sensitivity analysis and, also, investigate the limiting behavior of the free boundary and the value function. Because the calculations are rather tedious, we choose not to carry out this analysis here but, rather, analyze similar systems in the next section where we study the general problem.

\section{The general model: costly sequential information acquisition and (ir)reversibility of decisions}\label{sec:reversible_decisions}

We provide an extensive analysis of the new model introduced in Section \ref{sec:nested}, which
incorporates both \textit{sequential decisions} and intertemporal \textit{%
differential information} sources. For tractability, we make the following simplifying model assumption if
product $B$ is chosen:
\begin{eqnarray}\label{eq:simplicifation}
    \widetilde{R}\equiv R>0, \quad \widetilde{C}\equiv0,  \quad \widetilde{m}(q) = q h + (1-q)l.
\end{eqnarray}
Variations and relaxations of this setting are discussed in Section~\ref{sec:conclusion}, which are left for future research.

With the simplification in \eqref{eq:simplicifation}, following Definition~\ref{def:nested}, the value function of the the framework follows
\begin{equation}
V(q)=\sup_{\tau \in \mathcal{T}^{Y}_0}\mathbb{E}\left[ \left. -\int_{0}^{\tau
}e^{-\rho t}C(q_{t})dt+e^{-\rho \tau }V_1 \left( q_{\tau
}\right) \Big) \right\vert q_{0}=q\right] ,\text{ \ }q\in \left[ 0,1\right]
.  \label{V-general}
\end{equation}%
where $V_1(q)=\max(V_1^A(q),V_1^B(q)),$  $V_1^A(q)= \mu$, and $V_1^B(q) = U^{B}({q})$ with the nested value
function defined by
\begin{equation}
U^{B}(\widetilde{q})={\sup_{\widetilde{\tau }\in \mathcal{T}^{\widetilde{Y}}_0}}\,%
\mathbb{\widetilde{E}}\left[ \left. \int_{0}^{\widetilde{\tau }}\rho e^{-\rho
t}\left( \widetilde{q}_{t}h+(1-\widetilde{q}_{t})l\right) \,dt+e^{-\rho 
\widetilde{\tau }}\left( \mu -R\right) \,\,\right\vert \,\,\widetilde{q}_0=%
\widetilde{q}\right] ,\text{ \ \ }\widetilde{q}\in \left[ 0,1\right].
\label{V-B-dfn}
\end{equation}%
The running term $\rho e^{-\rho
t}\left( \widetilde{q}_{t}h+(1-\widetilde{q}_{t})l\right) $ models the
discounted rate (per unit of time) of utility from using product $B,$ free
of any information acquisition costs. The term $e^{-\rho \widetilde{\tau }%
}\left( \mu -R\right) $ represents the discounted payoff received at
stopping time $\widetilde{\tau},$ which consists of the known payoff $\mu $
minus the exchange fee $R.$ To make the problem non-trivial, we assume
that 
$$0<l< \mu -R.$$

We study two types of informational signals in the second
exploratory regime associated with $U^B$: a {\it Poisson} signal, which provides the updated information
through its jumps (see Section \ref{sec:poisson}), and a {\it Gaussian} signal, which provides similar information
as the one used in Section \ref{sec:framework} but with a smaller variance (see Section \ref{sec:higher accuracy}).

\subsection{Case 1: Sequential information signal of Poisson type}\label{sec:poisson}

Information acquisition problems with Poisson-type signals have been
proposed, among others, in \cite{horner2017learning,keller2010strategic}. Herein, we consider the case that the belief process $ \widetilde{q}$ follows the dynamics
\begin{equation}\label{eq:belief-onlyPoisson}
d\widetilde{q}_{t}=(1-\widetilde{q}_{t})\,d\,J_{t}^{1}(\lambda \widetilde{q}%
_{t})+(-\widetilde{q}_{t})\,d\,J_{t}^{0}(\lambda (1-\widetilde{q}_{t})),\ \ 
\widetilde{q}_{0}=\widetilde{q}\in \left[ 0,1\right] ,
\end{equation}%
where $J^{1}(\lambda \widetilde{q}_{t})$ and $J^{0}(\lambda (1-\widetilde{q}%
_{t}))$ are independent Poisson counting processes with respective intensity
rates $\lambda \widetilde{q}_{t}$ and $\lambda (1-\widetilde{q}_{t}),$ $%
\lambda >0,$ and $J_{0}^{0}=J_{0}^{1}=0$. We may think about these rates as
\begin{equation*}
\lambda (1-\widetilde{q}_t)dt=\mathbb{\widetilde{P}}\left[ \left.
J_{t+dt}^{0}-J_{t}^{0}=1\right\vert \mathcal{F}_{t}^{\widetilde{Y}}\right] 
\text{ \ and \ }\lambda \widetilde{q}_t dt=\mathbb{\widetilde{P}}\left[ \left.
J_{t+dt}^{1}-J_{t}^{1}=1\right\vert \mathcal{F}_{t}^{\widetilde{Y}}\right] .
\end{equation*}%

In other words, the belief process will jump immediately to state $1$ (resp.
state $0$)\ when the Poisson signal arrives with the true information $%
\Theta =h$ (resp. $\Theta =l)$. We introduce an auxiliary constant, which will be used frequently throughout this section,

\begin{equation}
\widetilde{l}:=\frac{\rho }{\rho +\lambda }l+\frac{\lambda }{\rho +\lambda }%
(\mu -R).  \label{l-revised}
\end{equation}

\begin{proposition}
\label{prop:VB-poisson}
    The nested continuation value function $U^B(%
\widetilde{q}),$ $\widetilde{q}\in \left[ 0,1\right] $, is the unique continuous
solution to the variational inequality%
\begin{equation}
{\min }\Bigg( \rho U^B(\widetilde{q})-\lambda \Big(\widetilde{q}h+(1-\widetilde{q})(\mu
-R)-U^B(\widetilde{q})\Big)-\rho \Big( \widetilde{q}h+(1-\widetilde{q})l\Big)
,\, U^B(\widetilde{q})-\left( \mu -R\right) \Bigg) =0,  \label{HJB-Poisson}
\end{equation}%
with boundary conditions $U^B(0)=\mu -R$ and $U^B(1)=h$. It is given by 
\begin{equation}
U^B(\widetilde{q})=%
\begin{cases}
\mu -R,\quad  & \widetilde{q}\leq q_{B}, \\ 
\widetilde{q}h+(1-\widetilde{q})\widetilde{l},\quad  & \widetilde{q}>q_{B},%
\end{cases}%
\text{ }  \label{V-B}
\end{equation}%
with $\widetilde{l}$ as in (\ref{l-revised}) and 
\begin{equation*}
q_{B}=\rho \frac{\mu -l -R}{\lambda (\left( h-\mu \right) +R)+\rho (h-l)}.
\end{equation*}%
Furthermore, 
\begin{equation}
\lim_{\lambda \downarrow 0}q_{B}=\frac{ \mu -l -R}{h-l}\text{
\ and \ \ }\lim_{\lambda \uparrow \infty }q_{B}=0\text{,}  \label{lambda=limits}
\end{equation}%
and%
\begin{equation}
\widetilde{l}>l\text{ \ \ and \ \ }q_{B}<\hat{p},  \label{lambda-inequality}
\end{equation}%
with $\hat{p}$ as in \eqref{eq:p}.
\end{proposition}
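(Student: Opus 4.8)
My plan is to (a) pin down the absorbing-state values $V_{B}(0)$ and $V_{B}(1)$; (b) derive \eqref{HJB-Poisson} from a dynamic programming argument for the pure-jump belief process \eqref{eq:belief-onlyPoisson}; (c) observe that \eqref{HJB-Poisson} collapses to an elementary pointwise algebraic identity, which yields uniqueness and the explicit form \eqref{V-B} once the crossover point $q_{B}$ is identified; and (d) read off the limits and inequalities from the closed form of $q_{B}$. For the boundary values: at $\widetilde{q}=1$ the belief is frozen at $1$, so the only decision is the stopping time, and since $h>\mu$ together with $R>0$ give $h>\mu-R$, it is never worth stopping and letting $\widetilde{\tau}\uparrow\infty$ yields $V_{B}(1)=\int_{0}^{\infty}\rho e^{-\rho t}h\,dt=h$. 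At $\widetilde{q}=0$ the belief is frozen at $0$ and, since $l<\mu-R$ by assumption, the total reward $l+e^{-\rho s}(\mu-R-l)$ obtained by stopping at a fixed time $s$ is decreasing in $s$, so $\widetilde{\tau}^{\ast}=0$ is optimal and $V_{B}(0)=\mu-R$.

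Next I would derive \eqref{HJB-Poisson}. The process \eqref{eq:belief-onlyPoisson} is a pure-jump Markov process whose generator acts on a function $f$ by $\mathcal{L}f(\widetilde{q})=\lambda\widetilde{q}\,(f(1)-f(\widetilde{q}))+\lambda(1-\widetilde{q})\,(f(0)-f(\widetilde{q}))$; until its first jump --- which occurs at an $\mathrm{Exp}(\lambda)$ time --- the state remains at $\widetilde{q}$, and at that jump it is absorbed at $1$ with probability $\widetilde{q}$ and at $0$ with probability $1-\widetilde{q}$. The dynamic programming principle for optimal stopping of this process, after substituting $f(1)=V_{B}(1)=h$ and $f(0)=V_{B}(0)=\mu-R$ into $\mathcal{L}V_{B}$, gives precisely \eqref{HJB-Poisson}. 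Since $\mathcal{L}$ is nonlocal and involves no derivatives, \eqref{HJB-Poisson} is a \emph{pointwise} relation in $V_{B}(\widetilde{q})$, so ``viscosity solution'' here means nothing more than ``continuous function satisfying \eqref{HJB-Poisson} at every point.'' I expect this step --- making the optimal stopping / variational inequality correspondence rigorous for the jump model, including the absorbing-state substitution and the attendant integrability --- to be the only point demanding genuine care; the remainder is algebra.

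I would then solve \eqref{HJB-Poisson} directly. Setting its first argument equal to zero and solving the resulting linear equation for $V_{B}(\widetilde{q})$ gives $V_{B}(\widetilde{q})=\Phi(\widetilde{q}):=\frac{\rho(\widetilde{q}h+(1-\widetilde{q})l)+\lambda(\widetilde{q}h+(1-\widetilde{q})(\mu-R))}{\rho+\lambda}=\widetilde{q}h+(1-\widetilde{q})\widetilde{l}$ with $\widetilde{l}$ as in \eqref{l-revised}; moreover the first argument of the minimum equals $(\rho+\lambda)(V_{B}(\widetilde{q})-\Phi(\widetilde{q}))$, so \eqref{HJB-Poisson} is equivalent to $V_{B}(\widetilde{q})=\max(\mu-R,\Phi(\widetilde{q}))$, which has a unique, automatically continuous, solution --- existence and uniqueness at once. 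Since $\Phi$ is affine and strictly increasing with $\Phi(0)=\widetilde{l}<\mu-R<h=\Phi(1)$ --- the inequality $l<\widetilde{l}<\mu-R$ holding because $\widetilde{l}$ is a strict convex combination of $l$ and $\mu-R>l$ --- there is a unique $q_{B}\in(0,1)$ with $\Phi(q_{B})=\mu-R$; solving $q_{B}h+(1-q_{B})\widetilde{l}=\mu-R$ and simplifying with \eqref{l-revised} gives $q_{B}=\rho\frac{\mu-l-R}{\lambda((h-\mu)+R)+\rho(h-l)}$, and \eqref{V-B} follows. If desired, optimality can be reconfirmed by a verification argument: $e^{-\rho t}V_{B}(\widetilde{q}_{t})+\int_{0}^{t}\rho e^{-\rho s}(\widetilde{q}_{s}h+(1-\widetilde{q}_{s})l)\,ds$ is a bounded supermartingale, and a martingale up to $\widetilde{\tau}^{\ast}:=\inf\{t\geq0:\widetilde{q}_{t}\leq q_{B}\}$, so optional stopping yields $V_{B}(\widetilde{q})\geq$ the objective in \eqref{V-B-dfn} for every $\widetilde{\tau}\in\mathcal{T}_{\widetilde{Y}}$, with equality at $\widetilde{\tau}^{\ast}$.

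Finally, the three remaining assertions follow from the closed form of $q_{B}$. As $\lambda\downarrow0$ its denominator tends to $\rho(h-l)$, so $q_{B}\to\frac{\mu-l-R}{h-l}$; as $\lambda\uparrow\infty$ the denominator diverges while the numerator is fixed, so $q_{B}\to0$. The inequality $\widetilde{l}>l$ is the strict-convex-combination observation already used. For $q_{B}<\hat{p}=\frac{\mu-l}{h-l}$, cross-multiplying (both denominators are positive) reduces the claim to $-\rho(h-l)R<\lambda(\mu-l)(h-\mu+R)$, which is clear since the left-hand side is negative and the right-hand side positive; equivalently, $q_{B}$ is strictly decreasing in $\lambda$ with $\lim_{\lambda\downarrow0}q_{B}=\frac{\mu-l-R}{h-l}<\frac{\mu-l}{h-l}=\hat{p}$.
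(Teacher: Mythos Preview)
Your proposal is correct and follows the same line as the paper: establish the absorbing-state values, obtain \eqref{HJB-Poisson} by substituting $V_B(1)=h$ and $V_B(0)=\mu-R$ into the jump generator, and then read off the explicit solution and the asymptotics. In fact you supply considerably more detail than the paper's own proof---in particular, the clean observation that \eqref{HJB-Poisson} is equivalent to the pointwise identity $V_B(\widetilde q)=\max(\mu-R,\Phi(\widetilde q))$, which immediately gives existence, uniqueness, and the closed form---whereas the paper simply declares that the boundary conditions ``follow directly,'' records the generator identity, and states that the remaining assertions are ``obvious.''
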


\begin{proof}
The boundary conditions $U^B(0)=\mu -R$ and $U^B(1)=h$
follow directly. The form of equation (\ref{HJB-Poisson}) also follows,
observing that 
\begin{equation*}
\lambda \widetilde{q}\left( U^B(1)-U^B(\widetilde{q})\right) +\lambda (1-%
\widetilde{q})\left( U^B(0)-U^B(\widetilde{q})\right) =\lambda (%
\widetilde{q}h+(1-\widetilde{q})(\mu -R)-U^B(\widetilde{q})).
\end{equation*}%
Assertions (\ref{lambda=limits})\ and (\ref{lambda-inequality}) are obvious. \end{proof}

Given $U^B$ as established in Proposition \ref{prop:VB-poisson}, we define the value function $V$ (cf. (\ref{V-general})) with Poisson-type signals, by
\begin{equation}
V(q)=\sup_{\tau \in \mathcal{T}^{Y}_0}\mathbb{E}\left[ \left. -\int_{0}^{\tau
}e^{-\rho t}C(q_{t})dt+e^{-\rho \tau }V_1(q_{\tau })\,\right\vert
q_{0}=q\right] ,  \label{V-Poisson}
\end{equation}%
where $V_1(q)=\max \left( \mu ,\,V_1^B(q)\right)$, 
with $\,V_1^B(q) = U^B(\widetilde{q})$ as in (\ref{V-B}).
Note that because of the structure of $U^B$ as in (\ref{V-B}), the nested value function $V_1(q)$ has a piecewise linear form analogous to the obstacle in the irreversible decision problem, where the payoff
\(
G(q)=\max\{\mu,\; qh+(1-q)l\}.
\)
The only difference is that the constant $l$, representing the low payoff of
product~$B$ in the irreversible case, is replaced by
$\widetilde l$ defined in \eqref{l-revised}.
As a consequence, the analytical results established in
Proposition~\ref{prop:constant-2} apply directly in the present setting. See Figure \ref{fig:poisson} for illustration. 
\begin{figure}[H]
    \centering
    \includegraphics[width=13cm]{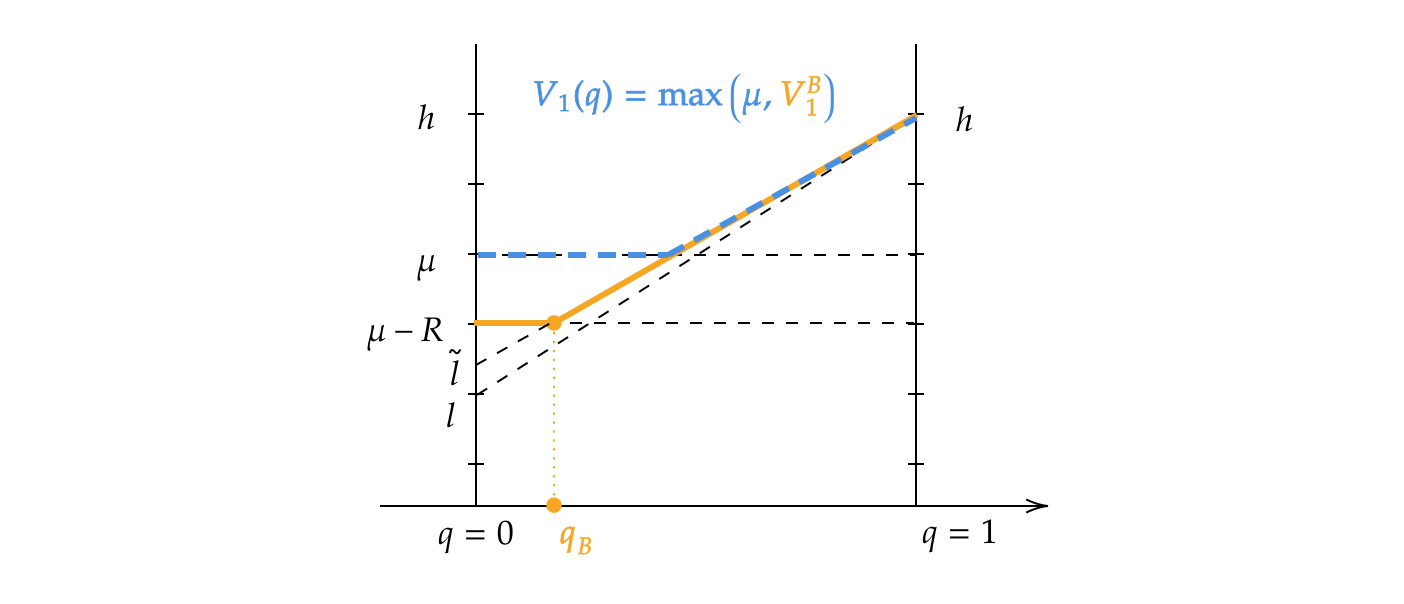}
    \caption{$V_1^B$ defined in \eqref{V-B} (in orange) v.s. $V_1$ defined in \eqref{G-general} (in dashed blue).}
    \label{fig:poisson}
\end{figure}
We stress, however,that while problems (\ref{V-Poisson}) and (\ref{V-constant})\ appear technically similar, they are generated by very different decision making models.

\subsubsection{Constant information cost}

To derive various properties of the value function $V(q), \ q \in[0,1]$ and compare them
to their analogues in the irreversible case that has been so far studied, we set $%
C(q)=C_{I}>0$. Then, 
\eqref{V-general} becomes
\begin{equation}
{\min }\left( \rho V(q)-\frac{1}{2}\left( \frac{h-l}{\sigma }\right)
^{2}q^{2}(1-q)^{2}V^{\prime \prime }\left( q\right) +C_{I},V(q)-\max \Big(
\mu ,\,V_1^B(q)\Big) \right) =0,  \label{HJB-Poisson-integrated}%
\end{equation}
with $V_1^B$ as in \eqref{V-B}.

\begin{theorem}%
\label{thm:regret}
Let the value function $V$ be as in \eqref{V-Poisson}. There exists a unique pair $(\underline{q},\overline{q}),$
with $0<\underline{q}<\overline{q}<1,$ such that $V(q)\in 
\mathcal{C}^{1}([0,1])$ and satisfies 
\begin{equation*}
V(q)=%
\begin{cases}
\mu ,\quad  & q\leq \underline{q}, \\ 
d_{1}q^{\frac{1-k}{2}}(1-q)^{\frac{1+k}{2}}+d_{2}(1-q)^{%
\frac{1-k}{2}}q^{\frac{1+k}{2}}{-}\frac{C_{I}}{\rho }%
,\quad  & \underline{q}\leq q\leq \overline{q}, \\ 
qh+(1-q)\widetilde{l},\quad  & \overline{q}\leq q,%
\end{cases}%
\end{equation*}%
with $\widetilde{l}$ defined in \eqref{l-revised}, $k$ as in \eqref{eqn:k}, 
\begin{equation*}
d_{1}=\frac{\frac{1+k}{2}-\underline{q}}{k(1-\underline{q})^{\frac{1+k}{2}}\underline{q}^{\frac{1-k}{2}}}\left( { \mu} +%
\frac{C_{I}}{\rho }\right) \text{\  \ and } \ \ 
d_{2}=-\frac{\frac{1-k}{2}-\underline{q} }{k(1-\underline{q})^{\frac{1-k}{2}}%
\underline{q}^{\frac{1+k}{2}}}\left( { \mu}+\frac{C_{I}}{\rho }%
\right).
\end{equation*}%
\end{theorem}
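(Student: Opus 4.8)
The plan is to follow the same viscosity-theoretic route already established for the irreversible case in Theorem~\ref{thm:no-regret} and its Corollary, observing that the obstacle $\widetilde G(q)=\max(\mu,V_B(q))$ has exactly the same qualitative shape as $G(q)=\max(\mu,qh+(1-q)l)$ — a constant branch equal to $\mu$ on an initial interval, followed by the affine branch $qh+(1-q)\widetilde l$ for $q>q_B$ — because of the explicit form of $V_B$ in \eqref{V-B}. First I would check that $\widetilde G$ satisfies the hypotheses imposed on obstacles in Assumption~\ref{ass:CI}: it is continuous, piecewise affine hence Lipschitz, convex (the slope jumps up from $0$ to $h-\widetilde l>0$ at $q_B$, using $\widetilde l<h$), and non-decreasing. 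Since $C(q)=C_I$ is constant and trivially satisfies part~i), all the structural results of Section~\ref{sec:framework} apply verbatim to \eqref{V-Poisson}. In particular, Proposition~\ref{lem:supersolution} gives $\mathcal S,\mathcal E\neq\emptyset$; Theorem~\ref{thm:no-regret} gives the existence of cutoffs $0<\underline q<\overline q<1$ with $V=\widetilde G$ outside $(\underline q,\overline q)$, $V>\widetilde G$ inside, and $V$ convex and non-decreasing; Proposition~\ref{prop:C1_solution} gives $V\in\mathcal C^1(\partial\mathcal E)\cap\mathcal C^2(\mathrm{int}\,\mathcal E)$; and the Corollary gives that $V$ is the unique Lipschitz solution of the Stefan-type free-boundary problem \eqref{Stephan} with $l$ replaced by $\widetilde l$ in the right endpoint data, i.e. $V(\underline q)=\mu$, $V(\overline q)=\overline q h+(1-\overline q)\widetilde l$, $V'(\underline q)=0$, $V'(\overline q)=h-\widetilde l$.

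Next I would solve the linear ODE $\rho V-\tfrac12(\tfrac{h-l}{\sigma})^2 q^2(1-q)^2 V''+C_I=0$ on $(\underline q,\overline q)$ explicitly, exactly as in Section~\ref{sec:constant}: the general solution is $V(q)=d_1 v_1(q)+d_2 v_2(q)-C_I/\rho$ with $v_1(q)=q^{(1-k)/2}(1-q)^{(1+k)/2}$, $v_2(q)=q^{(1+k)/2}(1-q)^{(1-k)/2}$ and $k=\sqrt{1+8\rho(\sigma/(h-l))^2}$ as in \eqref{eqn:k}. Imposing the two conditions at $\underline q$, namely $V(\underline q)=\mu$ and $V'(\underline q)=0$, gives a linear $2\times2$ system for $(d_1,d_2)$. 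A short computation using $v_1'(q)=q^{-(1+k)/2}(1-q)^{(k-1)/2}\big(\tfrac{1-k}{2}-q\big)$ and $v_2'(q)=q^{(k-1)/2}(1-q)^{-(1+k)/2}\big(\tfrac{1+k}{2}-q\big)$, together with the Wronskian-type identity $v_1(q)v_2'(q)-v_1'(q)v_2(q)=k$, yields the closed forms for $d_1$ and $d_2$ stated in the theorem; this is the routine calculation I would not grind through in detail but would indicate. The remaining two conditions at $\overline q$ — value-matching $d_1 v_1(\overline q)+d_2 v_2(\overline q)-C_I/\rho=\overline q h+(1-\overline q)\widetilde l$ and smooth-fit $d_1 v_1'(\overline q)+d_2 v_2'(\overline q)=h-\widetilde l$ — then pin down the pair $(\underline q,\overline q)$ implicitly; existence and uniqueness of this pair need not be re-derived from the transcendental system since they already follow from Theorem~\ref{thm:no-regret} and the Corollary applied to the present obstacle.

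The main obstacle is making precise that the abstract results of Section~\ref{sec:framework} are genuinely applicable here, i.e. verifying that $\widetilde G$ is convex and non-decreasing and strictly positive, since those were standing assumptions (Assumption~\ref{ass:CI}~ii)) rather than conclusions; the only delicate point is the convexity at the kink $q_B$, which holds because the slope increases there ($0<h-\widetilde l$ thanks to $\widetilde l<h$, which in turn follows from $\widetilde l=\tfrac{\rho}{\rho+\lambda}l+\tfrac{\lambda}{\rho+\lambda}(\mu-R)<h$ using $l<h$ and $\mu-R<\mu<h$). Once this is in place, everything else is bookkeeping: translate each cited result with $l\mapsto\widetilde l$ in the obstacle's affine branch, extract the explicit ODE solution, and solve the boundary conditions at $\underline q$ for $(d_1,d_2)$. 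I would also remark, as the paper does after the theorem, that the free boundary $(\underline q,\overline q)$ cannot be obtained in closed form but that its qualitative properties and limiting behavior transfer directly from Propositions~\ref{prop:constant}--\ref{prop:constant-2} with $l$ replaced by $\widetilde l$.
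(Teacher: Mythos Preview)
Your proposal is correct and follows essentially the same route as the paper: verify that $\widetilde G(q)=\max(\mu,V_B(q))=\max(\mu,qh+(1-q)\widetilde l)$ has the same piecewise-affine structure as the irreversible obstacle, invoke the general theory of Section~\ref{sec:framework} (the paper just writes ``working as in Theorem~\ref{thm:no-regret}''), write down the explicit ODE solution, and solve the two smooth-fit conditions at $\underline q$ for $(d_1,d_2)$. One small inaccuracy: the kink of $\widetilde G$ is not at $q_B$ but at $(\mu-\widetilde l)/(h-\widetilde l)$, since $V_B(q_B)=\mu-R<\mu$ means $\widetilde G$ is still equal to $\mu$ in a neighborhood of $q_B$; this does not affect your convexity or monotonicity checks, which go through as written.
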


\begin{proof}
 The ODE%
\begin{equation}
\label{eqn:constant:sol-onlyPoisson}
\rho V-\frac{(h-l)^{2}}{2\sigma ^{2}}q^{2}(1-q)^{2}V^{\prime \prime }+C_{I}=0
\end{equation}%
has a general solution given by 
\begin{equation}\label{eqn:ode_general_solution}
V(q)=d_{1}v_{1}(q)+d_{2}v_{2}(q)-\frac{C_{I}}{\rho },
\end{equation}%
where $d_{1},d_{2}$ are parameters to be determined, and 
\begin{equation*}
v_{1}(q)=q^{\frac{1-k}{2}}(1-q)^{\frac{1+k}{2}}\quad \text{and \ \ }
v_{2}(q)=q^{\frac{1+k}{2}}(1-q)^{\frac{1-k}{2}},
\end{equation*}%
with $k$ as in \eqref{eqn:k}. Working as in the proof of Theorem \ref{thm:no-regret}, 
we deduce that there exist points $\underline{q},\overline{q}$ with $0<%
\underline{q}<\overline{q}<1$, such that $V(q)=\mu ,$ for $q\in \lbrack 0,%
\underline{q}]$; $V(q)=qh+(1-q)\,\widetilde{l},$ for $q\in \lbrack \overline{%
q},1]$, and that $V(q)$ satisfies the ODE \eqref{eqn:constant:sol-onlyPoisson}
for $q\in \lbrack \underline{q},\overline{q}]$. Therefore, by the smooth-fit
principle, $d_{1},d_{2},\underline{q},\overline{q}$ must satisfy

\begin{equation*}
\begin{cases}
d_{1}v_{1}(\underline{q})+d_{2}v_{2}(\underline{q})-\frac{C_{I}}{\rho } & 
={ \mu}, \\ 
d_{1}v_{1}^{\prime }(\underline{q})+d_{2}v_{2}^{\prime }(\underline{q}) & =0,
\\ 
d_{1}v_{1}(\overline{q})+d_{2}v_{2}(\overline{q})-\frac{C_{I}}{\rho } & =%
\overline{q}h+(1-\overline{q})\,\widetilde{l}, \\ 
d_{1}v_{1}^{\prime }(\overline{q})+d_{2}v_{2}^{\prime }(\overline{q}) & =h-\,%
\widetilde{l}.%
\end{cases}%
\end{equation*}%
Technically, two of the four equations above are sufficient to write $d_1,d_2$ in terms of $\overline{q},\underline{q}$. For simplicity, we write both $d_1$ and $d_2$ in terms of $\underline{q}$,  which utilizes the first two equations. To simplify the notation, denote $m:=\frac{1-k}{2}$. Then, the general
solution  \eqref{eqn:ode_general_solution} can be written as 
\begin{equation*}
V(q)=d_{1}q^{m}(1-q)^{-m+1}+d_{2}(1-q)^{m}q^{-m+1}-\frac{C_{I}}{\rho }.
\end{equation*}%
In turn, 
\begin{eqnarray*}
V^{\prime }(q) &=&d_{1}mq^{m-1}(1-q)^{-m+1}+d_{1}(m-1)q^{m}(1-q)^{-m} \\
&&+d_{2}(-m)(1-q)^{m-1}q^{-m+1}+d_{2}(-m+1)(1-q)^{m}q^{-m} \\
&=&d_{1}q^{m-1}(1-q)^{-m}\left( m(1-q)+(m-1)q\right)
+d_{2}(1-q)^{m-1}q^{-m}\left( -mq+(-m+1)(1-q)\right)  \\
&=&d_{1}q^{m-1}(1-q)^{-m}(m-q)+d_{2}(1-q)^{m-1}q^{-m}\left( -m+1-q\right) .
\end{eqnarray*}%
At point $\underline{q}$, we must have
\begin{eqnarray}
&&d_{1}\underline{q}^{m}(1-\underline{q})^{-m+1}+d_{2}(1-\underline{q})^{m}%
\underline{q}^{-m+1}={ \mu} +\frac{C_{I}}{\rho },  \label{eq:poisson:left1} \\
&&d_{1}(m-\underline{q})\underline{q}^{m-1}(1-\underline{q}%
)^{-m}+d_{2}\left( -m+1-\underline{q}\right) (1-\underline{q})^{m-1}%
\underline{q}^{-m}=0.  \label{eq:poisson:left2}
\end{eqnarray}%
From \eqref{eq:poisson:left2}, we then obtain
\begin{equation}
d_{1}=-d_{2}\frac{-m+1-\underline{q}}{m-\underline{q}}\frac{(1-\underline{q}%
)^{m-1}\underline{q}^{-m}}{\underline{q}^{m-1}(1-\underline{q})^{-m}}=-d_{2}%
\frac{-m+1-\underline{q}}{m-\underline{q}}\frac{(1-\underline{q})^{2m-1}}{%
\underline{q}^{2m-1}}.
\label{eq:poisson:left3}
\end{equation}%
Plugging the above equation into \eqref{eq:poisson:left1} gives 
\begin{equation*}
-d_{2}\frac{-m+1-\underline{q}}{m-\underline{q}}\frac{(1-\underline{q}%
)^{2m-1}}{\underline{q}^{2m-1}}\underline{q}^{m}(1-\underline{q}%
)^{-m+1}+d_{2}(1-\underline{q})^{m}\underline{q}^{-m+1}={ \mu} +\frac{C_{I}}{%
\rho }.
\end{equation*}%
By direct computation, we deduce 
\begin{equation*}
-d_{2}\frac{-m+1-\underline{q}}{m-\underline{q}}(1-\underline{q}%
)^{m}q^{-m+1}+d_{2}(1-\underline{q})^{m}(\underline{q})^{-m+1}={ \mu} +\frac{%
C_{I}}{\rho }.
\end{equation*}%
Hence $d_{2}\left( 1-\frac{-m+1-\underline{q}}{m-\underline{q}}\right) (1-%
\underline{q})^{m}\underline{q}^{-m+1}={ \mu} +\frac{C_{I}}{\rho }$ and, in
turn, 
\begin{equation}
d_{2}=\left( { \mu} +\frac{C_{I}}{\rho }\right) \frac{m-\underline{q}}{2m-1}%
(1-\underline{q})^{-m}\underline{q}^{m-1}.
\label{eq:poisson:c2_q1}
\end{equation}%
Plugging \eqref{eq:poisson:c2_q1} into \eqref{eq:poisson:left3}, gives 
\begin{equation*}
d_{1}=-\left( { \mu} +\frac{C_{I}}{\rho }\right) \frac{-m+1-\underline{q}}{%
2m-1}(1-\underline{q})^{m-1}\underline{q}^{-m}.
\end{equation*}
\end{proof}

Next, we investigate how the exchange fee $R$ affects the actions of the
DM\ in the first exploration period $[0,\tau^\ast]$. 

\newcommand{\tl}{\widetilde{l}}
\newcommand{\tG}{V_1}

\begin{proposition}
\label{prop:constant:cs}
Let the model input values of $l$, $\mu $, $h$, $\rho $, $C_{I}$, $%
\sigma $, and $\lambda $ be fixed. If the exchange fee $R$ increases, then both $%
\underline{q}$ and $\overline{q}$ increase. 
\end{proposition}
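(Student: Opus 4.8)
The plan is to reduce the statement to a monotonicity in the single parameter $\widetilde{l}$ of \eqref{l-revised} and then to treat the two cut-off points by two \emph{different} comparison arguments. By Proposition~\ref{prop:VB-poisson} the two branches of $V_{B}$ in \eqref{V-B} agree at $q_{B}$, and since $h>\widetilde{l}$ we may write $V_{B}(q)=\max\bigl(\mu-R,\,qh+(1-q)\widetilde{l}\bigr)$; hence the obstacle in \eqref{V-Poisson} is $\widetilde{G}(q)=\max\bigl(\mu,\,qh+(1-q)\widetilde{l}\bigr)$, i.e. the irreversible obstacle \eqref{G-linear} with $l$ replaced by $\widetilde{l}$, and Theorem~\ref{thm:regret} records the resulting form of $V$. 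Because $\partial\widetilde{l}/\partial R=-\lambda/(\rho+\lambda)<0$, raising $R$ is the same as lowering $\widetilde{l}$. The key point — and the reason the monotonicity holds despite the cautionary remark following Proposition~\ref{prop:constant-2} — is that in \eqref{HJB-Poisson-integrated} the diffusion coefficient $\tfrac{h-l}{\sigma}$ still contains the \emph{original} $l$, which is held fixed; only the obstacle moves with $R$. Fix $R_{1}\le R_{2}$ and write $\widetilde{l}_{1}\ge\widetilde{l}_{2}$, $\widetilde{G}_{1}\ge\widetilde{G}_{2}$ pointwise on $[0,1]$, $V_{1},V_{2}$, and $(\underline{q}_{1},\overline{q}_{1})$, $(\underline{q}_{2},\overline{q}_{2})$ for the corresponding objects.

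For the left cut-off: since $\widetilde{G}_{1}\ge\widetilde{G}_{2}$ pointwise while the running cost $C_{I}$ and the generator are unchanged, comparing the integrands in \eqref{V-Poisson} for each fixed stopping time gives $V_{1}\ge V_{2}\ge\mu$. By Theorem~\ref{thm:regret}, $\underline{q}_{i}=\max\{q\in[0,1]:V_{i}(q)=\mu\}$, and if $V_{1}(q)=\mu$ then $\mu\le V_{2}(q)\le V_{1}(q)=\mu$, so $V_{2}(q)=\mu$; hence $\underline{q}_{1}\le\underline{q}_{2}$.

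For the right cut-off — the delicate part — the pointwise ordering $V_{1}\ge V_{2}$ is \emph{not} sufficient: it falls outside the scope of Lemma~\ref{lem:comparison} (the two obstacles differ) and, read literally, would point the wrong way. Instead I would de-trend by the moving linear branch of the obstacle: set $f_{i}(q):=V_{i}(q)-\bigl(qh+(1-q)\widetilde{l}_{i}\bigr)$. Since the belief process \eqref{belief-dynamics} is a bounded martingale, applying It\^{o}'s formula to $e^{-\rho t}\bigl(q_{t}h+(1-q_{t})\widetilde{l}_{i}\bigr)$ and substituting into \eqref{V-Poisson} yields
\begin{equation*}
f_{i}(q)=\sup_{\tau\in\mathcal{T}_{Y}}\mathbb{E}\Bigl[-\int_{0}^{\tau}e^{-\rho t}\widetilde{C}_{i}(q_{t})\,dt+e^{-\rho\tau}g_{i}(q_{\tau})\,\Big|\,q_{0}=q\Bigr],
\end{equation*}
with modified cost $\widetilde{C}_{i}(q):=C_{I}+\rho\bigl(qh+(1-q)\widetilde{l}_{i}\bigr)>0$ and modified obstacle $g_{i}(q):=\bigl(\mu-qh-(1-q)\widetilde{l}_{i}\bigr)_{+}$. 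As $\widetilde{l}_{1}\ge\widetilde{l}_{2}$, we have $\widetilde{C}_{1}\ge\widetilde{C}_{2}$ and $g_{1}\le g_{2}$ on $[0,1]$, so comparing integrands termwise in the supremum gives $f_{1}\le f_{2}$. Finally, by Theorem~\ref{thm:regret} one has $f_{i}\equiv0$ on $[\overline{q}_{i},1]$ and $f_{i}>0$ on $[0,\overline{q}_{i})$: on $(\underline{q}_{i},\overline{q}_{i})$ because there $V_{i}>\widetilde{G}_{i}\ge qh+(1-q)\widetilde{l}_{i}$, and on $[0,\underline{q}_{i}]$ because there $f_{i}(q)=\mu-qh-(1-q)\widetilde{l}_{i}$ and $\underline{q}_{i}<\tfrac{\mu-\widetilde{l}_{i}}{h-\widetilde{l}_{i}}$ strictly (otherwise the $\mathcal{C}^{1}$ smooth fit $V_{i}'(\underline{q}_{i})=0$ together with the convexity of $V_{i}$ from Theorem~\ref{thm:regret} and Proposition~\ref{prop:C1_solution} would force $V_{i}<\widetilde{G}_{i}$ just to the right of $\underline{q}_{i}$, a contradiction). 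Hence $\{f_{i}=0\}=[\overline{q}_{i},1]$, and $0\le f_{1}\le f_{2}$ forces $[\overline{q}_{2},1]\subseteq[\overline{q}_{1},1]$, i.e. $\overline{q}_{1}\le\overline{q}_{2}$.

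The main obstacle is the $\overline{q}$ step — choosing the right quantity to compare. The direct comparison of value functions controls only the left cut-off; the correct device is to subtract the linear branch of the obstacle (which itself moves with $\widetilde{l}$) and recognize the remainder as the value of an auxiliary optimal stopping problem whose cost \emph{and} obstacle are both monotone in $\widetilde{l}$ in the direction that makes the comparison work. A minor technical point to dispatch is the strict inequality $\underline{q}_{i}<(\mu-\widetilde{l}_{i})/(h-\widetilde{l}_{i})$, needed so that the zero set of $f_{i}$ is exactly $[\overline{q}_{i},1]$; it follows from the $\mathcal{C}^{1}$ smooth-fit property and the convexity of $V_{i}$ established earlier.
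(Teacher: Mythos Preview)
Your proposal is correct. The reduction to $\widetilde{l}$, and the treatment of the left cut-off via the pointwise ordering $V_1\ge V_2\ge\mu$, coincide with what the paper does. For the right cut-off, however, you take a genuinely different route.

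The paper handles $\overline{q}$ by an \emph{affine rescaling} of the value function: with $\alpha=\frac{h-\widetilde{l}_2}{h-\widetilde{l}_1}$ and $\beta=\frac{\widetilde{l}_2-\widetilde{l}_1}{h-\widetilde{l}_1}h$ (so that $x\mapsto\alpha x+\beta$ fixes $h$ and sends $\widetilde{l}_1$ to $\widetilde{l}_2$), it verifies that $\alpha V_1+\beta$ is a viscosity supersolution of the $V_2$ equation, applies the comparison principle, and then reads off $[\overline{q}_1,1]\subseteq[\overline{q}_2,1]$ directly from the fact that the rescaling maps the linear branch $qh+(1-q)\widetilde{l}_1$ to $qh+(1-q)\widetilde{l}_2$. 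Your approach instead \emph{de-trends}: you subtract the moving linear branch, recognize the remainder $f_i$ as the value of an auxiliary stopping problem, and note that both its running cost and its obstacle move monotonically in $\widetilde{l}$, so a termwise comparison of the integrands gives $f_1\le f_2$ and hence $\{f_2=0\}\subseteq\{f_1=0\}$. Your argument is probabilistic and bypasses the viscosity super/subsolution construction for this step; the paper's argument stays within its viscosity-solution toolkit and is perhaps more in keeping with the surrounding sections. Both exploit the same structural point you highlight: the diffusion coefficient depends on the fixed $l$, not on $\widetilde{l}$.

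One minor simplification: the strict inequality $\underline{q}_i<(\mu-\widetilde{l}_i)/(h-\widetilde{l}_i)$ that you establish is not actually needed. Even if $f_i(\underline{q}_i)=0$, the set $\{f_2=0\}$ contains the connected interval $[\overline{q}_2,1]$, and since $\underline{q}_1<\overline{q}_1$, the inclusion $\{f_2=0\}\subseteq\{f_1=0\}$ still forces $[\overline{q}_2,1]\subseteq[\overline{q}_1,1]$ by looking at the connected component containing $1$.
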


\begin{proof}
     Let $%
R_{2}<R_{1}$ and denote $\widetilde{l}_{1},\widetilde{l}_{2}$ as in  \eqref{l-revised} with $R_1$,$R_2$ respectively. Then, $\widetilde{l}_{1}<\widetilde{l}_{2}$. Let 
\begin{equation*}
V_1^{R_1}(q)=\max \left( \mu ,qh+(1-q)\widetilde{l}_{1}\right) \text{
\ \ and \ \ }V_1^{R_2}(q)=\max \left( \mu ,qh+(1-q)\widetilde{l}_{2}\right) ,
\end{equation*}%
and note that, for $q\in \left[ 0,1\right] ,$  we have 
$$\frac{h-\widetilde{l}_{2}}{h-\widetilde{l}_{1}}V_1^{R_1}(q)+\frac{%
\widetilde{l}_{2}-\widetilde{l}_{1}}{h-\widetilde{l}_{1}}h\,\geq\, V_1^{R_2}(q)\,\geq\, V_1^{R_1}(q).$$

Let $V^{R_1}$ and $V^{R_2}$ be the respective viscosity solutions to 
\begin{align}
\min \left( \rho V^{R_1}(q)-\frac{1}{2}\left( \frac{h-l}{\sigma }\right)
^{2}q^{2}(1-q)^{2}(V^{R_1})^{\prime \prime }(q)+C_{I},V^{R_1}(q)-V_1^{R_1}(q)\right) & =0,  \label{eqn:hjb-cs1} \\
\min \left( \rho V^{R_2}(q)-\frac{1}{2}\left( \frac{h-l}{\sigma }\right)
^{2}q^{2}(1-q)^{2}(V^{R_2})^{\prime \prime }(q)+C_{I},V^{R_2}(q)-V_1^{R_2}(q)\right) & =0. \label{eqn:hjb-cs2}
\end{align}%
Using that $V_1^{R_1}(q)\leq V_1^{R_2}(q)$ and working as in
the proof of Proposition \ref{prop:constant-2}, we deduce that $V^{R_1}$ is a viscosity
subsolution to \eqref{eqn:hjb-cs2}. By the comparison principle for continuous viscosity solutions, we conclude that $%
V^{R_1}\left( q\right) \leq V^{R_2}\left( q\right) $.

Next, we show that the function $\frac{h-\widetilde{l}_{2}}{h-\widetilde{l}%
_{1}}V^{R_1}+\frac{\widetilde{l}_{2}-\widetilde{l}_{1}}{h-\widetilde{l}_{1}}h$
is a viscosity supersolution to \eqref{eqn:hjb-cs2}. For this, we take $%
q_0\in (0,1)$ and a test function $\psi \in \mathcal C^{2}((0,1))$, such that 
\begin{equation*}
\left( \frac{h-\widetilde{l}_{2}}{h-\widetilde{l}_{1}}V^{R_1}+\frac{\widetilde{%
l}_{2}-\widetilde{l}_{1}}{h-\widetilde{l}_{1}}h-\psi \right)
(q_0)=\min_{q\in (0,1)}\left( \frac{h-\widetilde{l}_{2}}{h-\widetilde{l}%
_{1}}V^{R_1}\left( q\right) +\frac{\widetilde{l}_{2}-\widetilde{l}_{1}}{h-%
\widetilde{l}_{1}}h\left( q\right) -\psi \left( q\right) \right) =0.
\end{equation*}%
Then, the test function $\varphi :=\frac{h-\widetilde{l}_{1}}{h-\widetilde{l}%
_{2}}\left( \psi -\frac{\widetilde{l}_{2}-\widetilde{l}_{1}}{h-\widetilde{l}%
_{1}}h\right) $ satisfies $\varphi \in \mathcal{C}^{2}((0,1))$ and 
\begin{equation*}
(V^{R_1}-\varphi )(q_0)=\min_{q\in (0,1)}(V^{R_1}\left( q\right) -\varphi
\left( q\right) )=0.
\end{equation*}%
Since $V^{R_1}$ is the viscosity solution to \eqref{eqn:hjb-cs1}, it is a
viscosity supersolution to \eqref{eqn:hjb-cs1} and, thus, 
\begin{equation*}
\min \left( \rho \varphi (q_0)-\frac{1}{2}\left( \frac{h-l}{\sigma }%
\right) ^{2}q_0^{2}(1-q_0)^{2}\varphi^{\prime \prime
}(q_0)+C_{I},\varphi (q_0)-V_1^{R_1}(q_0)\right) \geq 0.
\end{equation*}%
On the other hand, $\varphi \leq \frac{h-\widetilde{l}_{1}}{h-\widetilde{l}_{2}}\psi $, $\varphi ^{\prime \prime }=\frac{h-%
\widetilde{l}_{1}}{h-\widetilde{l}_{2}}\psi ^{\prime \prime }$, $C_{I}\leq\frac{h-\widetilde{l}_{1}}{h-\widetilde{l}_{2}}C_{I}$, and $$\varphi -V_1^{R_1}=\frac{h-\widetilde{l}_{1}%
}{h-\widetilde{l}_{2}}\psi -\frac{\widetilde{l}_{2}-\widetilde{l}_{1}}{h-%
\widetilde{l}_{2}}h-V_1^{R_1}\leq \frac{h-\widetilde{l}_{1}}{h-%
\widetilde{l}_{2}}(\psi -V_1^{R_2}),$$ and, thus, 
\begin{equation*}
\min \left( \rho \psi (q_0)-\frac{1}{2}\left( \frac{h-l}{\sigma }\right)
^{2}q_0^{2}(1-q_0)^{2}\psi ^{\prime \prime }(q_0)+C_{I},\psi
(q_0)-V_1^{R_2}(q_0)\right) \geq 0.
\end{equation*}%
Therefore, function $\frac{h-\widetilde{l}_{2}}{h-\widetilde{l}_{1}}V^{R_1}+\frac{%
\widetilde{l}_{2}-\widetilde{l}_{1}}{h-\widetilde{l}_{1}}h$ is a viscosity
supersolution to \eqref{eqn:hjb-cs2}. In turn, the comparison principle of the continuous viscosity solutions
yields that it dominates $V^{R_2}.$ 

In summary, we have so far shown that 
\begin{equation*}
\frac{h-\widetilde{l}_{2}}{h-\widetilde{l}_{1}}V^{R_1}(q)+\frac{\widetilde{l}%
_{2}-\widetilde{l}_{1}}{h-\widetilde{l}_{1}}h\geq V^{R_2}(q)\geq V^{R_1}(q),%
\text{ \ \ }q\in \lbrack 0,1].
\end{equation*}%
From Theorem \ref{thm:no-regret}, there exist two pairs of cutoff points $%
\left( \underline{q}_{1},\overline{q}_{1}\right) $ $\ $and $\ \left( 
\underline{q}_{2},\overline{q}_{2}\right) $ such that 
\begin{equation*}
\begin{cases}
V^{R_1}(q)=\mu,  & q\in \lbrack 0,\underline{q}_{1}], \\ 
V^{R_1}(q)>V_1^{R_1}(q), & q\in (\underline{q}_{1},\overline{q}_{1}), \\ 
V^{R_1}(q)=hq+\widetilde{l}_{1}(1-q), & q\in \lbrack \overline{q}_{1},1],\quad 
\end{cases}%
\qquad 
\begin{cases}
V^{R_2}(q)=\mu,  & q\in \lbrack 0,\underline{q}_{2}], \\ 
V^{R_2}(q)>V_1^{R_2}(q), & q\in (\underline{q}_{2},\overline{q}_{2}), \\ 
V^{R_2}(q)=hq+\widetilde{l}_{2}(1-q), & q\in \lbrack \overline{q}_{2},1].%
\end{cases}%
\end{equation*}%
To compare the points $\underline{q}_{1}$ and $\underline{q}_{2}$, we first note that $%
V^{R_2}(q)\geq V^{R_1}(q)\geq \mu .$ For any $q\in \lbrack 0,1]$ such that $%
V^{R_2}(q)=\mu $, we also have $V^{R_1}(q)=\mu $ and, hence, 
\begin{equation*}
\lbrack 0,\underline{q}_{2}]=\{q\in \lbrack 0,1]:V^{R_2}(q)=\mu \}\subseteq
\{q\in \lbrack 0,1]:V^{R_1}(q)=\mu \}=[0,\underline{q}_{1}],
\end{equation*}%
which implies that $\underline{q}_{2}\leq \underline{q}_{1}$. 

Analogously,
to compare $\overline{q}_{1},\overline{q}_{2}$, we observe that 
\begin{equation*}
\frac{h-\widetilde{l}_{2}}{h-\widetilde{l}_{1}}V^{R_1}+\frac{\widetilde{l}_{2}-%
\widetilde{l}_{1}}{h-\widetilde{l}_{1}}h\geq \frac{h-\widetilde{l}_{2}}{h-%
\widetilde{l}_{1}}(hq+\widetilde{l}_{1}(1-q))+\frac{\widetilde{l}_{2}-%
\widetilde{l}_{1}}{h-\widetilde{l}_{1}}h=hq+\widetilde{l}_{2}(1-q).
\end{equation*}%
For any $q\in \lbrack 0,1]$ such that $V^{R_1}(q)=hq+\widetilde{l}_{1}(1-q)$,
we also have $V^{R_2}(q)=hq+\widetilde{l}_{2}(1-q)$ and, therefore, 
\begin{equation*}
\lbrack \overline{q}_{1},1]=\Big\{q\in \lbrack 0,1]:V^{R_1}(q)=hq+\widetilde{l}%
_{1}(1-q)\Big\}
\subseteq \Big\{q\in \lbrack 0,1]:V^{R_2}(q)=hq+\widetilde{l}_{2}(1-q)\Big\}=[%
\overline{q}_{2},1],
\end{equation*}%
which yields $\overline{q}_{2}\leq \overline{q}_{1}$.

In conclusion, for $R_{2}<R_{1}$, or equivalently $\widetilde{l}_{2}>\widetilde{l}_{1}$, we have that $\underline{q}_{2}\leq \underline{q}_{1}$
and $\overline{q}_{2}\leq \overline{q}_{1}$. 
\end{proof}

Proposition \ref{prop:constant:cs} implies that, when the exchange fee $R$
increases, the DM becomes more\textit{\ reluctant} to leave the safe choice
region (since $\underline{q}$ increases) and take the risk to choose product 
$B$ (since $\overline{q}$ increases).

\paragraph{Comparison with the single (irreversible) decision problems.}

As mentioned earlier, $V_1(q)$ has the same structure as $G(q)$ (cf. \eqref{G-linear}), by replacing $l$ by $\widetilde{l}$. Hence the solution of the
extended model with a Poisson-type refined signal may be viewed as a
single, irreversible problem with a modified low value for the unknown
product. Conceptually, however, we are dealing with entirely different models. 

The quantity $\widetilde{l}$ decreases when  $R$ increases,
with the limiting value $\widetilde{l}=l$ achieved when $R=\mu -l$. By
rewriting $\widetilde{l}=-\frac{\rho }{\rho +\lambda }(\mu -R-l)+(\mu -R)$,
we also see that $\widetilde{l}$ increases when $\lambda $ increases.

Proposition \ref{prop:constant-2} suggests that,
asymptotically, the bandwidth of the exploration region $[\underline{q},%
\overline{q}]$ converges to zero as $\lambda \uparrow \infty $ and $%
R\downarrow 0$. This implies that when the Poisson signal arrives fast
enough and when the exchange fee is low, the DM spends less time
acquiring information in the first exploratory period, and  makes
her initial choice comparatively sooner than in the irreversible case.

\subsection{Case 2: Sequential information signal of Gaussian type of higher accuracy}\label{sec:higher accuracy}

The new observation process $\widetilde{Y}$ is modeled as 
\begin{equation}
\label{eq:signal-small-sigma}
d\widetilde{Y}_{t}=\Theta \,dt+\widetilde{\sigma }d\widetilde{W}_{t},
\end{equation}%
with $0<\widetilde{\sigma }\leq \sigma ,$ $\sigma $ being the volatility
coefficient of the signal $Y$ used in the first exploratory period, and $\widetilde{W}_t$ is a standard Brownian motion independent of $W _t$. In
analogy to
\eqref{eq:belief-small-sigma}, the related belief process $\widetilde{q}$ satisfies 
\begin{equation}\label{eq:belief-small-sigma}
d\widetilde{q}_{t}=\frac{h-l}{\widetilde{\sigma }}\widetilde{q}_{t}\left( 1-%
\widetilde{q}_{t}\right) d\widetilde{Z}_{t},\text{ \ }\widetilde{q}_{0}=\widetilde{q}\in %
\left[ 0,1\right],
\end{equation}%
with $\widetilde{Z}$ being a standard Brownian motion with respect to the filtration
$\{\mathcal F_t^{\widetilde{Y}}\}_{t\ge 0}$ on the probability space $%
\left( \Omega ,\mathcal{F},\mathbb{P}%
\right)$.
The corresponding variational inequality for the nested continuation value
function of product $B$ is 
\begin{equation*}
{\min }\left( \rho U^B(\widetilde{q})-\frac{1}{2}\left( \frac{h-l}{\widetilde{\sigma }}\right) ^{2}%
\widetilde{q}^{2}(1-\widetilde{q})^{2}(U^B)^{\prime \prime }(\widetilde{q})-\rho \left( \widetilde{q}h+(1-\widetilde{q}%
)l\right) ,U^B(\widetilde{q})-\left(
\mu -R\right) \right) =0,
\end{equation*}%
with $U^B(0)=\mu -R$ and $U^B(1)=h.$

\begin{proposition}   
\label{thm:small-sigma-V1} The nested continuation
value function for product $B{\ }$is given by 
\begin{equation}
U^B(\widetilde{q})=%
\begin{cases}
\mu -R,\quad  & \widetilde{q}\leq q_B, \\ 
\widetilde{q}h+(1-\widetilde{q})l+{d}_{B}\widetilde{q}^{\frac{1-\widetilde k}{2}}(1-\widetilde{q})^{\frac{1+\widetilde{k}}{2}%
},\quad  & \widetilde{q}>q_B,%
\end{cases}
\label{V-B-Gaussian}
\end{equation}%
where 
\begin{equation*}
q_B=\frac{\left( l-\mu +R\right) \left( \frac{1-\widetilde{k}}{2}\right) }{(h-l)\left( \frac{1+\widetilde k}{2}%
\right) +l-\mu +R},
\end{equation*}%
with 
\begin{equation}
{d}_{B}=\frac{\mu-R-q_Bh-(1-q_B)l}{%
q_B{}^{\frac{1-\widetilde k}{2}}(1-\widetilde{q}%
_{B})^{\frac{1+\widetilde k}{2}}}\text{ \ \ and \ \ }\widetilde{k}%
=\sqrt{1+8\rho \left( \frac{\widetilde{\sigma }}{h-l}\right) ^{2}}.
\label{d(b)-k}
\end{equation}
\end{proposition}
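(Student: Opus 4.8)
The plan is to treat \eqref{V-B-dfn} as a standard optimal stopping problem for the bounded martingale $\widetilde q$ solving \eqref{eq:belief-small-sigma}, to build an explicit candidate solution of the associated variational inequality, and then to identify it with $V_B$ by uniqueness. Arguing exactly as in the viscosity characterization established earlier (It\^o's formula, the Dynamic Programming Principle, and the comparison principle — all of which go through since both the running reward $\rho e^{-\rho t}(\widetilde q_t h+(1-\widetilde q_t)l)$ and the obstacle $\mu-R$ are bounded), $V_B$ is the unique Lipschitz-continuous viscosity solution of the variational inequality displayed just before the statement, with boundary data $V_B(0)=\mu-R$ and $V_B(1)=h$. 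As in Theorem~\ref{thm:no-regret}, the monotone structure — the flow payoff $\rho(\widetilde q h+(1-\widetilde q)l)$ is increasing in $\widetilde q$, the obstacle is constant, and $\widetilde q=0$ is absorbing with $l<\mu-R$ — makes it natural to look for a solution that equals $\mu-R$ on an interval $[0,q_B]$ (return $B$ immediately) and strictly exceeds it on $(q_B,1]$ (keep $B$ and keep learning); the single-threshold structure is confirmed a posteriori once the candidate is shown to satisfy the full variational inequality.

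On the conjectured continuation region the candidate solves the linear ODE $\rho V-\tfrac12(\tfrac{h-l}{\widetilde\sigma})^2\widetilde q^2(1-\widetilde q)^2V''-\rho(\widetilde q h+(1-\widetilde q)l)=0$. The linear function $\widetilde q h+(1-\widetilde q)l$ is a particular solution, and the homogeneous equation coincides, up to replacing $\sigma$ by $\widetilde\sigma$, with the one solved in Section~\ref{sec:constant}; its fundamental solutions are therefore $\widetilde q^{\frac{1-\widetilde k}{2}}(1-\widetilde q)^{\frac{1+\widetilde k}{2}}$ and $\widetilde q^{\frac{1+\widetilde k}{2}}(1-\widetilde q)^{\frac{1-\widetilde k}{2}}$, with $\widetilde k$ obtained from \eqref{eqn:k} by the same substitution. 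Since $\widetilde k>1$, the second fundamental solution blows up as $\widetilde q\uparrow1$, so the boundary condition $V_B(1)=h$ forces its coefficient to vanish, leaving the candidate $\widehat V(\widetilde q)=\widetilde q h+(1-\widetilde q)l+d_B\,\widetilde q^{\frac{1-\widetilde k}{2}}(1-\widetilde q)^{\frac{1+\widetilde k}{2}}$ on $(q_B,1]$. The value-matching condition $\widehat V(q_B)=\mu-R$ yields the stated expression for $d_B$ in terms of $q_B$, and the smooth-fit condition $\widehat V'(q_B)=0$, after substituting $d_B$, multiplying through by $q_B(1-q_B)$, and simplifying, reduces to a \emph{linear} equation in $q_B$ whose solution is precisely $q_B=\dfrac{(l-\mu+R)\frac{1-\widetilde k}{2}}{(h-l)\frac{1+\widetilde k}{2}+l-\mu+R}$.

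It then remains to verify that $\widehat V$, extended by the constant $\mu-R$ on $[0,q_B]$, is a Lipschitz viscosity solution of the variational inequality with the correct boundary data; uniqueness then gives $\widehat V=V_B$. Writing $m:=\tfrac{1-\widetilde k}{2}<0$, the ordering $0<l<\mu-R<\mu<h$ gives $q_B\in(0,1)$: both $-(\mu-R-l)m>0$ and $(h-l)(1-m)-(\mu-R-l)>0$ (using $1-m>1$ and $h-l>\mu-R-l$), and $q_B<1$ reduces to $\mu-R<h$. On $[0,q_B]$ one has $\widehat V\equiv\mu-R$, and since $\widetilde q h+(1-\widetilde q)l$ is increasing with value $q_Bh+(1-q_B)l<\mu-R$ at $q_B$ (this inequality being again equivalent to $\mu-R<h$ after inserting the formula for $q_B$), the required inequality $\rho(\mu-R)-\rho(\widetilde q h+(1-\widetilde q)l)\ge0$ holds. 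On $(q_B,1]$ the ODE holds by construction, so it remains to check $\widehat V\ge\mu-R$: the ODE gives $\widehat V''=\tfrac{2\rho\widetilde\sigma^2}{(h-l)^2\widetilde q^2(1-\widetilde q)^2}\big(\widehat V-\widetilde q h-(1-\widetilde q)l\big)=\tfrac{2\rho\widetilde\sigma^2}{(h-l)^2\widetilde q^2(1-\widetilde q)^2}\,d_B\,\widetilde q^{m}(1-\widetilde q)^{1-m}>0$ once one notes $d_B>0$ (its numerator $\mu-R-q_Bh-(1-q_B)l$ is strictly positive by the strict version of the previous inequality), so $\widehat V$ is strictly convex on $(q_B,1)$ with $\widehat V'(q_B)=0$, hence strictly increasing and $>\mu-R$ there; and $\widehat V(1)=h>\mu-R$. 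Finally $\widehat V$ is $\mathcal C^1$ across $q_B$, smooth on each side, with bounded derivative up to the endpoints, hence Lipschitz on $[0,1]$, and $\widehat V(0)=\mu-R$, $\widehat V(1)=h$; by uniqueness, $\widehat V=V_B$.

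I expect the main obstacle to be twofold: the bookkeeping in the algebraic reduction of the two free-boundary conditions to the closed form for $q_B$ (carefully tracking the exponents $\tfrac{1\mp\widetilde k}{2}$ and the sign of $m$), and justifying the single-threshold ansatz rigorously — i.e., confirming that the candidate really does satisfy the variational inequality on all of $(0,1)$ so that the uniqueness theorem closes the argument. The sign checks ($q_B\in(0,1)$, $d_B>0$, and the two flow/obstacle inequalities) are what make this go through, and each of them ultimately rests on the ordering $l<\mu-R<\mu<h$.
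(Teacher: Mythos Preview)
Your proof is correct and follows essentially the same route as the paper: solve the linear ODE on the conjectured continuation region, discard the fundamental solution $\widetilde q^{\frac{1+\widetilde k}{2}}(1-\widetilde q)^{\frac{1-\widetilde k}{2}}$ via boundedness at $\widetilde q=1$, and determine $q_B$ and $d_B$ from value matching and smooth fit. Your argument is in fact more complete than the paper's, which stops at the smooth-fit system; you additionally verify that the resulting candidate satisfies the full variational inequality on both regions (via the sign checks $q_B\in(0,1)$, $d_B>0$, and the convexity/monotonicity argument on $(q_B,1)$) and invoke uniqueness to identify it with $V_B$.
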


\begin{proof}
We first note that the solution of the ODE (with a slight abuse of notation) 
\begin{equation}
\rho U^B (\widetilde q)-\frac{1}{2}%
\left( \frac{h-l}{\widetilde{\sigma }}\right) ^{2}\widetilde{q}^{2}(1-%
\widetilde{q})^{2}(U^B)^{\prime \prime }(\widetilde{q})-\rho \Big( \widetilde{q}h+(1-\widetilde{q})l\Big) =0
\label{eq:ode:small}
\end{equation}%
can be written as 
\begin{equation*}
U^B(\widetilde{q})=\widetilde{q}h+(1-\widetilde{q})l+d_B v_{1}(\widetilde{%
q})+d_A v_{2}(\widetilde{q}),
\end{equation*}
where $v_{1}(\widetilde{q})=\widetilde q^{\frac{1-\widetilde k}{2}}(1-\widetilde{q}%
)^{\frac{1+\widetilde k}{2}}$ and,  $v_{2}(%
\widetilde{q})=\widetilde{q}^{\frac{1+\widetilde k}{2}}(1-%
\widetilde{q})^{\frac{1-\widetilde k}{2}}$, with
$d_B$ and $d_A$ two free parameters to be determined and $\widetilde{k}$
as in (\ref{d(b)-k}).

Working as in the proof of Proposition \ref{prop:VB-poisson}, we seek $U^B\in 
\mathcal{C}^{1}([0,1])$ and $q_B\in (0,1)$ such that $U^B(\widetilde q)=\mu -R,$  $q\leq \widetilde q_B$, and $U^B(\widetilde{q}%
)$ satisfies ODE \eqref{eq:ode:small} for $\widetilde{q}>q_B$. 

Since $U^B$ is bounded near $\widetilde{q}=1$ we conclude that $d_A=0$, otherwise we would have $\lim_{\widetilde{q}\rightarrow 1}|U^B(\widetilde{%
q})|=+\infty $ since $\lim_{\widetilde{q}\rightarrow 1}v_{2}(%
\widetilde{q})=\infty $. 

Applying the smooth fit principle yields 
\begin{eqnarray*}
 q_Bh+(1-q_B)l+d_B%
q_B{}^{\frac{1-\widetilde k}{2}}(1-{q}%
_{B})^{\frac{1+\widetilde k}{2}}=\mu -R,\,\,
 h-l+d_Bq_B{}^{-\frac{1+\widetilde{k}
}{2}}(1-q_B)^{-\frac{1-\widetilde{k}}{2}}\left( \frac{1-\widetilde k}{2}-{q}%
_{B}\right) =0,  
\end{eqnarray*}%
which, in turn, implies 
\begin{eqnarray*}
q_B =\frac{\left(l-\mu+R\right) \left( \frac{1-\widetilde{k}}{2}\right) }{(h-l)\left( \frac{1+\widetilde k}{2}%
\right) +l-\mu+R} \quad \text{ and \ \ \ }
{d}_{B} =\frac{\mu-R-q_Bh-(1-q_B)l}{%
q_B{}^{\frac{1-\widetilde k}{2}}(1-{q}%
_{B})^{\frac{1+\widetilde k}{2}}}.  \label{eq:oppo:small-sigma-c1}
\end{eqnarray*}
\end{proof}
\vspace{-15pt}
\begin{lemma}\label{lemma:g}
 If the exchange fee $R>0$, there exists a unique $%
q'\in (0,1)$ such that $U^B(q')=\mu $.
\end{lemma}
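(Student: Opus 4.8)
The plan is to use the explicit form of $V_B$ from Proposition~\ref{thm:small-sigma-V1} and split $[0,1]$ at the threshold $q_B$. On $[0,q_B]$ one has $V_B\equiv \mu-R$, which is strictly below $\mu$ precisely because $R>0$; hence no point of $[0,q_B]$ can satisfy $V_B(q')=\mu$, and it suffices to work on the branch $\widetilde q\in(q_B,1]$, where $V_B(\widetilde q)=g(\widetilde q):=\widetilde q h+(1-\widetilde q)l+d_B\,\widetilde q^{(1-\widetilde k)/2}(1-\widetilde q)^{(1+\widetilde k)/2}$ with $q_B\in(0,1)$, $\widetilde k>1$ (since $\rho>0$), and $d_B$ as in \eqref{d(b)-k}.

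For \emph{existence}, I would evaluate $g$ at the two ends of this branch. The first smooth-fit equation in the proof of Proposition~\ref{thm:small-sigma-V1} gives $g(q_B^+)=q_Bh+(1-q_B)l+d_Bq_B^{(1-\widetilde k)/2}(1-q_B)^{(1+\widetilde k)/2}=\mu-R<\mu$, while $g(1)=h>\mu$ because the correction term vanishes at $\widetilde q=1$ (the exponent $(1+\widetilde k)/2$ being positive). Since $V_B$ is continuous (indeed $\mathcal C^1([0,1])$ by Proposition~\ref{thm:small-sigma-V1}), the intermediate value theorem applied to $V_B$ on $[q_B,1]$ yields at least one $q'\in(q_B,1)\subset(0,1)$ with $V_B(q')=\mu$.

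For \emph{uniqueness}, I would show $V_B$ is strictly increasing on $[q_B,1]$. First, $d_B>0$: from \eqref{d(b)-k} its sign equals that of $\mu-R-\big(q_Bh+(1-q_B)l\big)$, and a short computation from the explicit value of $q_B$ shows $q_B<(\mu-R-l)/(h-l)$ — this inequality reduces, after clearing the positive denominator, to $\mu-R<h$, which holds since $\mu<h$ and $R>0$ — so the numerator in $d_B$ is positive. Consequently $V_B(\widetilde q)-\big(\widetilde q h+(1-\widetilde q)l\big)=d_B\,\widetilde q^{(1-\widetilde k)/2}(1-\widetilde q)^{(1+\widetilde k)/2}>0$ on $(q_B,1)$, and substituting this into the ODE $\rho V_B-\tfrac12\big(\tfrac{h-l}{\widetilde\sigma}\big)^2\widetilde q^2(1-\widetilde q)^2V_B''-\rho\big(\widetilde q h+(1-\widetilde q)l\big)=0$ from the proof of Proposition~\ref{thm:small-sigma-V1} gives $V_B''>0$ on $(q_B,1)$, i.e.\ $V_B$ is strictly convex there. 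Combining this with the second smooth-fit condition $V_B'(q_B)=0$ shows $V_B'>0$ on $(q_B,1)$; hence $V_B$ is strictly increasing on $[q_B,1]$, so the root $q'$ produced above is unique, which completes the argument.

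The only mildly delicate point is the sign of $d_B$, equivalently the fact that at the lower boundary $q_B$ the obstacle value $\mu-R$ strictly exceeds the ``hold-forever'' linear value $q_Bh+(1-q_B)l$; all the remaining ingredients (continuity, the endpoint values, convexity via the ODE, and $V_B'(q_B)=0$ via smooth fit) are immediate from the structure already recorded in Proposition~\ref{thm:small-sigma-V1}.
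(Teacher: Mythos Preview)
Your proof is correct, and the existence half matches the paper's argument verbatim (continuity of $V_B$, the endpoint values $V_B(q_B)=\mu-R<\mu$ and $V_B(1)=h>\mu$, intermediate value theorem). For uniqueness the paper also establishes strict convexity of $V_B$ on $(q_B,1)$ but then argues directly by contradiction: if $q_B<q_1<q_2$ both solve $V_B=\mu$, writing $q_1$ as a convex combination of $q_B$ and $q_2$ forces $V_B(q_1)<\mu$. Your route---combining convexity with the smooth-fit condition $V_B'(q_B)=0$ to get strict monotonicity on $[q_B,1]$---is a mild variant that yields the extra information that $V_B$ is increasing, and your explicit verification that $d_B>0$ (via $q_B<(\mu-R-l)/(h-l)$) actually plugs a small gap in the paper, which asserts $V_B''>0$ from the formula $V_B''=\tfrac{d_B(\widetilde k^2-1)}{4\widetilde q^{(3+\widetilde k)/2}(1-\widetilde q)^{(3-\widetilde k)/2}}$ without checking the sign of $d_B$.
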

\begin{proof}
Since $U^B(q_B)=\mu -R<\mu $ and $%
U^B(1)=h>\mu $, the continuity of $U^B$ implies that there exists $%
\widetilde{q}\in (q_B,1)$ such that $U^B(\widetilde{q})=\mu $. Moreover, using the form of $U^B$, we can show that for $\widetilde{q}\in (%
q_B,1)$, we have 
\begin{equation*}
(U^B)^{\prime \prime }(\widetilde{q})=\frac{d_{B}(\widetilde{k}^{2}-1)}{4 \widetilde{q}^{\frac{3+\widetilde{k}}{2}}(1-\widetilde{q})^{\frac{3-\widetilde{k}}{2}}}>0.
\end{equation*}%
Therefore, $U^B(\widetilde{q})$ is convex for $\widetilde{q}\in (%
q_B,1)$. 

Next, we assume that there exist at least two points $q_{1}$
and $q_{2}$ such that $U^B(q_{1})=U^B(q_{2})=\mu $, with $\widetilde{q}%
_{B}<q_{1}<q_{2}<1$. Then, by the convexity of $U^B$, we  have 
\begin{equation*}
U^B(q_{1})\leq \frac{q_{2}-q_{1}}{q_{2}-q_B}U^B(\widetilde{%
q}_{B})+\frac{q_{1}-q_B}{q_{2}-q_B}%
U^B(q_{2})<\mu ,
\end{equation*}%
which, however, contradicts that $U^B(q_{1})=\mu $. Therefore, there must
exist a unique\textit{\ }point $q'$ such that $U^B(q') = \mu$. 
\end{proof}

Given $U^B$ as established in Proposition \ref{thm:small-sigma-V1}, we define the value function $V$ (cf. (\ref{V-general})) with Gaussian-type signals, by
\begin{equation}
V(q)=\sup_{\tau \in \mathcal{T}^{Y}_0}\mathbb{E}\left[ \left. -\int_{0}^{\tau
}e^{-\rho t}C(q_{t})dt+e^{-\rho \tau }V_1(q_{\tau })\,\right\vert
q_{0}=q\right] ,  \label{V-Gau}
\end{equation}%
where $V_1(q)=\max \left( \mu ,\,V_1^B(q)\right)$, 
with $\,V_1^B(q) = U^B(\widetilde{q})$ as in (\ref{V-B-Gaussian}). See Figure \ref%
{fig:g} for a demonstration.
\vspace{-15pt}
\begin{figure}[H]
    \centering
\includegraphics[width=15cm]{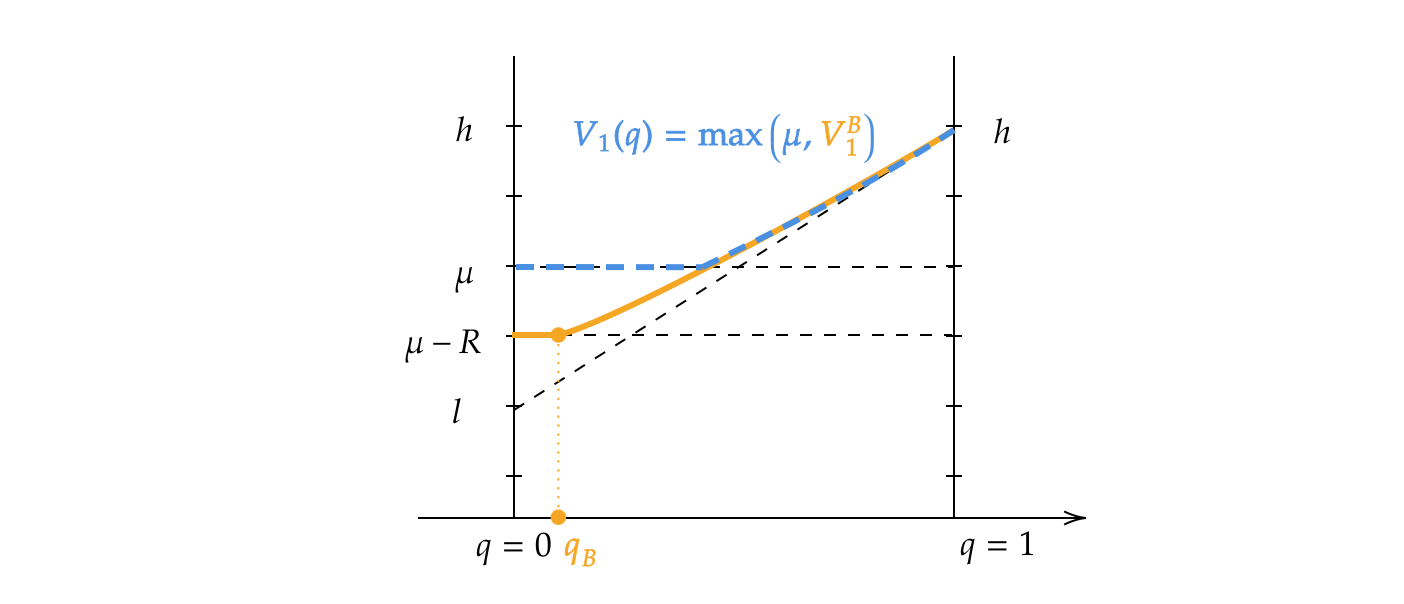}
    \caption{$V_1^B$ defined in \eqref{V-B-Gaussian} (in orange) v.s. $V_1$ defined in \eqref{G-general} (in dashed blue).}
    \label{fig:g}
\end{figure}

\subsubsection{Constant information cost}
We assume that $C(q)=C_{I}>0,$ $q\in \left[ 0,1\right] $. Then, the value
function $V$ satisfies   
\begin{equation*}
{\min }\left( \rho V(q)-\frac{1}{2}\left( \frac{h-l}{\sigma }\right)
^{2}q^{2}(1-q)^{2}V^{\prime \prime }\left( q\right) +C_{I},V(q)-V_1%
(q)\right) =0,\quad \quad V_1(q)=\max \Big( \mu ,V_1^B(q)\Big), 
\end{equation*}%
with $V_1^B (q)= U^B(\widetilde{q})$ given in (\ref{V-B-Gaussian}). Working as in the proof of Theorem \ref{thm:regret} we deduce the following result. The arguments are similar,
albeit more tedious but are omitted for brevity.

\begin{theorem}\label{thm:small:regret}
There exists a unique pair $(\underline{q},%
\overline{q})$, with $0<\underline{q}<\overline{q}<1$, such that the value
function $V(q)\in \mathcal{C}^{1}\left( \left[ 0,1\right] \right) $ and
satisfies 
\begin{equation*}
V(q)=
\begin{cases}
{ \mu},\quad  & q\leq \underline{q}, \\ 
d_{1}q^{\frac{1-k}{2}}(1-q)^{\frac{1+k}{2}}+d_{2}(1-q)^{%
\frac{1-k}{2}}q^{\frac{1+k}{2}}-\frac{C_{I}}{\rho }%
,\quad  & \underline{q}\leq q\leq \overline{q}, \\ 
qh+(1-q)l+d_{B}q^{\frac{1-\widetilde k}{2}}(1-q)^{\frac{1+\widetilde{k}}{2}},\quad  & \overline{q}\leq q,%
\end{cases}%
\end{equation*}%
where 
\begin{equation*}
d_{1}=\frac{\frac{1+k}{2}-\underline{q}}{k(1-\underline{q})^{\frac{1+k}{2}}\underline{q}^{\frac{1-k}{2}}}\left( { \mu} +%
\frac{C_{I}}{\rho }\right) \text{, \ }d_{2}=-\frac{\frac{1-k}{2}-\underline{q} }{k(1-\underline{q})^{\frac{1-k}{2}}%
\underline{q}^{\frac{1+k}{2}}}\left( { \mu}+\frac{C_{I}}{\rho }%
\right),
\end{equation*}%
and%
\begin{equation*}
d_{B}=\frac{\mu -l-R}{\frac{1+\widetilde k}{2}}\left( \frac{%
\left( \frac{1+\widetilde k}{2}\right) (h-\mu +R)}{-\left( \frac{1-\widetilde{k}}{2}\right) (\mu -l-R)}\right) ^{\frac{1-\widetilde{k}}{2}} \,\, \text{ with }\,\, \widetilde{k}=\sqrt{1+8\rho \left( \frac{\widetilde{\sigma}}{h-l}\right) ^{2}}.
\end{equation*}%

\end{theorem}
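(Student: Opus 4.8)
The plan is to mirror the proof of Theorem~\ref{thm:regret}, the only genuinely new ingredient being that the obstacle $\widetilde G$ now has a strictly convex right branch --- inherited from $V_B$ in Proposition~\ref{thm:small-sigma-V1} --- rather than an affine one. First I would record the shape of $\widetilde G(q)=\max(\mu,V_B(q))$. By Lemma~\ref{lemma:g} there is a unique $q'\in(q_B,1)$ with $V_B(q')=\mu$; moreover $V_B\in\mathcal C^{1}([0,1])$, $V_B^{\prime}\equiv 0$ on $[0,q_B]$, and $V_B^{\prime\prime}>0$ on $(q_B,1)$ (all established in the proof of Lemma~\ref{lemma:g}), so $V_B$ is convex and non-decreasing on $[0,1]$ with $V_B(q_B)=\mu-R<\mu$. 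Hence $\widetilde G=\mu$ on $[0,q']$, $\widetilde G=V_B$ on $[q',1]$, and $\widetilde G$ is Lipschitz, convex and non-decreasing, i.e.\ it satisfies Assumption~\ref{ass:CI}(ii). Consequently the general results of Section~\ref{sec:framework} apply to the optimal stopping problem \eqref{V-general} with $C\equiv C_I$.

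Next I would run the arguments of the proof of Theorem~\ref{thm:no-regret} (and Proposition~\ref{prop:C1_solution}), adapted to this $\widetilde G$. The interval argument for $\{V=\mu\}$ goes through verbatim, using $\widetilde G(q_\tau)=\mu$ whenever $q_\tau\le q'$ (the analogue of $q_\tau\le\hat p$). For the right stopping region one uses that $V_B$ is itself a supersolution of the equation in \eqref{OP} on $(q',1)$: since $\widetilde\sigma\le\sigma$ and $V_B^{\prime\prime}>0$ there,
\[
\rho V_B(q)-\frac{1}{2}\left(\frac{h-l}{\sigma}\right)^{2}q^{2}(1-q)^{2}V_B^{\prime\prime}(q)+C_I\;\ge\;\rho\left(qh+(1-q)l\right)+C_I\;>\;0 ,
\]
so that the supersolution constructions of Proposition~\ref{lem:supersolution} give $V=V_B$ on a right neighbourhood of $1$, while an analogue of Proposition~\ref{lem:supersolution} applied at the convex kink of $\widetilde G$ at $q'$ gives $V(q')>\widetilde G(q')$. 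Combining these yields a pair $0<\underline q<\overline q<1$ with $q'\in(\underline q,\overline q)$, $V\in\mathcal C^{1}([0,1])$, $V=\mu$ on $[0,\underline q]$, $V=\widetilde G=V_B$ on $[\overline q,1]$, and $V$ solving $\rho V(q)-\frac{1}{2}(\frac{h-l}{\sigma})^{2}q^{2}(1-q)^{2}V^{\prime\prime}(q)+C_I=0$ on $(\underline q,\overline q)$ together with the smooth-fit conditions $V(\underline q)=\mu$, $V^{\prime}(\underline q)=0$, $V(\overline q)=V_B(\overline q)$, $V^{\prime}(\overline q)=V_B^{\prime}(\overline q)$.

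It remains to identify the constants. The linear ODE on $(\underline q,\overline q)$ has general solution $V(q)=d_1v_1(q)+d_2v_2(q)-C_I/\rho$ with $v_1,v_2,k$ as in \eqref{eqn:ode_general_solution}--\eqref{eqn:k}, by the Euler-equation computation already carried out in the proof of Theorem~\ref{thm:regret}. Imposing $V(\underline q)=\mu$ and $V^{\prime}(\underline q)=0$ gives the $2\times 2$ linear system \eqref{eq:poisson:left1}--\eqref{eq:poisson:left2} with $\mu$ in place of its right-hand side; solving it exactly as in that proof produces the displayed closed forms for $d_1$ and $d_2$ in terms of $\underline q$. Since $V\equiv V_B$ on $[\overline q,1]$, the coefficient $d_B$ is the one from Proposition~\ref{thm:small-sigma-V1}; substituting the explicit value of $q_B$ into $d_B=(\mu-R-q_Bh-(1-q_B)l)/(q_B^{(1-\widetilde k)/2}(1-q_B)^{(1+\widetilde k)/2})$ and simplifying with the identity $\frac{1+\widetilde k}{2}+\frac{1-\widetilde k}{2}=1$ yields the stated expression for $d_B$. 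Finally, the two smooth-fit conditions at $\overline q$ reduce to two transcendental equations for the pair $(\underline q,\overline q)$; a solution exists because $V$ exists, and it is unique because \eqref{OP} admits a unique Lipschitz viscosity solution.

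The main obstacle is the second step: establishing that $\widetilde G$ remains convex and non-decreasing and, above all, that $V_B$ is a supersolution of the HJB equation for $V$ on $(q',1)$. This is exactly where the hypothesis $\widetilde\sigma\le\sigma$ is used, and it is what forces the right stopping region to be a single interval $[\overline q,1]$ on which $V$ coincides with $V_B$ rather than with some other solution of the ODE. The remaining work --- the Euler-equation solution and the algebraic reduction of $d_1,d_2,d_B$ to the compact forms in the statement --- is tedious but mechanical, which is why the excerpt states that these details are omitted.
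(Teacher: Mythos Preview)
Your proposal is correct and takes essentially the same approach as the paper, which simply states ``Working as in the proof of Theorem~\ref{thm:regret} we deduce the following result. The arguments are similar, albeit more tedious but are omitted for brevity.'' In fact you supply more detail than the paper does, and you correctly isolate the one genuinely new point: the hypothesis $\widetilde\sigma\le\sigma$ is exactly what makes $V_B$ a classical supersolution of the outer HJB operator on $(q',1)$, which is what forces the right stopping region to be a single interval on which $V\equiv V_B$ (the affine-obstacle martingale argument of Theorem~\ref{thm:no-regret} no longer applies directly, since $V_B$ is strictly convex there).
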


\subsubsection{Comparison with the solution of the single (irreversible)
decision problem}
Analytically, we can show, using Theorem %
\ref{thm:small-sigma-V1}, or following similar arguments as in Section \ref{sec:precise_information} that as $\widetilde{\sigma }\downarrow 0$, then $q_{B}\downarrow 0$ and $U^B(\widetilde{q}%
)=\widetilde{q}h+(1-\widetilde{q})\left( \mu -R\right) $. Thus, we may view $%
V_1(q)$ as the analogous $G(q)$ function in the irreversible
decision making case but with the lower value of product $B$ replaced by $\mu
-R$. This implies that, if the DM could observe immediately the true value
of product $B$ should she choose it, she would immediately finalize her
decision whether to keep it if $\Theta =h$, or switch to product $A$ at a
cost of $R$, if $\Theta =l$.

Also note that when $\widetilde{\sigma }=0$, the same monotonicity result
with respect to changes in $R$ in Proposition \ref{prop:constant:cs} holds,
i.e., as $R$ increases, both $\underline{q}$ and $\overline{q}$ increase.
This indicates that when the cost of revising the initial decision is
higher, the DM is more likely to choose the well-known product $A$.

Finally, we further conduct a series of numerical experiments when $0<%
\widetilde{\sigma }<\sigma $, as illustrated in Figure \ref%
{fig:small_variance_cs}, to further investigate the impact of the exchange
cost $R$ on the behavior of DM.

\begin{figure}[H]
    \centering
    \includegraphics[width=0.55\textwidth]{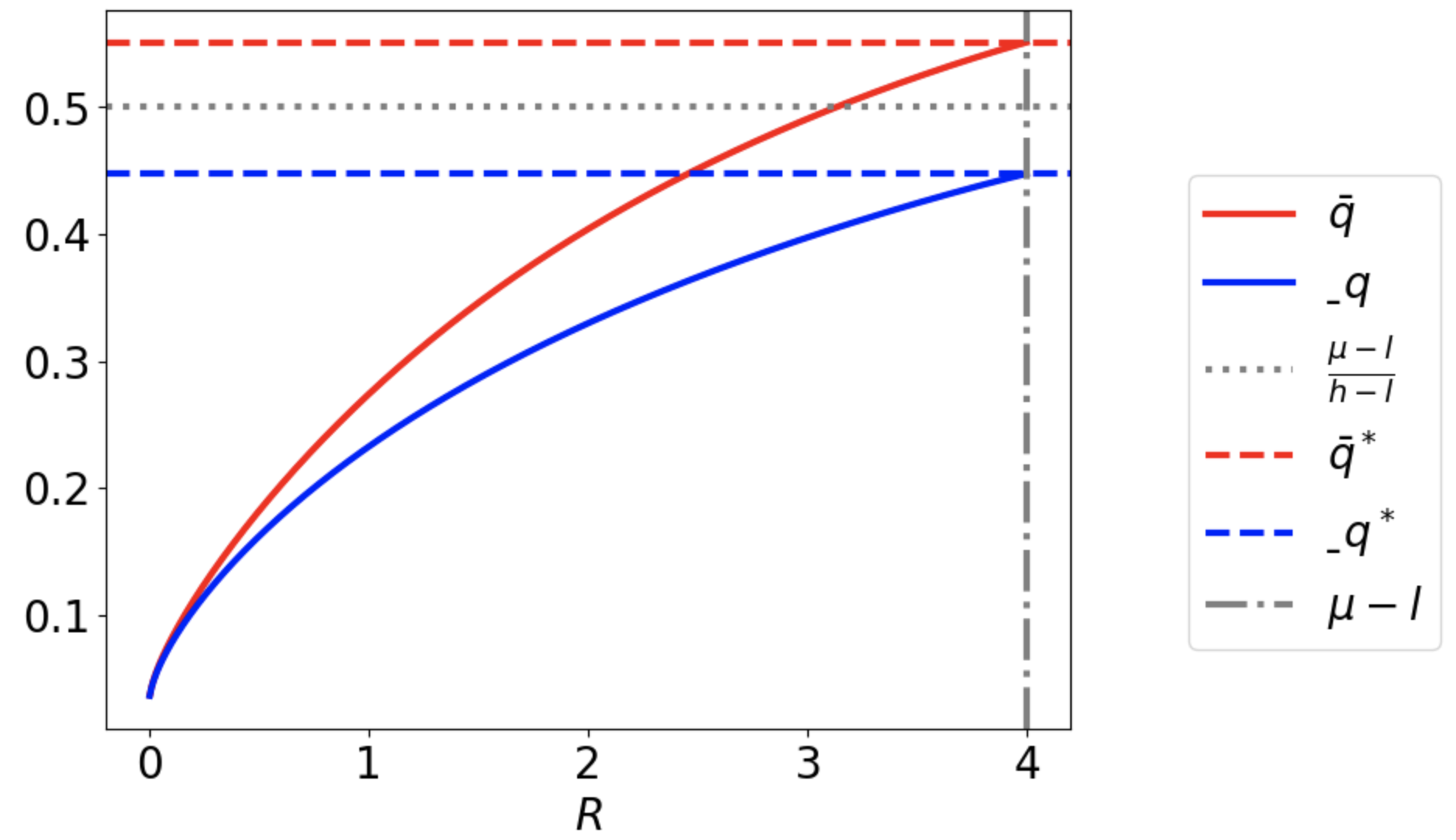}
    \includegraphics[width=0.43\textwidth]{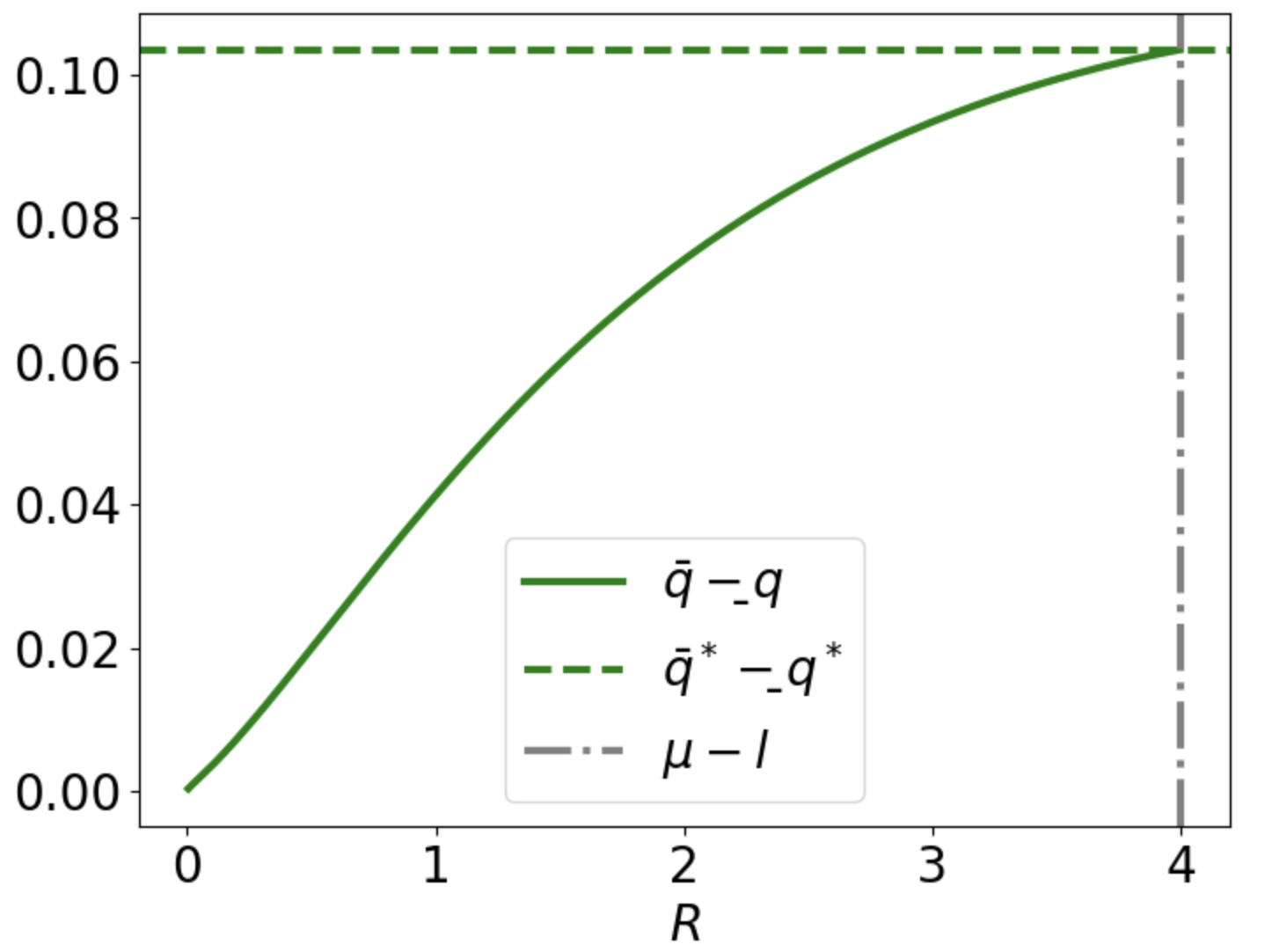}
    \caption{Behavior of $\qis$, $\qiis$, and $\qiis-\qi$. In this experiment, we take $\rho = 1$, $l=1$, $h=9$, $\mu = 5$, $\cI=1$, $\sigma = 5$, $\tsigma = 1$.}
    \label{fig:small_variance_cs}
\end{figure}
For demonstration purposes, we denote the cutoffs of the irreversible counterpart as $\underline{%
q}^{\ast }$ and $\overline{q}^{\ast }$. As the exchange fee $R$ increases to $\mu -l
$, we observe that:

\begin{itemize}
    \item[i)] 
Both points, $\underline{q}$ and $\overline{q},$ increase
monotonically and converge towards $\underline{q}^{\ast }$ and $\overline{q}%
^{\ast },$ respectively. This indicates that when exchanging the product becomes more
costly, the behavior of the DM tends to align more closely with the one
exhibited in the single (irreversible) decision problem.

\item[ii)] The width of the exploration region $\overline{q}-\underline{q}$ also
expands from zero  to  $\overline{q}^{\ast }-\underline{q}%
^{\ast }$, the exploration width of the corresponding single (irreversible) decision problem.\footnote{Note that, when $R=0$, the DM will choose $B$ at time zero, leading to $\overline{q}-\underline{q}=0$.} This shows that
when the exchange cost is zero, the DM tends to make her initial choice
without any exploration. This situation also presents a significant
advantage for selecting product $B$ initially (since both $\underline{q}$ and $%
\overline{q}$ have a small value), as the DM can always switch to product $A$
at no additional cost later on. However, as the cost to return the unknown
product increases, the DM engages in more exploration before reaching her
initial decision in the first exploratory period. 
\end{itemize}

\section{Conclusions and future research directions} \label{sec:conclusion}
We introduced and analyzed a new, integrated decision-making setting that combines sequential decisions, costly information acquisition and distinct informational sources across time regimes. We focused on an E-commerce model in which a decision maker (DM) is presented with the opportunity to choose between a well-known product and a new one, about which she learns through stochastic signals. However, acquiring information about the new product induces information costs that aggregate through time. 

The DM may choose a product but, contrary to the existing models, decision-making continues beyond this time. Specifically, the DM has the optionality to use the product for some time, return it and exchange with the other but at a fee. During this time, she acquires information via a new costly signal, which is more accurate than the initial one.

We formulated the underlying optimal stopping problem which turns out to be a compilation of an ``outer'' and a ``nested'' optimal stopping problem with general payoffs and arbitrary informational costs. For its analysis, we developed a viscosity solution toolkit and analyzed, under rather mild conditions, various cases, performed an extended sensitivity analysis, and studied the limiting behavior of the solution in terms of the various model inputs. We also recovered existing works as special cases and provided some new results for them using viscosity arguments. 

There are five main directions the current work may be extended to. Firstly, we may allow for \textit{multiple information signals} for each decision regime, thus allowing the DM to also choose the type, quality and intensity of the signal he deems most helpful. Secondly, we may allow for \textit{multiple products}, a feature that will induce a more complex product exchange structure that spans to more than two sequential regimes. 
Thirdly, we may consider several DMs who compete for model availability, while sharing common information sources. 
Fourthly, we may develop a more complex fee structure that depends on return duration, product quality, pricing and other factors.
Finally, we may allow for more general information cost functions, beyond the ones that depend only on the belief process. This will naturally lead to higher-dimensional problems, but the nested nature of the involved optimal stopping problems will remain. The authors are currently working on questions related to the above new research directions.

\bibliographystyle{plain}
\bibliography{REF.bib}

\begin{thebibliography}{10}

\bibitem{alizamir2022search}
Saed Alizamir, Francis de~V{\'e}ricourt, and Peng Sun.
\newblock Search under accumulated pressure.
\newblock {\em Operations Research}, 70(3):1393--1409, 2022.

\bibitem{anderson2007optimal}
Brian~DO Anderson and John~B Moore.
\newblock {\em Optimal control: linear quadratic methods}.
\newblock Courier Corporation, 2007.

\bibitem{bandi2023power}
Ajay Bandi, Pydi Venkata Satya~Ramesh Adapa, and Yudu Eswar Vinay Pratap~Kumar Kuchi.
\newblock The power of generative ai: A review of requirements, models, input--output formats, evaluation metrics, and challenges.
\newblock {\em Future Internet}, 15(8):260, 2023.

\bibitem{brynjolfsson2023generative}
Erik Brynjolfsson, Danielle Li, and Lindsey~R Raymond.
\newblock Generative ai at work.
\newblock Technical report, National Bureau of Economic Research, 2023.

\bibitem{campbell2025grab}
Steven Campbell, Georgy Gaitsgori, Richard Groenewald, and Ioannis Karatzas.
\newblock Grab it before it's gone: Testing uncertain rewards under a stochastic deadline.
\newblock {\em arXiv preprint arXiv:2503.06856}, 2025.

\bibitem{campbell2025bayesian}
Steven Campbell and Yuchong Zhang.
\newblock A bayesian sequential soft classification problem for a brownian motion's drift.
\newblock {\em arXiv preprint arXiv:2501.11314}, 2025.

\bibitem{che2019optimal}
Yeon-Koo Che and Konrad Mierendorff.
\newblock Optimal dynamic allocation of attention.
\newblock {\em American Economic Review}, 109(8):2993--3029, 2019.

\bibitem{chen2020optimal}
Mingliu Chen, Peng Sun, and Yongbo Xiao.
\newblock Optimal monitoring schedule in dynamic contracts.
\newblock {\em Operations Research}, 68(5):1285--1314, 2020.

\bibitem{claudy2015consumer}
Marius~C Claudy, Rosanna Garcia, and Aidan O’Driscoll.
\newblock Consumer resistance to innovation—a behavioral reasoning perspective.
\newblock {\em Journal of the Academy of Marketing Science}, 43:528--544, 2015.

\bibitem{crandall1992user}
Michael~G Crandall, Hitoshi Ishii, and Pierre-Louis Lions.
\newblock User’s guide to viscosity solutions of second order partial differential equations.
\newblock {\em Bulletin of the American mathematical society}, 27(1):1--67, 1992.

\bibitem{feder1984acquisition}
Gershon Feder and Roger Slade.
\newblock The acquisition of information and the adoption of new technology.
\newblock {\em American journal of agricultural economics}, 66(3):312--320, 1984.

\bibitem{fleming2012deterministic}
Wendell~H Fleming and Raymond~W Rishel.
\newblock {\em Deterministic and stochastic optimal control}, volume~1.
\newblock Springer Science \& Business Media, 2012.

\bibitem{fleming2006controlled}
Wendell~H Fleming and H~Mete Soner.
\newblock {\em Controlled Markov processes and viscosity solutions}.
\newblock Springer, 2006.

\bibitem{fu2010endogenous}
Qi~Fu and Kaijie Zhu.
\newblock Endogenous information acquisition in supply chain management.
\newblock {\em European Journal of Operational Research}, 201(2):454--462, 2010.

\bibitem{fudenberg2018speed}
Drew Fudenberg, Philipp Strack, and Tomasz Strzalecki.
\newblock Speed, accuracy, and the optimal timing of choices.
\newblock {\em American Economic Review}, 108(12):3651--84, 2018.

\bibitem{fui2023generative}
Fiona Fui-Hoon~Nah, Ruilin Zheng, Jingyuan Cai, Keng Siau, and Langtao Chen.
\newblock Generative ai and chatgpt: Applications, challenges, and ai-human collaboration, 2023.

\bibitem{godoe2012understanding}
Preben Godoe and Trond~Stillaug Johansen.
\newblock Understanding adoption of new technologies: Technology readiness and technology acceptance as an integrated concept.
\newblock {\em Journal of European psychology students}, 3(1):38--52, 2012.

\bibitem{guo2001explicit}
Xin Guo.
\newblock An explicit solution to an optimal stopping problem with regime switching.
\newblock {\em Journal of Applied Probability}, 38(2):464--481, 2001.

\bibitem{han2013social}
Bing Han and Liyan Yang.
\newblock Social networks, information acquisition, and asset prices.
\newblock {\em Management Science}, 59(6):1444--1457, 2013.

\bibitem{he2022endogenization}
Xue~Dong He and Moris~S Strub.
\newblock How endogenization of the reference point affects loss aversion: a study of portfolio selection.
\newblock {\em Operations Research}, 70(6):3035--3053, 2022.

\bibitem{heiman2020marketing}
Amir Heiman, Joel Ferguson, and David Zilberman.
\newblock Marketing and technology adoption and diffusion.
\newblock {\em Applied Economic Perspectives and Policy}, 42(1):21--30, 2020.

\bibitem{holland1998financial}
JB~Holland and P~Doran.
\newblock Financial institutions, private acquisition of corporate information, and fund management.
\newblock {\em The European Journal of Finance}, 4(2):129--155, 1998.

\bibitem{horner2017learning}
Johannes H{\"o}rner and Andrzej Skrzypacz.
\newblock Learning, experimentation and information design.
\newblock {\em Advances in Economics and Econometrics}, 1:63--98, 2017.

\bibitem{hu2021rise}
Danqi Hu, Charles~M Jones, Siyang Li, Valerie Zhang, and Xiaoyan Zhang.
\newblock The rise of reddit: How social media affects retail investors and short-sellers’ roles in price discovery.
\newblock {\em Available at SSRN 3807655}, 2021.

\bibitem{huettner2019consumer}
Frank Huettner, Tamer Boyac{\i}, and Yal{\c{c}}{\i}n Ak{\c{c}}ay.
\newblock Consumer choice under limited attention when alternatives have different information costs.
\newblock {\em Operations Research}, 67(3):671--699, 2019.

\bibitem{kakhbod2021heterogeneous}
Ali Kakhbod, Giacomo Lanzani, and Hao Xing.
\newblock Heterogeneous learning in product markets.
\newblock {\em Available at SSRN 3961223}, 2021.

\bibitem{karatzas_zhao_2001}
I.~Karatzas and X.~Zhao.
\newblock {\em Bayesian Adaptive Portfolio Optimization}, page 632–669.
\newblock Cambridge University Press, 2001.

\bibitem{kaur2020innovation}
Puneet Kaur, Amandeep Dhir, Naveen Singh, Ganesh Sahu, and Mohammad Almotairi.
\newblock An innovation resistance theory perspective on mobile payment solutions.
\newblock {\em Journal of Retailing and Consumer Services}, 55:102059, 2020.

\bibitem{ke2022parallel}
T~Tony Ke, Wenpin Tang, J~Miguel Villas-Boas, and Yuming~Paul Zhang.
\newblock Parallel search for information in continuous time—optimal stopping and geometry of the {PDE}.
\newblock {\em Applied Mathematics \& Optimization}, 85(2):3, 2022.

\bibitem{keller2010strategic}
Godfrey Keller and Sven Rady.
\newblock Strategic experimentation with {P}oisson bandits.
\newblock {\em Theoretical Economics}, 5(2):275--311, 2010.

\bibitem{keppo2008demand}
Jussi Keppo, Giuseppe Moscarini, and Lones Smith.
\newblock The demand for information: More heat than light.
\newblock {\em Journal of Economic Theory}, 138(1):21--50, 2008.

\bibitem{keppo2022risky}
Jussi Keppo, Hong~Ming Tan, and Chao Zhou.
\newblock Risky investments under static and dynamic information acquisition.
\newblock {\em Available at SSRN 3141043}, 2022.

\bibitem{knochenhauer2024continuous}
Christoph Knochenhauer, Alexander Merkel, and Yufei Zhang.
\newblock Continuous-time dynamic decision making with costly information.
\newblock {\em arXiv preprint arXiv:2408.09693}, 2024.

\bibitem{li2014transparency}
Tian Li, Shilu Tong, and Hongtao Zhang.
\newblock Transparency of information acquisition in a supply chain.
\newblock {\em Manufacturing \& Service Operations Management}, 16(3):412--424, 2014.

\bibitem{li2021developers}
Tianshi Li, Elizabeth Louie, Laura Dabbish, and Jason~I Hong.
\newblock How developers talk about personal data and what it means for user privacy: A case study of a developer forum on reddit.
\newblock {\em Proceedings of the ACM on Human-Computer Interaction}, 4(CSCW3):1--28, 2021.

\bibitem{lian2020pointwise}
Yuanyuan Lian, Lihe Wang, and Kai Zhang.
\newblock Pointwise regularity for fully nonlinear elliptic equations in general forms.
\newblock {\em arXiv preprint arXiv:2012.00324}, 2020.

\bibitem{liang2017optimal}
Annie Liang, Xiaosheng Mu, and Vasilis Syrgkanis.
\newblock Optimal learning from multiple information sources.
\newblock {\em arXiv preprint arXiv:1703.06367}, 2017.

\bibitem{mariani2020exploring}
Marcello~M Mariani and Samuel~Fosso Wamba.
\newblock Exploring how consumer goods companies innovate in the digital age: The role of big data analytics companies.
\newblock {\em Journal of Business Research}, 121:338--352, 2020.

\bibitem{mayskaya2022dynamic}
Tatiana Mayskaya.
\newblock Dynamic choice of information sources.
\newblock {\em California Institute of Technology Social Science Working Paper, ICEF Working Paper WP9/2019/05}, 2022.

\bibitem{miao2023dynamic}
Jianjun Miao and Hao Xing.
\newblock Dynamic discrete choice under rational inattention.
\newblock {\em Economic Theory}, pages 1--56, 2023.

\bibitem{mitter1996filtering}
Sanjoy~K Mitter.
\newblock Filtering and stochastic control: A historical perspective.
\newblock {\em IEEE Control Systems Magazine}, 16(3):67--76, 1996.

\bibitem{morris1976kalman}
J~Morris.
\newblock The {K}alman filter: A robust estimator for some classes of linear quadratic problems.
\newblock {\em IEEE Transactions on Information Theory}, 22(5):526--534, 1976.

\bibitem{moscarini2001optimal}
Giuseppe Moscarini and Lones Smith.
\newblock The optimal level of experimentation.
\newblock {\em Econometrica}, 69(6):1629--1644, 2001.

\bibitem{pei2020survey}
Jian Pei.
\newblock A survey on data pricing: from economics to data science.
\newblock {\em IEEE Transactions on knowledge and Data Engineering}, 34(10):4586--4608, 2020.

\bibitem{peskir2006optimal}
Goran Peskir and Albert Shiryaev.
\newblock {\em Optimal stopping and free-boundary problems}.
\newblock Springer, 2006.

\bibitem{pham2009continuous}
Huy{\^e}n Pham.
\newblock {\em Continuous-time stochastic control and optimization with financial applications}, volume~61.
\newblock Springer Science \& Business Media, 2009.

\bibitem{reikvam1998viscosity}
Kristin Reikvam.
\newblock Viscosity solutions of optimal stopping problems.
\newblock {\em Stochastics and Stochastic Reports}, 62(3-4):285--301, 1998.

\bibitem{reisinger2022markov}
Christoph Reisinger and Jonathan Tam.
\newblock Markov decision processes with observation costs: framework and computation with a penalty scheme.
\newblock {\em arXiv preprint arXiv:2201.07908}, 2022.

\bibitem{shiryaev1973statistical}
Albert~Nikolaevich Sirjaev.
\newblock {\em Statistical sequential analysis: Optimal stopping rules}, volume~38.
\newblock American Mathematical Soc., 1973.

\bibitem{sorenson1976overview}
Harold~W Sorenson.
\newblock An overview of filtering and stochastic control in dynamic systems.
\newblock {\em Control and dynamic systems}, 12:1--61, 1976.

\bibitem{stenbacka1994strategic}
Rune Stenbacka and Mihkel~M Tombak.
\newblock Strategic timing of adoption of new technologies under uncertainty.
\newblock {\em International Journal of Industrial Organization}, 12(3):387--411, 1994.

\bibitem{stengel1994optimal}
Robert~F Stengel.
\newblock {\em Optimal control and estimation}.
\newblock Courier Corporation, 1994.

\bibitem{tang2021exploratory}
Wenpin Tang, Yuming~Paul Zhang, and Xun~Yu Zhou.
\newblock Exploratory {HJB} equations and their convergence.
\newblock {\em SIAM Journal on Control and Optimization}, 60(6):3191--3216, 2022.

\bibitem{usai2021unveiling}
A~Usai, F~Fiano, A~Messeni Petruzzelli, P~Paoloni, M~Farina Briamonte, and B~Orlando.
\newblock Unveiling the impact of the adoption of digital technologies on firms’ innovation performance.
\newblock {\em Journal of Business Research}, 133:327--336, 2021.

\bibitem{van1976optimal}
Pierre Van~Moerbeke.
\newblock On optimal stopping and free boundary problems.
\newblock {\em Archive for Rational Mechanics and Analysis}, 60(2):101--148, 1976.

\bibitem{wald1947foundations}
Abraham Wald.
\newblock Foundations of a general theory of sequential decision functions.
\newblock {\em Econometrica, Journal of the Econometric Society}, pages 279--313, 1947.

\bibitem{yan2013home}
Yan Yan, Qi~Li, Heping Li, Xuejun Zhang, and Lei Wang.
\newblock A home-based health information acquisition system.
\newblock {\em Health Information Science and Systems}, 1:1--10, 2013.

\bibitem{zhong2017optimal}
Weijie Zhong.
\newblock Optimal dynamic information acquisition.
\newblock {\em Econometrica}, 90(4):1537--1582, 2022.

\end{thebibliography}
\newpage
\appendix
\section*{Electronic Companion}
\label{sec:appendix}

\begin{proof}[Proof of Proposition \ref{prop:constant}]
We establish
monotonicity in the volatility parameter $\sigma $. For $\sigma _{2}>\sigma _{1}$, let $V^{\sigma_{1}}$, $V^{\sigma_{2}} $ be viscosity solutions to 
\begin{align}
\min \left( \rho V^{\sigma_{1}}(q)-\frac{1}{2}\left( \frac{h-l}{\sigma _{1}}\right)
^{2}q^{2}(1-q)^{2}(V^{\sigma_{1}})^{\prime \prime }\left( q\right)
+C(q),V^{\sigma_{1}}(q)-G(q)\right) & =0,  \label{eqn:hjb-sigma1} \\
\min \left( \rho V^{\sigma_{2}}(q)-\frac{1}{2}\left( \frac{h-l}{\sigma _{2}}\right)
^{2}q^{2}(1-q)^{2}(V^{\sigma_{2}})^{\prime \prime }\left( q\right)
+C(q),V^{\sigma_{2}}(q)-G(q)\right) & =0,\label{eqn:hjb-sigma2}
\end{align}%
with $V^{\sigma_{1}}\left( 0\right) =V^{\sigma_{2}}\left( 0\right) =G(0)$ and $V^{\sigma_{1}}\left(
1\right) =V^{\sigma_{2}}\left( 1\right) =G(1)$. We show that $V^{\sigma_{2}}$ is a viscosity
subsolution to \eqref{eqn:hjb-sigma1}. To this end, let $q_{0}\in (0,1)$ and consider a
test function $\varphi \in \mathcal{C}^{2}((0,1))$ such that 
\begin{equation*}
(V^{\sigma_{2}}-\varphi )(q_{0})=\max_{q\in \left( 0,1\right) }(V^{\sigma_{2}}-\varphi )\left(
q\right) =0.
\end{equation*}%
Since $V^{\sigma_{2}}$ is a viscosity subsolution to \eqref{eqn:hjb-sigma2}, $\varphi 
$ must satisfy 
\begin{equation*}
\min \left( \rho \varphi (q_{0})-\frac{1}{2}\left( \frac{h-l}{\sigma _{2}}%
\right) ^{2}q_{0}^{2}(1-q_{0})^{2}\varphi ^{\prime \prime
}(q_{0})+C(q_{0}),\varphi (q_{0})-G(q_{0})\right) \leq 0.
\end{equation*}%
Therefore, it is either the case that $\varphi (q_{0})-G(q_{0})\leq 0$ or 
\begin{equation*}
\varphi (q_{0})-G(q_{0})>0\text{ \ and \ }\rho \varphi (q_{0})-\frac{1}{2}%
\left( \frac{h-l}{\sigma _{2}}\right) ^{2}q_{0}^{2}(1-q_{0})^{2}\varphi
^{\prime \prime }\left( q_{0}\right) +C(q_{0})\leq 0.
\end{equation*}%
The latter inequality implies that $\varphi ^{\prime \prime }(q_{0})>0$ and,
consequently, 
\begin{equation*}
\rho \varphi (q_{0})-\frac{1}{2}\left( \frac{h-l}{\sigma _{1}}\right)
^{2}q_{0}^{2}(1-q_{0})^{2}\varphi ^{\prime \prime }\left( q_{0}\right)
+C(q_{0})<\rho \varphi (q_{0})-\frac{1}{2}\left( \frac{h-l}{\sigma _{2}}%
\right) ^{2}q_{0}^{2}(1-q_{0})^{2}\varphi ^{\prime \prime }\left(
q_{0}\right) +C(q_{0})\leq 0.
\end{equation*}%
Combining the above inequalities and using Lemma \ref{lem:comparison}, we easily
conclude. The rest of the proof follows.
\end{proof}

\begin{proof}[Proof of Proposition \ref{prop:constant:mu}.]
For $\mu
_{2}>\mu _{1}$, let 
\begin{equation*}
G_{1}(q):=\max \Big( \mu _{1},qh+(1-q)l\Big) \text{ \ \ and \ \ }%
G_{2}(q)=:\max \Big( \mu _{2},qh+(1-q)l\Big) .
\end{equation*}%
Then, $G_{2}\left( q\right) \geq G_{1}\left( q\right) \geq G_{2}\left(
q\right) -\mu _{2}+\mu _{1}$. Let $V^{\mu _{1}}$, $V^{\mu _{2}}$ be viscosity
solutions to 
\begin{align}
\min \left( \rho V^{\mu _{1}}(q)-\frac{1}{2}\left( \frac{h-l}{\sigma }\right)
^{2}q^{2}(1-q)^{2}(V^{\mu _{1}})^{\prime \prime }\left( q\right) +C\left( q\right)
,V^{\mu _{1}}(q)-G_{1}(q)\right) & =0,  \label{eqn:hjb-mu1} \\
\min \left( \rho V^{\mu _{2}}(q)-\frac{1}{2}\left( \frac{h-l}{\sigma }\right)
^{2}q^{2}(1-q)^{2}(V^{\mu _{2}})^{\prime \prime }\left( q\right) +C\left( q\right)
,V^{\mu _{2}}(q)-G_{2}(q)\right) & =0. \label{eqn:hjb-mu2}
\end{align}%
Let $q_{0}\in (0,1)$ and $\varphi \in \mathcal{C}^{2}((0,1))$ be such that 
\begin{equation*}
(V^{\mu _{1}}-\varphi )(q_{0})=\max_{q\in \left( 0,1\right) }(V^{\mu _{1}}-\varphi )\left(
q\right) =0.
\end{equation*}%
Since $V^{\mu _{1}}$ is the viscosity solution to \eqref{eqn:hjb-mu1} we have 
\textbf{\ } 
\begin{equation*}
\min \left( \rho \varphi (q_{0})-\frac{1}{2}\left( \frac{h-l}{\sigma }%
\right) ^{2}q_{0}^{2}(1-q_{0})^{2}\varphi ^{\prime \prime }\left(
q_{0}\right) +C(q_{0}),\varphi (q_{0})-G_{1}(q_{0})\right) \leq 0,
\end{equation*}%
and, thus, using that $G_{1}(q)\leq G_{2}(q)$, we also have%
\begin{equation*}
\min \left( \rho \varphi (q_{0})-\frac{1}{2}\left( \frac{h-l}{\sigma }%
\right) ^{2}q_{0}^{2}(1-q_{0})^{2}\varphi ^{\prime \prime }\left(
q_{0}\right) +C(q_{0}),\varphi (q_{0})-G_{2}(q_{0})\right) \leq 0,
\end{equation*}
which implies that $V^{\mu _{1}}$ is a viscosity subsolution to \eqref{eqn:hjb-mu1}%
. Then, by comparison, $V^{\mu _{1}}\leq V^{\mu _{2}}$.

Next, we show that the function $V^{\mu _{2}}-\mu _{2}+\mu _{1}$ is a viscosity
subsolution to \eqref{eqn:hjb-mu1}. For this, we consider a test function $\psi
\in \mathcal{C}^{2}((0,1))$ such that 
\begin{equation*}
(V^{\mu _{2}}-\mu _{2}+\mu _{1}-\psi )(q_{0}):=\max_{q\in \left( 0,1\right)
}(V^{\mu _{2}}-\mu _{2}+\mu _{1}-\psi )\left( q\right) =0.
\end{equation*}%
Then, $\varphi :=\psi +\mu _{2}-\mu _{1}$ satisfies $\varphi \in \mathcal{C}%
^{2}((0,1))$ and 
$(V^{\mu _{2}}-\varphi )(q_{0})=\max_{q\in \left( 0,1\right) }(V^{\mu _{2}}-\varphi
)(q)=0$.
Therefore, 
\begin{equation*}
\min \left( \rho \varphi (q_{0})-\frac{1}{2}\left( \frac{h-l}{\sigma }%
\right) ^{2}q_{0}^{2}(1-q_{0})^{2}\varphi ^{\prime \prime }\left(
q_{0}\right) +C(q_{0}),\varphi (q_{0})-G_{2}(q_{0})\right) \leq 0.
\end{equation*}%
Since $\varphi \geq \psi $, $\varphi ^{\prime \prime }=\psi ^{\prime \prime
} $, and $\varphi -G_{2}=\psi +\mu _{2}-\mu _{1}-G_{2}\geq \psi -G_{1}$, we
deduce that 
\begin{equation*}
\min \left( \rho \psi (q_{0})-\frac{1}{2}\left( \frac{h-l}{\sigma }\right)
^{2}q_{0}^{2}(1-q_{0})^{2}\psi ^{\prime \prime }\left( q_{0}\right)
+C(q_{0}),\psi (q_{0})-G_{1}(q_{0})\right) \leq 0,
\end{equation*}
and, thus, $V^{\mu _{2}}-\mu _{2}+\mu _{1}$ is a viscosity subsolution to %
\eqref{eqn:hjb-mu1}. By comparison, $V^{\mu _{2}}-\mu _{2}+\mu _{1}\leq
V^{\mu _{1}}$.

So far we have shown that 
\begin{equation*}
V^{\mu _{2}}(q)-\mu _{2}+\mu _{1}\leq V^{\mu _{1}}(q)\leq V^{\mu _{2}}(q),\qquad q\in
\lbrack 0,1].
\end{equation*}%
From Theorem \ref{thm:no-regret} we know that there exist two
pairs $\left( \underline{q}_{1},\overline{q}_{1}\right) $ and $\left( 
\underline{q}_{2},\overline{q}_{2}\right) $ such that 
\begin{equation*}
\begin{cases}
V^{\mu _{1}}(q)=\mu _{1}, & 0\leq q\leq \underline{q}_{1} \\ 
V^{\mu _{1}}(q)>G_{1}(q), & \underline{q}_{1}<q<\overline{q}_{1} \\ 
V^{\mu _{1}}(q)=qh+(1-q)l, & \overline{q}_{1}\leq q\leq 1,\quad%
\end{cases}%
\qquad 
\begin{cases}
V^{\mu _{2}}(q)=\mu _{2}, & 0\leq q\leq \underline{q}_{2} \\ 
V^{\mu _{2}}(q)>G_{2}(q), & \underline{q}_{2}<q<\overline{q}_{2} \\ 
V^{\mu _{2}}(q)=qh+(1-q)l, & \overline{q}_{2}\leq q\leq 1.\quad%
\end{cases}%
\end{equation*}%
To compare $\overline{q}_{1}$ and $\overline{q}_{2}$, notice that 
$V^{\mu _{2}}(q)\geq V^{\mu _{1}}(q)\geq qh+(1-q)l$.
For any $q\in \lbrack 0,1]$ such that $V^{\mu _{2}}(q)=qh+(1-q)l$, we also have $%
V^{\mu _{1}}(q)=qh+(1-q)l$. Therefore, 
\begin{equation*}
\lbrack \overline{q}_{2},1]=\Big\{q\in \lbrack
0,1]:V^{\mu _{2}}(q)=qh+(1-q)l\Big\}\subseteq \Big\{q\in \lbrack 0,1]:V^{\mu _{1}}(q)=qh+(1-q)l\Big\}=[%
\overline{q}_{1},1],
\end{equation*}%
which yields $\overline{q}_{2}\geq \overline{q}_{1}$. To compare $\underline{%
q}_{1}$ and $\underline{q}_{2}$, notice that inequality $V^{\mu _{2}}(q)-\mu
_{2}+\mu _{1}\leq V^{\mu _{1}}(q)$ yields 
\begin{equation*}
0\leq V^{\mu _{2}}(q)-\mu _{2}\leq V^{\mu _{1}}(q)-\mu _{1}.
\end{equation*}%
For any $q\in \lbrack 0,1]$ such that $V^{\mu _{1}}(q)=\mu _{1}$, we also have $%
V^{\mu _{2}}(q)=\mu _{2}$. Therefore, 
\begin{equation*}
\lbrack 0,\underline{q}_{1}]=\{q\in \lbrack 0,1]:V^{\mu _{1}}(q)=\mu
_{1}\}\subseteq \{q\in \lbrack 0,1]:V^{\mu _{2}}(q)=\mu _{2}\}=[0,\underline{q}%
_{2}],
\end{equation*}%
which yields $\underline{q}_{2}\geq \underline{q}_{1}$. In conclusion, for $%
\mu _{2}>\mu _{1}$, we have $\underline{q}_{2}\geq \underline{q}_{1}$ and $%
\overline{q}_{2}\geq \overline{q}_{1}$.
\end{proof}

\begin{proof}[Proof of Proposition \ref{prop:constant:limit}]
    We only
analyze the case $\sigma \uparrow \infty $ as the other two follow
similarly. To this end, we recall that $V$ is the viscosity solution to (%
\ref{OP}), rewritten for convenience, 
\begin{equation*}
\min \left( \rho V(q)-\frac{1}{2}\left( \frac{h-l}{\sigma }\right)
^{2}q^{2}(1-q)^{2}V^{\prime \prime }\left( q\right) +C(q),V(q)-G(q)\right)
=0,
\end{equation*}%
with $G(q)=\max \left( \mu ,qh+\left( 1-q\right) l\right) $. We now construct a
suitable $\mathcal{C}^{1}((0,1))$ supersolution, introducing 
\begin{eqnarray*}
U(q):=
\begin{cases}
\mu,  & 0\leq q\leq r \\ 
\mu +M(q-r)^{2}, & r\leq q\leq p \\ 
qh+(1-q)l, & p\leq q\leq 1,%
\end{cases}%
\end{eqnarray*}
for some $r\in (0,\hat{p})$, $p\in (\hat{p},1)$ and $M>0$. To have $U\in 
\mathcal{C}^{1}(0,1)$, we need 
\begin{eqnarray}
&&U(p) =\mu +M(p-r)^{2}=ph+(1-p)l,  \label{eqn:solving-r-p-1} \\
&&U^{\prime }(p) =2M(p-r)=h-l.\label{eqn:solving-r-p-2}
\end{eqnarray}%
By construction, $U\left( q\right) =G\left( q\right) $ on $%
[0,r]\cup \lbrack p,1]$. Therefore, it suffices to verify that 
\begin{equation*}
\rho U(q)-\frac{1}{2}M\left( \frac{h-l}{\sigma }\right)
^{2}q^{2}(1-q)^{2}+C(q)>0,\text{ \ }q\in \lbrack r,p],
\end{equation*}%
which follows for large $\sigma .$\textbf{\ }In turn, we use\textbf{\ }Lemma %
\ref{lem:comparison} to deduce that $r\leq \underline{q}\leq \overline{q}%
\leq p$.

We finish the proof by showing that the points $r$ and $p$ can be
arbitrarily close to $\hat{p}$ (cf. \eqref{eqn:k}) if we choose $M$ sufficiently large. Indeed,
applying \eqref{eqn:solving-r-p-2} to \eqref{eqn:solving-r-p-1} gives 
\begin{equation*}
\mu +\frac{(h-l)^{2}}{4M}=ph+(1-p)l=\mu +(h-l)(p-\hat{p}),
\end{equation*}%
which yields $p=\hat{p}+\dfrac{h-l}{4M}.$ Using %
\eqref{eqn:solving-r-p-2}, we deduce that $r=\hat{p}-\dfrac{h-l}{4M}$. Hence 
$p\rightarrow \hat{p}$ and $r\rightarrow \hat{p},$ as $M\rightarrow \infty $%
. 
\end{proof}

\begin{proof}[Proof of Proposition \ref{prop:constant-2}.]
We first
prove \ref{prop-2:i}. Note that $\hat{p}=\frac{\mu -l}{h-l}%
\downarrow 0$ when $l\uparrow \mu $. In this case, $\underline{q}\rightarrow
0$ as $\hat{p}\rightarrow 0$.
Next, we construct a suitable convex supersolution $U\in \mathcal{C}([0,1])$
to (\ref{OP}). For this, let 
\begin{equation*}
U(q):=%
\begin{cases}
\mu, & q=0, \\ 
qh+(1-q)l+M(p-q)^{2}, & 0<q\leq p, \\ 
qh+(1-q)l, & p<q\leq 1,%
\end{cases}%
\end{equation*}%
for some $M>0$ and $p\in (\hat{p},1)$ to be determined in the sequel.

Firstly, note that continuity at $q=0$ requires $M$ and $p$ to satisfy $%
l+p^{2}M=\mu$. Thus, 
\begin{equation*}
p=\sqrt{\frac{\mu -l}{M}},
\end{equation*}%
where $M$ will be chosen independently of $l$.

Next, we verify that $U$ is convex. Since $U^{\prime \prime }(q)=2M>0$, $%
q\in (0,p),$ and $U$ is affine on $[p,1]$, it suffices to compute the
left derivative of $U$ at $p$. We have 
\begin{equation*}
U_{-}^{\prime }(p)=(h-l)+2M(p-p)=h-l.
\end{equation*}%
Therefore, $U$ is convex in $[0,1]$ and piecewise smooth. Note also that $U' (0) = h - l - 2 M p > h - \mu - 2 M p$, which is positive for $p$ sufficiently small as $l$ gets sufficiently close to $\mu$. Therefore, $U$ is strictly increasing in $[0, 1]$.

We now verify that $U$ is a supersolution. Notice that $U\left( q\right)
=G\left( q\right) ,$ for $q=0$ and $q\in \lbrack p,1]$. By monotonicity, $U(q)\ge U(0)=\mu$. From the definition, we also have that $U(q)\ge qh+(1-q)l$ and, thus, $U \ge G$ in $[0, 1]$. To show that $U$ is
a supersolution, it suffices to establish that, for $q\in (0,p)$, it holds
that 
\begin{equation*}
f(q):=\rho U(q)-\frac{1}{2}\left( \frac{h-l}{\sigma }\right)
^{2}q^{2}(1-q)^{2} U''(q)+C(q)\geq 0.
\end{equation*}%
We have that 
\begin{equation*}
f(q)\geq \rho \mu +C_1-M\frac{h^{2}}{16\sigma ^{2}},
\end{equation*}%
since $U(q)\geq\mu $ and $q^{2}(1-q)^{2}\leq \frac{1}{16},$ for $q\in
\lbrack 0,1]$, and where we also need that $C (q) \ge C _1$ (cf. Assumption \ref{ass:CI}).

By choosing $M=\frac{16(\rho
\mu +C_1)\sigma ^{2}}{h^{2}}$, we have $f(q)\geq 0$. Therefore,
choosing also $p=\sqrt{\frac{\mu -l}{M}}$, we have that $U$ is a supersolution and that $U(q)>G(q),$ $%
q\in (0,p)$. Then, by the comparison principle for continuous viscosity solutions, we obtain that $V(q)\leq
U(q),$ $q\in (0,p)$ and, hence, $p\geq \overline{q}$. As $l\rightarrow \mu $%
, we have $\underline{q}\rightarrow 0$ and $\overline{q}\rightarrow 0$, since
${\hat{p}}\rightarrow 0$ and $p\rightarrow 0$.

\bigskip

To show \ref{prop-2:ii}, we construct a convex subsolution $%
\overline{U}\in \mathcal{C}([0,1])$ with $\overline{U}=\max \left( \mu
,U\right) $ for a suitable function $U.$ To this end, for some positive
constants $m,M>0,r\in \left( 0,\frac{1}{2}\right] $ and $p\in \left( \frac{1%
}{2},1\right) $ to be determined, we define 
\begin{equation*}
U(q):=\int_{q}^{1}\Phi (t)dt+qh+(1-q)l,
\end{equation*}%
with
\begin{equation*}
U^{\prime \prime }(q)=\varphi (q)=%
\begin{cases}
M, & 0\leq q\leq r, \\ 
m, & r<q\leq p, \\ 
0, & p<q\leq 1,%
\end{cases}%
\quad \Phi (q)=\int_{q}^{1}\varphi (t)dt=%
\begin{cases}
m(p-r)+M(r-q), & 0\leq q\leq r, \\ 
m(p-q), & r<q\leq p, \\ 
0, & p<q\leq 1.%
\end{cases}%
\end{equation*}%
For the reader's convenience, we graph function $U$ below.
\begin{figure}[ht]
     \centering     \includegraphics[width=15cm]{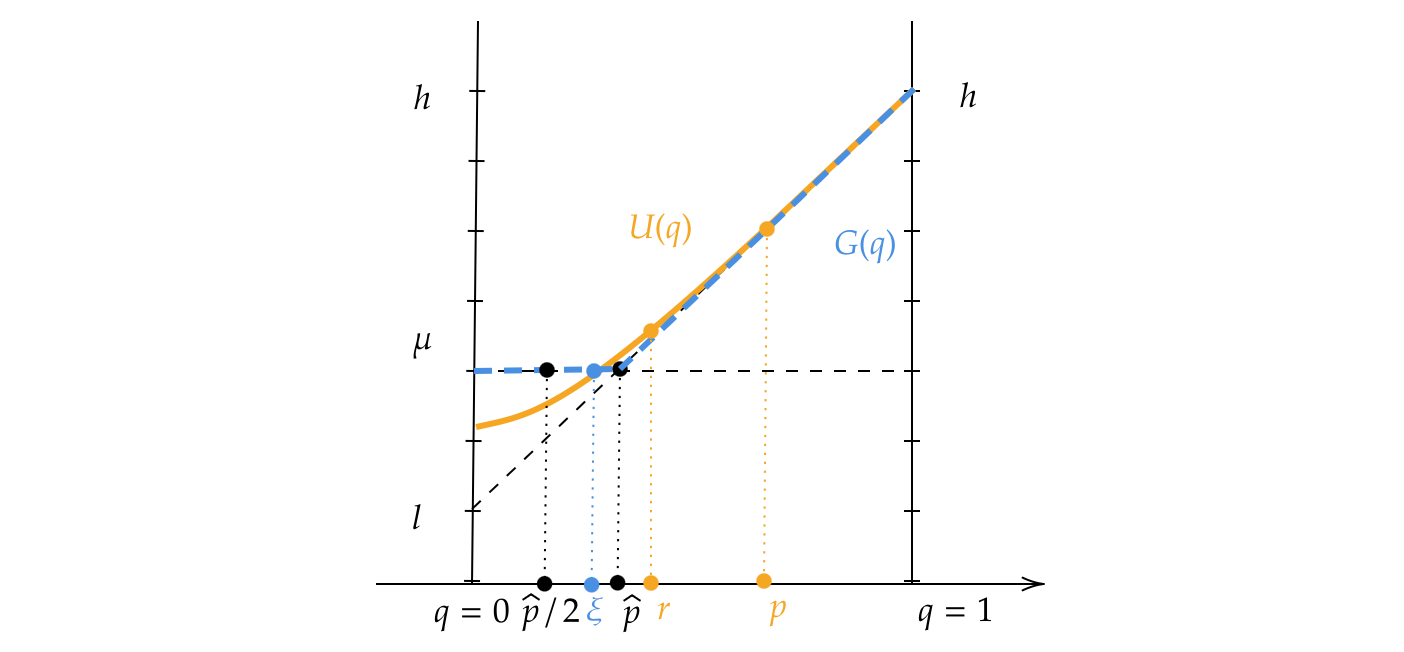}
     \caption{Illustration of $U(q)$ (orange solid line) and $G(q) = \max\big(\mu,q h +  (1-q) l\big)$ (blue dotted line).}
     \label{fig:h-sub}
 \end{figure}

We choose $m,M,r$ independently of $h$. We claim that by a proper choice of $%
m,M,p,r$, $U$ will satisfy the following properties. %
Firstly, $U$ is convex since $U^{\prime }(q)=-\Phi (q)+h-l$ is increasing in 
$q$. Moreover, $\Phi $ is Lipschitz, and $U$ is $\mathcal{C}([0,1])$ since $%
\varphi $ is bounded by $\max \left( m,M\right) $. We, also, have
\begin{equation*}
U^{\prime }(0)=-\Phi (0)+h-l=-m(p-r)-Mr+h-l>-M+h-l,
\end{equation*}%
where $M$ (as mentioned above) will be chosen independently of $h$. We also have $U^{\prime }(0)>0,$
for $h$ is sufficiently large.

Next, we determine $\xi $ at which $U(\xi )=\mu $. By direct calculations, and choosing $m=\frac{1}{2}(\mu -l)$ and $Mr^{2}=\frac{1}{2}(\mu -l)$, we have
\begin{align}
U\left( \frac{\hat{p}}{2}\right) & =\int_{\frac{\hat{p}}{2}}^{1}\Phi
(t)dt+h\frac{\hat{p}}{2}+l\left( 1-\frac{\hat{p}}{2}\right) \\
& =\int_{\frac{\hat{p}}{2}}^{1}\Phi (t)dt+h\hat{p}+l\left( 1-\hat{p}\right) -\frac{\hat{p}}{2}(h-l) \\
& \leq \int_{0}^{1}\Phi (t)dt+\mu -\frac{1}{2}(\mu -l) \\
& =mr(p-r)+\frac{M}{2}r^{2}+\frac{m}{2}(p-r)^{2}+\mu -\frac{1}{2}(\mu -l) \\
& <\frac{1}{2}m+\frac{1}{2}Mr^{2}+\mu -\frac{1}{2}(\mu -l)<\mu.
\end{align}%
On the other hand, since $U(\hat{p})>\hat{p}h+(1-\hat{p})l=\mu $ and $U$ is monotonically
increasing, there exists a unique $\xi \in (\frac{\hat{p}}{2},\hat{p})$
such that $U(\xi )=\mu $. 
By the monotonicity of $U$, we also have that $U(0)<U(\xi )=\mu $. Moreover, $$%
U(q)\leq U(0)+q(h-l)\leq \mu +q(h-l),$$ since $U^{\prime }(q)=-\Phi
(q)+h-l\leq h-l$, $q\in \lbrack 0,1]$.

We are now ready to show that $\overline{U}=\max \left( \mu ,U\right) $ is
indeed a viscosity subsolution. To this end, notice that $\overline{U}(q)=G(q)$, $q\in
\lbrack 0,\xi ]\cup \lbrack p,1]$. To show that $\overline{U}$ is a
subsolution, it suffices to show that, for $q\in (\xi ,p)$, we have 
\begin{equation*}
\rho U(q)-\frac{1}{2}\left( \frac{h-l}{\sigma }\right)
^{2}q^{2}(1-q)^{2}U_{-}^{\prime \prime }\left( q\right) +C(q)<0.
\end{equation*}%

To this end, for each $q\in (\xi ,r]\subset (\frac{\hat{p}}{2},r]$, since $%
U_{-}^{\prime \prime }(q)=M$ for $q<r\leq \frac{1}{2}$ and $U(q)\leq \mu
+(h-l)q$, we have 
\begin{align*}
\rho U(q)-\frac{1}{2}\left( \frac{h-l}{\sigma }\right)
^{2}q^{2}(1-q)^{2}U_{-}^{\prime \prime }\left( q\right) +C(q)& \leq \rho \mu
+C_2+\rho (h-l)q-\frac{M}{8\sigma ^{2}}{(h-l)^{2}q^{2}}  \notag \\
& \leq \rho \mu +C_2+\frac{4\rho ^{2}\sigma ^{2}}{M}-\frac{M}{16\sigma
^{2}}{(h-l)^{2}q^{2}} \\
& \leq \rho \mu +C_2+\frac{4\rho ^{2}\sigma ^{2}}{M}-\frac{M}{16\sigma
^{2}}{(h-l)^{2}\left( \frac{\hat{p}}{2}\right) ^{2}} \\
& =\rho \mu +C_2+\frac{4\rho ^{2}\sigma ^{2}}{M}-\frac{M}{64\sigma ^{2}}%
(\mu -l)^{2},
\end{align*}%
where we need that $C (q) \le C _2$ (cf. Assumption \ref{ass:CI}).

Next, we choose $M$ sufficiently
large such that the last quantity above is negative. For instance, it suffices to take $M=\max \left( \frac{128\sigma ^{2}(\rho \mu +C_2)}{(\mu
-l)^{2}},\frac{32\rho \sigma ^{2}}{\mu -l},2(\mu -l)\right) $. 
Having fixed such $M$, we choose $r = \sqrt{\frac{\mu-l}{2M}}$ which ensures that $r \le \tfrac12$.
We note that the
choices of $m$, $M$, $r$ are all independent\textbf{\ }of the value of $h$.

\textbf{\ }For $q\in (r,p)$, since $U^{\prime \prime }(q)=m$ and $%
U(q)\leq U(1)=h$, by choosing 
\begin{equation*}
p:=1-\sqrt{\frac{2\sigma ^{2}(\rho h+C_1)}{m(h-l)^{2}r^{2}}},
\end{equation*}%
we have 
\begin{equation*}
\rho U(q)-\frac{1}{2}\left( \frac{h-l}{\sigma }\right)
^{2}q^{2}(1-q)^{2}U^{\prime \prime }\left( q\right) +C(q)<\rho h-\frac{1}{2}%
\left( \frac{h-l}{\sigma }\right) ^{2}mr^{2}(1-p)^{2}+C_2=0.
\end{equation*}%
Note that $p\rightarrow 1$ as $h\rightarrow \infty .$

In conclusion, by choosing $m=\frac{1}{2}(\mu -l)$, $M=\max \left( \frac{%
128\sigma ^{2}(\rho \mu +C_2)}{(\mu -l)^{2}},\frac{32\rho \sigma ^{2}}{%
\mu -l},2(\mu -l)\right) $,
$r=\sqrt{\frac{\mu -l}{2M}}$, and $p=1-\sqrt{\frac{2\sigma ^{2}(\rho h+%
C_1)}{m(h-l)^{2}r^{2}}}$, we establish that $\overline{U}$ is a viscosity
subsolution with $\overline{U}(q)>G(q),$ $q\in (\xi ,p)$. By the comparison principle for continuous viscosity solutions, we
deduce that $V(q)\geq \overline{U}(q)>G(q),$ for $q\in (\xi ,p),$ and hence $%
\underline{q}\leq \xi <p\leq \overline{q}$. As $h\rightarrow \infty $,
because $\xi <\hat{p}\rightarrow 0$ and $p\rightarrow 1$, we obtain that $%
\underline{q}\rightarrow 0$ and $\overline{q}\rightarrow 1$. 
\end{proof}

\end{document}